\pdfoutput=1
\documentclass[10pt]{article} 

\textwidth=125mm
\textheight=185mm
\parindent=8mm
\evensidemargin=0pt
\oddsidemargin=0pt
\frenchspacing

\usepackage{amsfonts}
\usepackage{amsthm}
\usepackage{amsmath}
\usepackage{amssymb}

\usepackage{stackengine}
\usepackage{enumerate}
\usepackage{mathtools}
\usepackage{tikz}
\usepackage{tikz-cd}
\usepackage[all]{xy}
\usepackage{graphicx}
\usepackage{adjustbox}
\usepackage{extpfeil}
\usepackage{comment}
\usepackage{mathabx}
\usepackage{float}
\usepackage[labelformat=empty]{caption}
\usepackage[toc]{appendix}
\usepackage{resizegather}
\usepackage{dsfont}
\usetikzlibrary{matrix}
\usepackage[normalem]{ulem}

\usepackage{authblk}

\usepackage[activate={true, nocompatibility}, final, tracking=true, kerning=true, spacing=true, factor=1100, stretch=10, shrink=10]{microtype}
\microtypecontext{spacing=nonfrench}

\usepackage{hyperref}
\hypersetup{pdfborder=0 0 0} 
\usepackage{cleveref}


\theoremstyle{plain}
\newtheorem{thm}{Theorem}[section]
\newtheorem{coroll}[thm]{Corollary}
\newtheorem{defn}[thm]{Definition}
\newtheorem{lemma}[thm]{Lemma}

\newtheorem{prop}[thm]{Proposition}

\newtheorem{remark}[thm]{Remark}

\newtheorem{introthm}{Theorem}
\newtheorem{notn}[thm]{Notation}


\newcommand\hollowslash{\setbox0=\hbox{/}\def\holwd{3pt}%
  \stackengine{-.3pt}{/}{\rlap{\kern-1pt\rule{\holwd}{.4pt}}}{O}{r}{F}{F}{S}%
  \kern\dimexpr\holwd-\wd0-.2pt\relax%
  \stackengine{-.4pt}{/}{\llap{\rule{\holwd}{.4pt}\kern-1pt}}{U}{l}{F}{F}{S}%
}

\tikzset{
  symbol/.style={
    draw=none,
    every to/.append style={
      edge node={node [sloped, allow upside down, auto=false]{$#1$}}}
  }
}


\DeclareMathOperator{\Sym}{Sym}

\newcommand{\fg}{\mathfrak{g}}
\newcommand{\ft}{\mathfrak{t}}
\newcommand{\fc}{\mathfrak{c}}
\newcommand{\fb}{\mathfrak{b}}
\newcommand{\fC}{\mathfrak{C}}


\newcommand{\cO}{\mathcal{O}}

\newcommand{\cD}{\mathcal{D}}

\newcommand{\cM}{\mathcal{M}}

\newcommand{\cH}{\mathcal{H}}

\newcommand{\cL}{\mathcal{L}}

\newcommand{\bC}{\mathbb{C}}
\newcommand{\bG}{\mathbb{G}}
\newcommand{\bF}{\mathbb{F}}

\newcommand{\bZ}{\mathbb{Z}}



\newcommand{\on}{\operatorname}

\newcommand{\Spec}{\on{Spec}}

\newcommand{\Bun}{ \on{Bun} } 

\newcommand{\GL}{\on{GL}}

\usepackage{color}

\pagestyle{plain}
\newcommand{\stHig}{\cM_{Dol}}
\newcommand{\schHig}{\mathbb{M}_{Dol}}
\newcommand{\stdR}{\cM_{dR}}
\newcommand{\schdR}{\mathbb{M}_{dR}}
\newcommand{\stP}{\mathcal{P}}
\newcommand{\schP}{\mathbb{P}}
\newcommand{\stsp}{\cH}
\newcommand{\schsp}{\mathbb{H}}
\newcommand{\hbas}{A}
\newcommand{\wt}{\widetilde}
\newcommand{\oql}{\overline{\mathbb{Q}}_{\ell}}


\newcommand{\pcs}{ \,^{\mathfrak p}\!{\mathcal H}   }

\newcommand{\IC}{\mathcal{I}\mathcal{C}}

\newcommand{\git}{\mathbin{
  \mathchoice{/\mkern-6mu/}
    {/\mkern-6mu/}
    {/\mkern-5mu/}
    {/\mkern-5mu/}}}


\begin{document}
\title{Semistable Non Abelian Hodge theorem in positive characteristic}

\author[1]{Andres Fernandez Herrero}
\author[2]{Siqing Zhang}

\affil[1]{Department of Mathematics, University of Pennsylvania, 209 South 33rd Street, Philadelphia, PA 19104, USA \\ \texttt{andresfh@sas.upenn.edu}}

\affil[2]{Department of Mathematics, Yale University, 219 Prospect St, New Haven, CT 06511, USA \\ \texttt{siqing.zhang@yale.edu}}

\date{} 
\maketitle

\begin{abstract}
In this paper, we show that for any reductive group $G$ the moduli space of semistable $G$-Higgs bundles on a curve in characteristic $p$ is a twisted form of the moduli space of semistable flat $G$-connections. 
As a consequence, we show that the Decomposition Theorem for the Hitchin morphism for $G$-Higgs bundles has the same shape as that for the de Rham-Hitchin morphism for flat $G$-connections. 
\end{abstract}

MSC classification:
		14D20,	14D22, 14D23, 	14H70  

 \section{Introduction}
When it comes to semistability, there is a discrepancy between the Non Abelian Hodge Theorems (NAHTs) in characteristics 0 and $p$ established so far.
Indeed, fix a reductive group $G$ and a smooth projective connected curve $C$ with genus $g(C)\geq 2$ over an algebraic closed field $k$. 
When $k=\mathbb{C}$, the NAHT, proven by Simpson in \cite{simpson-repnII}, establishes a diffeomorphism between the moduli space $\schHig$ of semistable Higgs $G$-bundles on $C$ and the moduli space of semistable flat $G$-connections $\schdR.$
When $char(k)=p>0,$ the NAHT established by Chen-Zhu in \cite{chen-zhu} states that the moduli stack $\stdR(C)$ of flat $G$-connections on $C$ is a twisted version $\stsp\times^{\stP}\stHig(C')$ of the moduli stack $\stHig(C')$ of Higgs $G$-bundles on the Frobenius twist $C'$, where $\stP$ is the Picard stack of symmetries of the Hitchin fibration and $\stsp$ is a torsor under $\stP.$

The Chen-Zhu isomorphism $\stsp\times^{\stP}\stHig(C')\xrightarrow{\sim} \stdR(C)$ does not preserve semistability. Indeed, one can already see this in the case of $G=GL_2$, where an object in $\stsp$ corresponds to a splitting of the Bezrukavnikov-Mirkovic-Rumynin Azumaya algebra \cite{BMR08} on a degree 2 spectral curve over $C'$.
If the splitting is not very good in the sense of \cite[\S3.3]{dCGZ}, then the isomorphism between the de Rham-Hitchin and Dolbeault-Hitchin fibers does not preserve semistability, see \cite[Remark 4.10, last paragraph on p.1376]{dCGZ}, where it is shown that a non-very-good splitting can send a degree-$d$ Higgs bundle to a degree $pd$ connection while sends a degree-$d'$ Higgs sub-bundle to a large degree sub-connection so that the resulting connection is not semistable.

The first result in this paper is the restoration of semistability when we replace the Picard stack $\stP$ by its neutral component $\stP^o$, and replace the torsor $\stsp$ by a corresponding smaller piece $\stsp^o$, which is a $\stsp^o$-torsor.

\begin{introthm}[=\Cref{thm: ss main}] \label{thm: main thm intro} 
Fix a degree $d \in \pi_1(G)$. Chen-Zhu's NAHT isomorphism restricts to an isomorphism between semistable moduli stacks
\[\stsp^o\times^{\stP^o}\stHig^{ss}(C',d)\xrightarrow{\sim}\stdR^{ss}(C,pd).\]
When $p> 2h(G)-2$, where $h(G)$ is the Coxeter number of $G$, every term above admits a quasi-projective adequate moduli space in the sense of Alper \cite{alper_adequate}, and we obtain a schematic version of the NAHT isomorphism.
\end{introthm}

As a consequence of \Cref{thm: main thm intro}, we can relate the Decomposition Theorems obtained by pushing forward the intersection complex $\IC$ under the Hitchin morphism $h_{Dol}: \schHig(C', d)\to A(G, \omega_{C'})$ and the de Rham-Hitchin morphism $h_{dR}: \schdR(C,dp)\to A(G, \omega_{C'})$. 

  \begin{introthm}[= \Cref{thm: dt}] \label{thm: intro thm dt} Suppose that $p>2h(G)-2$. Fix a degree $d \in \pi_1(G)$ and a prime $\ell$ distinct from $p$. Then,
     \begin{enumerate}
            \item There is a distinguished isomorphism $h_{Dol,*}\IC\cong h_{dR,*}\IC$ in $D^b_c(A(G, \omega_{C'}),\oql)$.
         \item There are canonical isomorphisms of cohomology and perverse cohomology sheaves:
         \[\mathcal{H}^i(h_{Dol,*}\oql)\cong \mathcal{H}^i(h_{dR,*}\oql), \; \; \pcs^*(h_{Dol,*}\oql)\cong \pcs^*(h_{dR,*}\oql).\]
     \end{enumerate}
 \end{introthm}

The proofs of the above results are very different from the $GL_n$-case as in \cite{dCGZ}, which relies on explicit Riemann-Roch calculations. 
Such calculations are impossible for principal bundles. 
Worse still, in contrast to the construction of moduli spaces as in the authors' previous paper \cite{hererro-zhang}, where we essentially reduce to the $GL_n$-case, 
no such reduction seems viable for the proof of the semistable NAHT.
Some new machinery, e.g. the theory of $\Theta$-semistability as in \cite{hl-instability, heinloth-hilbertmumford}, proves to be indispensible in our approach in this paper.
Furthermore, the proof that $\stsp^o$ is a $\stP^o$-torsor involves new results on the torsion components of $\stsp$ which are not present in $GL_n$-case. 
Finally, to show the quasi-projectivity of the good moduli space of $\stP^o,$ we give an algebraic proof of the stability of the Kostant section, which seems to be new in the literature.

 \textbf{Acknowledgements.}
 We thank Mark de Cataldo, Roberto Friguelli, Mirko Mauri, Sasha Petrov, Xiao Wang, and Daxin Xu, for useful discussions and comments. This material is based upon work supported by the National Science Foundation under Grant No. DMS-1926686.

 \subsection{Notation}\label{1stnot}
 Let $k$ be an algebraically closed field of characteristic $p>0.$
Fix a smooth connected projective curve $C$ over $k.$
We denote by $\omega_C$ the canonical line bundle on $C.$
    We assume throughout this paper that the genus $g(C)$ of $C$ satisfies $g(C) \geq 2$.

 Let $G$ be a reductive group over $k.$ We fix the choices of a maximal torus $T \subset G$ and a Borel subgroup $B \supset T$.
 We denote by $\fg,\ft,\fb$ the corresponding Lie algebras, which we often view as affine vector space schemes. We write $Z_G \subset G$ for the center of the group $G$.
 We use $\fc$ to denote the GIT quotient scheme $\fg\git G$ by the adjoint action. We denote by $W$ be the Weyl group of $G$, and we write $\Phi$ for the set of roots of $G$ with respect to $T$. 

     We assume throughout this paper that $p\nmid |W|.$
 Let $h(G)$ be the Coxeter number of $G$ as defined in \cite[\S5.1]{serre2005complete}.
 In some places we also need the assumption that $p\geq h(G)$, which is stronger than $p\nmid |W|$ except in cases $A_{p-1}$ and $D_{2=p}$.
In the places where we need the existence of certain moduli spaces constructed in \cite{hererro-zhang}, we often require $p>2h(G)-2$, which is called as property Low Height in \cite{hererro-zhang}.
 
We use $F$ to denote the absolute Frobenius morphism on $\bF_p$-schemes. Given any $k$-scheme $X,$ we set $X':=X\times_{k,F}k$ to be the Frobenius twist of $X.$ We write $Fr_{X/k}:X\to X'$ for the relative Frobenius over $k$, and we omit parts of the subscript when it is clear from the context. 
Given any other $k$-scheme $S,$ we often denote $X_S:=X\times_k S$.

In this paper, we often use twisted products of stacks under the action of a Picard stack.
See \cite[\S A.1]{chzh17} for the notion of Picard stacks.
Let $\mathcal{P}$ be a Picard stack over a site $\mathcal{S}$.
Let $\mathcal{H}$ be a stack over $\mathcal{S}$ with a bifunctor $a:\mathcal{P}\times\mathcal{H}\to \mathcal{H}$.
We say that $a$ defines an action of $\mathcal{P}$ on $\mathcal{H}$ if the condition (i) in \cite[\S A.5]{chzh17}.
We say that $a$ makes $\mathcal{H}$ a $\mathcal{P}$-torsor if the conditions (i)-(iii) in \cite[\S A.5]{chzh17} are satisfied.
Given two stacks $\mathcal{H}$ and $\mathcal{M}$ with a $\mathcal{P}$-action, we denote by 
$   \mathcal{H}\times^{\mathcal{P}}\mathcal{M}$ the 2-categorical quotient of the anti-diagonal action of $\mathcal{P}$ on $\mathcal{H}\times\mathcal{M}$ as in \cite[p. 419, Compl\'ement]{ngo2006fibration}.
By \cite[Lemma 4.7]{ngo2006fibration}, we have that, when $\mathcal{H}$ is a $\mathcal{P}$-torsor, $\mathcal{H}\times^{\mathcal{P}}\mathcal{M}$ is equivalent to a 1-stack, which we call the twisted product of $\mathcal{H}$ and $\mathcal{M}$.

\section{Very good \texorpdfstring{$G$}{G}-splittings}
 In \S\ref{subsec: review chzh}, we review Chen and Zhu's results in \cite{chen-zhu}.
 In \S\ref{sec: conn}, we show that the torsion primes of $\pi_0(\stP)$ divide $|\pi_0(Z_G)|$ and $|W|.$
  In \S\ref{sec: vgsp}, we define the open substack $\stsp^o \subset \stsp$ and study the sheaf of connected components of $\stsp$. In \S\ref{subsection: torsor} we show that $\stsp^o$ is a torsor under $\stP^o.$

 \subsection{Review of Chen-Zhu's Non Abelian Hodge Isomorphism}\label{subsec: review chzh}

\begin{notn}
    The natural $\bG_m$-action on $\fg$ descends to $[\fg/G]$ and $\fc$,  so that we can twist them by $\bG_m$-torsors over $C.$
For any line bundle $L$ on $C$, we denote by $L^{\times}:=\mathrm{Spec}_{\cO_C}(\bigoplus_{n\in\bZ}L^{\otimes n})$ the associated $\bG_m$-torsor, and we denote the corresponding twists $(-)\times^{\bG_m}L^{\times}$
by $[\fg/G]_L, \;\;\fc_L, \;\;\fg_L$, etc., which naturally live over the curve $C$.
\end{notn}

\noindent \underline{\textit{Dolbeault moduli space.}} Let $L$ be a line bundle on $C$. The Dolbeault stack $\stHig(C,G,L)$ is the stack of $C$-sections of $[\fg/G]_L \to C$.
The Hitchin base $\hbas(C,G,L)$ is the affine space parametrizing $C$-sections of $\fc_L \to C$, see \cite[\S 5.2]{dalakov2017lectures}. 
The Hitchin morphism $h:\stHig(C,G,L)\to \hbas(C,G,L)$ is the morphism of stacks induced by the natural $\bG_m$-equivariant morphism $\chi: [\fg/G]\to \fc.$
 We also write $\chi:\fg\to \fc$ for the good quotient morphism. We may drop the decorations such as $G$ and $L$ when they are clear from the context.

\noindent \underline{\textit{Centralizers.}} Let $\kappa:\fc\to [\fg/G]$ be the Kostant section.
Let $I\to [\fg/G]$ be the inertia group stack.
We also use the same notation $I \to \mathfrak{g}$ to denote its pullback via $\fg\to [\fg/G]$.
Let $J:=\kappa^*I$ be the regular centralizer.
It is a smooth commutative group scheme over $\fc$ \cite[Lem. 2.1.1]{ngo-lemme-fondamental}.
There is a natural morphism $a: \chi^*J\to I$ of group schemes on $[\fg/G]$ as in \cite[Lem. 2.1.1]{ngo-lemme-fondamental}. 
Since $J$ and $I$ are $\bG_m$-equivariant over $\fc$ and $[\fg/G]$ respectively, we can twist them by the $\bG_m$-torsor $L^{\times}$ and obtain group schemes $J_{L}$ and $I_{L}$ over $\fc_L$ and $[\fg/G]_L$ respectively. 
 As explained in \cite[\S2.2.3]{ngo-lemme-fondamental}, given any line bundle $L$ on $C$ and a choice of square root $L^{1/2}$ of $L,$ we have a corresponding Kostant section $\eta_{\kappa}: A(G,L)\to \stHig(C,G,L)$ of the Hitchin morphism.

\noindent \underline{\textit{The Picard stack.}} 
    We define $\stP(L)\to A(G,L)$ to be the relative stack of $J_L$-torsors on the trivial family $C_{A(G,L)} \to A(G,L)$.
We refer to $\stP(L)$ as the Picard stack. It is an algebraic stack, and the structure morphism $\stP(L) \to A(G,L)$ is smooth by \cite[Prop. 4.3.5]{ngo-lemme-fondamental}.

\noindent \underline{\textit{Action of the Picard stack.}} 
Let $S$ be a $k$-scheme. The data of an $S$-point of $\stHig(C,G,L)$ corresponds to a section $(E,\phi): C_S\to ([\fg/G]_L)_S.$
The pullback group scheme $(E,\phi)^*I_{L}$ is isomorphic to the relatively affine group scheme of Higgs bundle automorphisms $Aut(E,\phi) \to C_S$. 
Let us denote by $b=h(E,\phi)\in A(G,L)(S)$ the image under the Hitchin morphism, 
which corresponds to a section $b: C_S\to (\fc_L)_S.$
Set $J_b:= b^*J_L.$
Then the morphism $a:\chi^*J\to I$ induces a homomorphism
\begin{equation}
    \label{eqn: a}
    a_{(E,\phi)}: J_b\to Aut(E,\phi)
\end{equation}
 of relatively affine group schemes on $C_S$.
 Given a $J_b$-torsor $F$, we can then form another Higgs bundles $F\times^{J_b, a_{(E,\phi)}}(E,\phi)$, which also lies over $b$.
 This defines the action of $\stP(L)$ on $\stHig(C,G,L)$.

\noindent \underline{\textit{De Rham stack.}} The de Rham stack $\stdR(C,G)$ is defined to be the stack of flat $G$-connections, i.e., $G$-torsors equipped with flat connections.
Given a flat $G$-connection $(E,\nabla)$ on $C,$ the $p$-curvature $\Psi(\nabla)$ is a $\omega_{C}^{\otimes p}$-twisted Higgs field on $E,$ see \cite[\S A.6]{chen-zhu}.
By \cite[Prop. 3.1]{chen-zhu}, there is a de Rham-Hitchin morphism $h_{dR}: \stdR(C)\to A(\omega_{C'})$ fitting in the following commutative diagram of $k$-stacks:
\begin{equation*}
    \xymatrix{
    \stdR(C,G) \ar[r]^-{\Psi} \ar[d]_-{h_{dR}} &
    \stHig(C,G,\omega_{C}^{\otimes p}) \ar[d]^-{h}\\
    A(G,\omega_{C'}) \ar[r]_-{Fr^*} & A(G,\omega_{C}^{\otimes p}),
    }
\end{equation*}
where $\Psi$ is given by taking $p$-curvatures, and
$Fr^*$ is given by Frobenius pullback of sections.

 In this subsection we review the constructions in \cite{chen-zhu}, leading to the stack $\stsp(G)$ of what we call $G$-splittings, and the Non Abelian Hodge isomorphism.
 
 \noindent \underline{\textit{The tautological section $\tau$.}}
Consider the scheme of Lie algebras $Lie(I) = \Spec_{\fg}(\Sym^{\bullet}(\Omega^1_{I/\fg}))$ over $\fg$, which may be alternatively described as
\[ Lie(I) = \{ \, x, y \in \mathfrak{g} \, \mid \, [x,y]=0 \, \} \subset \mathfrak{g}^2 \xrightarrow{pr_2} \mathfrak{g}.\]
There is a tautological section $\tau_0:\fg\to Lie(I)$ induced by $x\mapsto x.$
 By \cite[Lem. 2.2]{chen-zhu}, this section $\tau_0$ induces a tautological section $\tau:\fc\to Lie(J).$

\noindent \underline{\textit{The $J$-Hitchin System.}}
The section $\tau:\fc\to Lie(J)$ is $\bG_m$-equivariant for the natural $\bG_m$-actions induced by the diagonal action on $Lie(I)\subset \fg\times \fg.$
Therefore, given any line bundle $L$ on $C,$ we can twist $\tau$ by the $\bG_m$-torsor $L^{\times}$ to obtain $\tau(C,L): \fc_L \to Lie(J)_L$.

\begin{defn}
    The $J$-Hitchin base $A(J,L)$ is the $A(G,L)$-functor that sends an $ A(G,L)$-scheme $b:S \to A(G,L)$, corresponding to a section $b: C_S\to (\fc_L)_S$, to the set $A(J,L)(b)$ of $C_S$-sections of $b^*\Big(Lie(J)_L\Big)\cong Lie(J_b)_L$ \cite[p.1713]{chen-zhu}.
\end{defn}

\begin{defn}\label{jhit}
The $J$-Hitchin system is the following diagram of $A(G,L)$-stacks:
\begin{equation}\label{JHit}
\xymatrix{
    \stHig(C,J,L) \ar[rr]^-{h(C,J,L)}&& A(J,L) \ar@/^/[rr]^-{p(C,L)} && A(G,L) \ar@/^/[ll]^-{\tau(C,L)},
} 
\end{equation}
where, given any $k$-scheme $S$ and any $b\in A(G,L)(S)$ (giving rise to $b: C_S\to \fc_L$):
\begin{enumerate}
    \item $\stHig(C,J,L)(b)$ is the groupoid of $J_b$-Higgs bundles $Sect(C_S,[Lie(J_b)/J_b]_L);$
    \item $h(b)$ is induced by the natural morphism $[Lie(J_b)/J_b]_L\to (Lie(J_b)\git J_b)_L=Lie(J_b)_L;$
    \item $p$ is the structure morphism of the $A(G,L)$-functor $A(J,L)$;
    \item $\tau(b)$ is the pullback $b^*$ of the twisted section 
    $\tau_L: \fc_L\to Lie(J)_L.$
\end{enumerate}
\end{defn}
\begin{remark}
    The pullback $\cM_{Dol}(C,J,L)\times_{A(J,L),\tau(C,L)} A(G,L)$ is naturally identified with the Picard stack $\stP(L)$.
    Indeed, let $b:S\to A(G,L)$ and put $J_b=b^*J_L$. Since $J_b$ is commutative, its adjoint action on $Lie(J_b)$ is trivial, so an object of $\stHig(C,J,L)(b)$ is a pair $(F,\varphi)$, where $F$ is a $J_b$-torsor on $C_S$ and $\varphi\in \Gamma(C_S,Lie(J_b)_L)$. The map $h(C,J,L)$ sends $(F,\varphi)$ to $\varphi$. Base changing by $\tau(C,L)$ imposes $\varphi=b^*\tau_L$, and leaves exactly the $J_b$-torsor $F$. This is functorial in $S$ and $b$, hence gives the desired identification with $\stP(L)$.
\end{remark}

\noindent \underline{\textit{The de Rham-$J$-Hitchin System.}}
\begin{notn}
    For any $k$-scheme $S$ and any $b'\in A(G,\omega_{C'})(S)$, 
we set $b^p:=Fr_{C_S/S}^*b'\in A(G,\omega_{C}^{\otimes p})(S).$ We set $J^p$ to be the smooth commutative group scheme over $C\times A(G, \omega_{C'})$ given by the fiber product $J^p := J_{\omega_{C}^{\otimes p}} \times_{A(G,\omega_{C}^{\otimes p}), Fr^*} A(G, \omega_{C'})$.
\end{notn}

The affine group scheme $J_{b^p}:=(b^p)^*J_{\omega_{C_S/S}^{\otimes p}}\cong J_{b'} \times_{C_S'} C_S$ on $C_S$ admits the Cartier connection $\nabla^{can}$ \cite[Thm. 5.1]{katz1970}.
Therefore, the notion of $J_{b^p}$-connections (i.e., $J_{b^p}$-torsors with equipped with connections) makes sense, see the formalism in \cite[\S A]{chen-zhu}.
\begin{defn}
    We define $\stdR(C,J^p)$ to be the $A(G,\omega_{C'})$-stack whose fiber over $b'\in A(G,\omega_{C'})(S)$ is the groupoid of 
flat $J_{b^p}$-connections $(E,\nabla)$ on $C_S/S.$
\end{defn}
By taking the $p$-curvature of such a $(E,\nabla)$ as above, we obtain a $Fr^*\omega_{C'_S/S}$-twisted $J_{b^p}$-Higgs bundle $(E,\Psi(\nabla))$ on $C_S.$
The Higgs field $\Psi(\nabla)$ defines a section $C_S\to Lie(J_{b^p})_{Fr^*\omega_{C'_S/S}}\cong Fr^*\Big( Lie(J_{b'})_{\omega_{C'_S/S}}\Big).$
By \cite[\S A.8]{chen-zhu} There is a section $\Psi'(\nabla):C'_S\to Lie(J_{b'})_{\omega_{C'_S/S}}$ such that we have an identity of sections $\Psi(\nabla)=Fr^*\Psi'(\nabla).$

\begin{defn}\label{defn: j-dr}
    The $J$-de Rham-Hitchin system is the following diagram of $A(G,\omega_{C'})$-stacks:
    \begin{equation}
        \xymatrix{
        \stsp(G) \ar[rr] \ar[d] && \stdR(C,J^p) \ar[r]^-{\Psi} \ar[d]^-{h_{dR}^J} & \stHig(C,J, \omega_{C}^{\otimes p}) \ar[d]^-{h(C,J,\omega_{C}^{\otimes p})} \\
        A(G,\omega_{C'}) \ar@/^/[rr]^-{\tau(C',\omega_{C'})} &&
        A(J,\omega_{C'}) \ar[r]^-{Fr^*} \ar@/^/[ll]^-{p(C',\omega_{C'})} &
        A( J, \omega_{C}^{\otimes p}),
        }
    \end{equation}
    where
    \begin{enumerate}
        \item $h(C,H,\omega_{C}^{\otimes p}),\tau(C',\omega_{C'}),p(C',\omega_{C'})$ are defined in Definition \ref{jhit};
        \item $\Psi$ takes a $J_{b^p}$-flat connection $(E,\nabla)$ to the Higgs bundle $(E,\Psi(\nabla))$;
        \item the existence of the morphism $h_{dR}^J$ follows from \cite[\S A.8]{chen-zhu};
        \item the $A(G,\omega_{C'})$-stack $\stsp(G)$ is defined so that the inner left square, with bottom horizontal arrow as $\tau(C',\omega_{C'})$, is Cartesian. 
    We call $\stsp(G)$ the stack of $G$-splittings.
    \end{enumerate}
\end{defn}

\begin{remark}
\label{curlyd}
    We name $\stsp(G)$ as the stack of $G$-splittings because of the following:
    In the $G=GL_n$-case, the crystalline differential operators $D_C$ on $C$ gives rise an Azumaya algebra $\cD$ on the cotangent bundle $T^*C'.$ 
    By \cite[Rmk. 3.13]{chen-zhu}, the stack $\stsp(GL_n)$ coincides with the stack of the splittings of the restrictions of $\cD$ to the spectral curves, as studied in \cite[\S3.4]{groechenig-moduli-flat-connections} and \cite[\S2.2]{dCGZ}.
    Moreover, as mentioned in \cite[\S3.4]{chen-zhu}, for general reductive $G,$ the stack $\stsp(G)$ also coincides with the splitting of the gerbe $\mathcal{G}_{\tau'}$ defined in \cite[Prop. A.7]{chen-zhu}.
\end{remark}

\begin{lemma}[Properties of the stack $\stsp(G)$ of $G$-splittings]
\label{cztech} \;

    \begin{enumerate}
        \item $\stsp(G)$ is smooth and surjective over $A(G,\omega_{C'});$
        \item $\stsp(G)$ is a torsor under the Picard stack $\stP(\omega_{C'})$. \qed
    \end{enumerate}
\end{lemma}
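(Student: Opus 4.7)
The plan is to unpack the Cartesian square defining $\stsp(C,G)$ and reduce both claims to Cartier descent for flat connections on the smooth commutative group scheme $J_{b'}$. Fix a $k$-scheme $S$ and a point $b' \in A(C',G,\omega_{C'})(S)$, and recall that $b^p = Fr^*b'$ and $J_{b^p} \cong Fr^* J_{b'}$. By \Cref{defn: j-dr} together with \Cref{facj}, an $S$-point of $\stsp(C,G)$ lying over $b'$ is precisely a flat $J_{b^p}$-connection $(E,\nabla)$ on $C_S/S$ whose descended $p$-curvature equals the tautological section $\tau(b')$ of $Lie(J_{b'})_{\omega_{C'_S/S}}$.

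For part (2), I would first construct the action of $\stP(C',\omega_{C'})$ on $\stsp(C,G)$ over $A(C',G,\omega_{C'})$. Given a $J_{b'}$-torsor $F$ on $C'_S$, its Frobenius pullback $Fr^*F$ is a $J_{b^p}$-torsor on $C_S$ carrying the canonical flat Cartier connection $\nabla^{can}$, whose $p$-curvature vanishes. Because $J_{b^p}$ is commutative and $p$-curvature is additive under the contracted product, the assignment $(E,\nabla) \mapsto (E \wedge^{J_{b^p}} Fr^*F,\, \nabla + \nabla^{can})$ preserves the defining equation $\Psi'(\nabla) = \tau(b')$, yielding an action on $\stsp(C,G)$. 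Simple transitivity follows directly from Cartier descent: given two objects $(E_1,\nabla_1)$ and $(E_2,\nabla_2)$ over $b'$, the contracted difference $E_2 \wedge^{J_{b^p}} E_1^{-1}$ inherits a flat connection whose $p$-curvature vanishes (as the $p$-curvatures of the $(E_i,\nabla_i)$ agree), and hence descends uniquely to a $J_{b'}$-torsor on $C'_S$, giving the required preimage in $\stP(C',\omega_{C'})$.

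For part (1), it suffices to establish surjectivity (i.e., the existence of étale-local sections), since smoothness of $\stsp(C,G) \to A(C',G,\omega_{C'})$ will then follow from the torsor property and the fact that $\stP(C',\omega_{C'}) \to A(C',G,\omega_{C'})$ is smooth, as recorded after \Cref{defn: picard stack}. I expect this existence question to be the main obstacle. The cleanest route is the interpretation recalled in \Cref{curlyd}: the stack $\stsp(C,G)$ identifies with the stack of splittings of the $J$-gerbe $\mathcal{G}_{\tau'}$ constructed in \cite[Prop. A.7]{chen-zhu}, and existence of étale-local splittings is exactly the conclusion of that proposition. An alternative, more geometric, argument would transport the Kostant section $\eta_\kappa$ on $C'$ through the (stacky) Chen-Zhu isomorphism to produce a distinguished flat $G$-connection, whose associated $J_{b^p}$-connection under the regular centralizer comparison $a$ of \eqref{eqn: a} then provides a global section and, in particular, fiberwise non-emptiness.
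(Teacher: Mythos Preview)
The paper does not supply its own proof of this lemma: the \qed\ in the statement signals that it is quoted directly from Chen--Zhu, with smoothness attributed to \cite[Lem.~3.7]{chen-zhu}, surjectivity to \cite[\S3.4]{chen-zhu}, and the torsor property to \cite[Thm.~3.8]{chen-zhu}. Your sketch is essentially a faithful reconstruction of those arguments, and the core ideas---the action via contracted product with Frobenius pullbacks carrying the Cartier connection, simple transitivity via Cartier descent on the difference torsor, and surjectivity via the gerbe interpretation of \cite[Prop.~A.7]{chen-zhu}---are all correct and match the Chen--Zhu approach.

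Two remarks on differences. First, you deduce smoothness \emph{a posteriori} from the torsor property and the known smoothness of $\stP(C',\omega_{C'})$, whereas Chen--Zhu prove smoothness directly (via deformation theory) in their Lemma~3.7 before establishing the torsor structure; your ordering is logically fine and arguably cleaner, but be aware that in Chen--Zhu the smoothness is in fact used as input to the surjectivity argument in \S3.4, so if you were to write this out in full you would want to make sure the gerbe route to surjectivity does not secretly invoke smoothness. Second, your ``alternative, more geometric'' argument for surjectivity---transporting the Kostant section through the Chen--Zhu isomorphism \eqref{cziso}---is circular as stated: that isomorphism is proved in \cite[Thm.~3.12]{chen-zhu} \emph{after} and \emph{using} the torsor property of $\stsp(C,G)$. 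Drop that alternative, or rephrase it to avoid invoking \eqref{cziso}.
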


The smoothness is proved in \cite[Lem. 3.7]{chen-zhu}, and the surjectivity is proved in \cite[\S3.4]{chen-zhu}.
The fact that it is a torsor is \cite[Thm. 3.8]{chen-zhu}.
The action of $\stP(\omega_{C'})$ on $\stsp(G)$ is defined as follows. Choose a point $b'$ of $A(G,\omega_{C'})$, let $F'$ be a $J_{b'}$-torsor on $C'$, and let $(E,\nabla)$ be a $J_{b^p}$-flat connection on $C.$
Then the action is defined as $F\cdot (E,\nabla):= (Fr^*F\times^{J_{b^p}} E,\nabla^{can}\otimes \nabla),$
where $\nabla^{can}$ is the Cartier descent connection, see \cite[\S A.0.6]{de2025logarithmic} for the definition of the connection on the twisted product.

 \begin{thm}\cite[Thm. 3.12]{chen-zhu}
     There is a $\stP(\omega_{C'})$-equivariant morphism of $A(G,\omega_{C'})$-stacks
     \[
\widetilde{\fC}:\stsp(G)\times_{A(G,\omega_{C'})} \stHig(C',G,\omega_{C'})\to \stdR(C,G),
    \]
    inducing an isomorphism of $A(G,\omega_{C'})$-stacks
    \begin{equation}\label{cziso}
    \fC: \stsp(G)\times^{\stP(\omega_{C'})} \stHig(C',G,\omega_{C'})\xrightarrow{\sim} \stdR(C,G),
\end{equation}
where the source is the twisted product as in Subsection \ref{1stnot}.
 \end{thm}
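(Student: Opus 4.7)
The plan is to define $\widetilde{\fC}$ via an explicit "twisted contracted product" formula, verify its $\stP(C',\omega_{C'})$-equivariance, and then reduce the proof that $\fC$ is an isomorphism to a computation étale-locally on $A(C',G,\omega_{C'})$ where $\stsp(C,G)$ has been trivialized. Given a point $b' \in A(C',G,\omega_{C'})$ and compatible objects $(E,\nabla) \in \stsp(C,G)$, $(F,\phi) \in \stHig(C',G,\omega_{C'})$, I set $b^p := Fr^*b'$ and define
\[
\widetilde{\fC}\big((E,\nabla),(F,\phi)\big) := \big(E \times^{J_{b^p},\, a_{(Fr^*F,\,Fr^*\phi)}} Fr^*F,\; \nabla \otimes \nabla^{can}\big),
\]
where $a_{(Fr^*F,\,Fr^*\phi)}: J_{b^p} \to \mathrm{Aut}(Fr^*F,Fr^*\phi)$ is the morphism from \eqref{eqn: a} and $\nabla^{can}$ is the Cartier connection on $Fr^*F$. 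The contracted product makes sense because $J_{b^p}$ is commutative and both connections are flat, so the output is a flat $G$-torsor on $C$. A Leibniz-style $p$-curvature calculation, using $\Psi(\nabla) = Fr^*\tau(b')$ (the defining property of objects in $\stsp$), the vanishing of the $p$-curvature of $\nabla^{can}$, and the compatibility encoded in diagram \eqref{liei}, shows that the image has de Rham-Hitchin invariant $b'$ and that its $p$-curvature corresponds to $Fr^*\phi$ via $a$. Thus $\widetilde{\fC}$ is a morphism of $A(C',G,\omega_{C'})$-stacks into $\stdR(C,G)$.

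I next check that $\widetilde{\fC}$ is $\stP(C',\omega_{C'})$-equivariant under the diagonal-inverse action. For a $J_{b'}$-torsor $F_0$ on $C'$, acting on the $\stsp$-factor (by the formula recalled after \Cref{cztech}) and on the $\stHig$-factor by $F_0^{-1}$ (via \eqref{dat}) yields, using associativity of contracted products and the compatibility of $\nabla^{can}$ with $Fr^*$, the same flat $G$-connection up to canonical isomorphism. Hence $\widetilde{\fC}$ descends to the twisted product, producing $\fC$. For the isomorphism statement, the torsor property in \Cref{cztech} provides a smooth cover $U \to A(C',G,\omega_{C'})$ along which $\stsp(C,G)$ admits a section $(E_s, \nabla_s)$; over $U$ the source of $\fC$ becomes canonically identified with $\stHig(C',G,\omega_{C'})_U$, and $\fC$ reduces to the explicit morphism $(F,\phi) \mapsto (E_s \times^{J_{b^p}} Fr^*F,\; \nabla_s \otimes \nabla^{can})$.

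To conclude I construct an inverse on $U$: given a flat $G$-connection $(E',\nabla')$ with de Rham-Hitchin image $b'$, one forms the sheaf $\uIsom$ of flat $J_{b^p}$-equivariant isomorphisms from $(E_s,\nabla_s)$ to $(E',\nabla')$, where the $J_{b^p}$-actions on both sides arise from their respective $p$-curvatures through $a$. The equality $\Psi(\nabla_s) = Fr^*\tau(b')$ together with the fact that $\Psi(\nabla')$ lies in the image of $Fr^*$ up to the action $a$ implies that this $\uIsom$-sheaf is a flat $J_{b^p}$-torsor whose $p$-curvature vanishes, so Cartier descent produces a $J_{b'}$-torsor $F$ on $C'_U$ equipped with a Higgs field $\phi$ read off from $\Psi(\nabla')$ via $a$; by construction $\widetilde{\fC}((E_s,\nabla_s),(F,\phi)) \cong (E',\nabla')$. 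The main obstacle is the rigorous verification that this descent produces a genuine $J_{b'}$-Higgs bundle whose Hitchin image is $b'$ for arbitrary reductive $G$: for $G = GL_n$ this reduces to splittings of the Azumaya algebra $\cD$ as recalled in \Cref{curlyd}, but in general one must work directly with the regular centralizer $J$ and invoke \Cref{facj} together with the tautological diagram \eqref{liei} to identify the descended Higgs field correctly.
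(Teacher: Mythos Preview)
The paper does not actually prove this theorem: it is cited as \cite[Thm.~3.12]{chen-zhu}, and the paper only recalls the construction of $\widetilde{\fC}$ via the formula
\[
\widetilde{\fC}((E,\nabla),(F,\phi)):=(E\times^{J_{b^p}}_{a_{Fr^*(F,\phi)}} Fr^*F,\ \nabla\otimes \nabla^{can}),
\]
together with the remark that diagram~\eqref{liei} lets one check the image has de Rham--Hitchin invariant $b'$. Your formula for $\widetilde{\fC}$ agrees exactly with this, so on the only point the paper addresses, you match it.

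Everything past that---the equivariance check, the reduction to a trivialized torsor, and especially the construction of an inverse---goes beyond what the paper does and is really an attempted reconstruction of Chen--Zhu's argument. The overall strategy (trivialize $\stsp$ \'etale-locally and build an explicit inverse via Cartier descent) is reasonable, but your inverse has a genuine imprecision. You form ``the sheaf $\uIsom$ of flat $J_{b^p}$-equivariant isomorphisms from $(E_s,\nabla_s)$ to $(E',\nabla')$'', but $(E_s,\nabla_s)$ is a $J_{b^p}$-torsor with connection while $(E',\nabla')$ is a $G$-torsor with connection, so ``isomorphisms'' between them is not a well-defined notion as stated. You presumably intend $J_{b^p}$-equivariant \emph{maps} $E_s\to E'$ (with $J_{b^p}$ acting on $E'$ via $a$ applied to the $p$-curvature Higgs bundle), but then one must argue separately that this sheaf is a $G$-torsor carrying a flat connection of vanishing $p$-curvature so that Cartier descent applies, and that the descended object is a $G$-Higgs bundle on $C'$ with Hitchin image $b'$---not a $J_{b'}$-torsor as you wrote. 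You flag this yourself as ``the main obstacle,'' and it is; the actual argument in \cite{chen-zhu} handles it by interpreting both sides as gerbes/torsors over the $J$-Hitchin base and invoking their Appendix~A rather than by an ad hoc Isom construction.
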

Let us recall the construction of $\widetilde{\fC}.$
 Let $b'\in A(G,\omega_{C'})(S),$ let $(E,\nabla)\in \stsp(G)(b'),$ and let $(F,\phi)\in \stHig(C',G,\omega_{C'})(b').$ Then $\widetilde{\fC}((E,\nabla),(F,\phi)):=(E\times^{J_{b^p}}_{a_{Fr^*(F,\phi)}} Fr^*F, \nabla\otimes \nabla^{can}),$ where the subscript $a$ is defined as in equation \eqref{eqn: a}. 

\subsection{Connected components}\label{sec: conn}\;

In this subsection, we prove some preparatory lemmas on the group of connected components of the Picard stack of symmetries of Hitchin fibration $\stP(\omega_{C'}).$
In particular, we show that, under the assumption that $p\nmid |W|,$ the abelian group $\pi_0(\stP(\omega_{C'}))$ is $p$-torsion free.

By \cite[Prop. 2.2.1]{romagny2011composantes}, the Picard stack $\stP(L)$ admits a relative identity component $\stP^o(L)$, given by the neutral connected components over geometric points of $A(G,L)$.
In the case when $G=GL_n,$ the stack $\stP^o(L)$ is given by line bundles of multi-degree $0$ on the spectral curves.
By the openness of $\stP^o(L)$ in $\stP(L)$ and the smoothness of $\stP(L)$ \cite[Prop. 4.3.5]{ngo-lemme-fondamental}, we have that the stack $\stP^o(L)$ is smooth over the Hitchin base $A(G,L).$

To state Definition \ref{defn: pi_0 P} below, we need to justify taking quotients by a Picard stack. 
The quotient of a groupoid $X$ by a Picard category $Q$ is discussed in \cite[Lem. 4.7]{ngo2006fibration}, which states that if the morphism $Aut(1_Q)\to Aut(x)$ is injective for every object $x$ of $X,$ then the 2-categorical quotient $X/Q$ is equivalent to a 1-category. There is also a criterion for when the quotient is equivalent to a set.
\begin{lemma}
\label{lemma: quot set}
    Let $Q$ be a Picard category acting on a groupoid $X.$ 
    If, for any objects $q$ of $Q$ and $x$ of $X,$ the morphism of sets $act(-,1_x): Hom_Q(1_Q, q)\to Hom_X(x, qx)$ is bijective, then the quotient 2-category $X/Q$ is equivalent to a set.
\end{lemma}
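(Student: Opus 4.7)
The plan is to reduce the claim to showing that the groupoid $X/Q$ has trivial automorphism group at every object, from which equivalence to a set is automatic. Explicitly, $X/Q$ has the same objects as $X$; a $1$-morphism $x \to y$ is an equivalence class of pairs $(q,\phi)$ with $q \in Q$ and $\phi : q\cdot x \to y$ in $X$; and a $2$-morphism $(q,\phi) \Rightarrow (q',\phi')$ is an isomorphism $\alpha : q \to q'$ in $Q$ satisfying $\phi' \circ (\alpha\cdot 1_x) = \phi$. Specialising the hypothesis to $q = 1_Q$ yields in particular that $\Aut_Q(1_Q) \to \Aut_X(x)$ is injective for every $x$, so the cited result \cite[Lem. 4.7]{ngo2006fibration} already gives that $X/Q$ is equivalent to an ordinary $1$-category, namely the one obtained by passing to $2$-isomorphism classes of $1$-morphisms.

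The main work is then to show that $\Aut_{X/Q}(x)$ is trivial for every object $x$. Such an automorphism is represented by a pair $(q,\phi : q\cdot x \to x)$, and it agrees with the identity class $(1_Q, 1_x)$ in $X/Q$ precisely when there exists some $\alpha : q \to 1_Q$ in $Q$ with $\alpha \cdot 1_x = \phi$. So triviality amounts to the surjectivity of the evaluation map $\Hom_Q(q, 1_Q) \to \Hom_X(q\cdot x, x)$, $\alpha \mapsto \alpha \cdot 1_x$. To reduce this to the given bijectivity hypothesis, I would choose a quasi-inverse $q^{-1}$ of $q$ together with an isomorphism $q \otimes q^{-1} \cong 1_Q$ in the Picard category $Q$. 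Translation by $q^{-1}$ produces canonical bijections $\Hom_Q(q, 1_Q) \cong \Hom_Q(1_Q, q^{-1})$ and $\Hom_X(q\cdot x, x) \cong \Hom_X(x, q^{-1}\cdot x)$, and these intertwine the two evaluation maps. Under these identifications the map we want to prove surjective becomes exactly the hypothesis map $\Hom_Q(1_Q, q^{-1}) \to \Hom_X(x, q^{-1}\cdot x)$, which is in fact bijective.

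Once every $\Aut_{X/Q}(x)$ is trivial, any two parallel $1$-morphisms $f, g : x \to y$ in $X/Q$ must coincide, because $g^{-1} \circ f$ is an automorphism of $x$ and hence the identity; so each Hom-set of $X/Q$ has at most one element, and $X/Q$ is equivalent to its set of isomorphism classes of objects. The main obstacle I foresee is the bookkeeping required to verify that the two translation bijections above really do intertwine the evaluation maps, since this is a coherence statement about the Picard structure on $Q$ and its action on $X$; no nontrivial unit or commutativity constraint should intervene, but the identifications need to be unwound carefully to make sure of this.
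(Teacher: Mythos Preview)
Your argument is correct and follows essentially the same route as the paper: both reduce to the surjectivity of $act(-,1_x):\Hom_Q(q_1,q_2)\to\Hom_X(q_1x,q_2x)$ and establish it by translating via an inverse in $Q$ to land on the hypothesised bijection. The only cosmetic difference is that you first treat the special case of automorphisms and then recover equality of general parallel morphisms via $g^{-1}\circ f$, whereas the paper handles arbitrary parallel morphisms $(q_1,\alpha_1),(q_2,\alpha_2):x_1\to x_2$ in one step.
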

\begin{proof}
    By \cite[Lem. 4.7]{ngo2006fibration} and the discussion before it, it suffices to show that given any two objects $x_1, x_2$ in $X,$ two objects $q_1, q_2$ in $Q,$ and two morphisms $\alpha_1: q_1x_1\to x_2$ and $\alpha_2: q_2x_1\to x_2$ in $X,$ there exists a morphism $\beta: q_1\to q_2$ such that the triangle formed by $act(\beta, 1_{x_1}): q_1x_1\to q_2x_1, \alpha_1,$ and $\alpha_2$ is commutative in $X$.
In other words, we hope to
find a $\beta: q_1 \to q_2$ such that $act(\beta,1_{x_1}) = \alpha_2^{-1} \circ \alpha_1$.
     We are reduced to show that the surjectivity of $act(-, 1_{x_1}): Hom_Q(q_1, q_2)\to Hom_X(q_1x_1, q_2x_1).$ 
    The action of $1_{q_1^{-1}}$ induces bijections $Hom_Q(q_1,q_2)= Hom(1_Q,q_1^{-1}q_2)$ and $Hom_X(q_1x_1,q_2x_1)=Hom_X(x_1, q_1^{-1}q_2x_1).$ The desired surjectivity then follows from the assumption.
\end{proof}

\begin{lemma}
\label{lemma: quot sub pic}
    Let $Q$ be a full Picard subcategory of a Picard category $X.$
    The quotient 2-category $X/Q$ is equivalent to a set $\wt{X/Q}$.
    Furthermore, the Picard category structure on $X$ induces an abelian group structure on $\wt{X/Q}.$
\end{lemma}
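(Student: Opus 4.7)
The plan is to reduce to the criterion established in Lemma \ref{lemma: quot set} and then transport the group structure along the quotient map.

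\textbf{Step 1: $X/Q$ is a set.} The action of $Q$ on $X$ is by the Picard sum, so for $q \in Q$ and $x \in X$ we have $q \cdot x = q + x$, and the action map on Hom-sets is
\[
    \operatorname{Hom}_Q(1_Q, q) \longrightarrow \operatorname{Hom}_X(x, q+x), \qquad \alpha \longmapsto \alpha + \operatorname{id}_x.
\]
I would observe that the endofunctor $-+x \colon X \to X$ is an equivalence of categories in any Picard category (with quasi-inverse $-+x^{-1}$), so it induces a bijection $\operatorname{Hom}_X(1_Q, q) \xrightarrow{\sim} \operatorname{Hom}_X(1_Q+x,\, q+x) = \operatorname{Hom}_X(x, q+x)$, which is precisely the action map. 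Since $Q \subset X$ is full, $\operatorname{Hom}_Q(1_Q,q) = \operatorname{Hom}_X(1_Q,q)$, so the hypothesis of \Cref{lemma: quot set} is verified and $X/Q$ is equivalent to a set $\wt{X/Q}$.

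\textbf{Step 2: abelian group structure on $\wt{X/Q}$.} For classes $[x_1], [x_2] \in \wt{X/Q}$ I would define
\[
    [x_1] + [x_2] := [x_1 + x_2].
\]
Well-definedness amounts to the observation that if $x_i' \cong q_i + x_i$ with $q_i \in Q$ for $i=1,2$, then by commutativity and associativity in $X$ we have $x_1' + x_2' \cong (q_1+q_2) + (x_1 + x_2)$, and $q_1+q_2 \in Q$ because $Q$ is a Picard subcategory. Commutativity, associativity and the identity element $[1_X]$ are immediately inherited from $X$. Finally, every class $[x]$ has inverse $[x^{-1}]$, where $x^{-1}$ is an inverse of $x$ in $X$ (guaranteed to exist since $X$ is Picard).

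\textbf{Expected difficulty.} The only subtle point is verifying the bijectivity in Step 1, and this is a formal consequence of the invertibility of translation in a Picard category together with the fullness of $Q \subset X$; once that is in place, both assertions of the lemma follow from essentially formal manipulations, so I do not anticipate any serious obstacle.
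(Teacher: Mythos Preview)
Your proof is correct and follows essentially the same approach as the paper: both verify the bijectivity hypothesis of \Cref{lemma: quot set} by using translation by $x^{-1}$ as an inverse to $act(-,1_x)$, together with fullness of $Q\subset X$, and then note that the group structure descends directly. The paper's proof is simply a terser version of what you wrote, leaving the second statement as ``can be checked directly.''
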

\begin{proof}
    Given any object $x$ of $X$ and object $q$ of $Q,$ the morphism $act(-,1_{x}): Hom_Q(1_Q,q)=Hom_X(1_X,q)\to Hom_X(x,qx)$ is a bijection with inverse given by $act(-, 1_{x^{-1}}).$ The first statement then follows from \Cref{lemma: quot set}. The second statement can be checked directly.
\end{proof}

\begin{defn}
\label{defn: pi_0 P}
We denote by $\pi_0(\stP(L))$ the sheaf of abelian groups on the big \'etale site of $A(G,L)$ associated to the quotient functor $\stP(L)/\stP^o(L)$ as in \Cref{lemma: quot sub pic}.
\end{defn}

For any geometric point $b$ of $A(G,L)$, the restriction $\pi_0(\stP(L))|_{b}$ is the constant sheaf associated to the group of components $\pi_0(\stP(L)_b)$ of the smooth group stack $\stP(L)_b$ over $b$.

\begin{lemma} \label{lemma: geometric points vs sheaf}
    Let $U$ be a scheme over $A(G,L)$ and let $s,s' \in \pi_0(\stP(L))(U)$. Then $s = s'$ if and only if for all geometric points $b$ of $U$, we have equality of pullbacks $s|_{b} = s'|_{b}$.
\end{lemma}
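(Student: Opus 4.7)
The plan rests on two observations: the inclusion $\stP^o(C,L) \subset \stP(C,L)$ is an open substack (by \Cref{verygj}), and an open subscheme of a scheme $U'$ that has every point of $U'$ in its underlying set must coincide with $U'$. The ``only if'' direction is trivial. For the converse, I first use the abelian group structure on $\pi_0(\stP(C,L))$ provided by \Cref{lemma: quot sub pic} to replace $(s,s')$ by $(s - s', 0)$; it then suffices to show that if $s \in \pi_0(\stP(C,L))(U)$ satisfies $s|_b = 0$ for every geometric point $b$ of $U$, then $s = 0$.

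Since $\pi_0(\stP(C,L))$ is the \'etale sheafification of the quotient functor $\stP(C,L)/\stP^o(C,L)$, there exists an \'etale cover $U' \to U$ over which $s$ lifts to an honest object $F \in \stP(C,L)(U')$, classified by a morphism $F: U' \to \stP(C,L)$. Let $V := F^{-1}(\stP^o(C,L))$, which is an open subscheme of $U'$ by openness of $\stP^o(C,L)$ in $\stP(C,L)$. For any geometric point $b'$ of $U'$, composition with $U' \to U$ yields a geometric point $b$ of $U$; the remark following \Cref{defn: pi_0 P} identifies $s|_{b'}$ with the class of $F|_{b'}$ in the discrete group $\pi_0(\stP(C,L)_{b'})$, and the hypothesis forces this class to be trivial, so $F|_{b'}$ lies in the neutral component and $b'$ factors through $V$. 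Since every point of $U'$ admits a geometric point lying above it, the open subset $V$ contains every point of $U'$, hence $V = U'$. Therefore $F \in \stP^o(C,L)(U')$, so $s|_{U'} = 0$ in $\pi_0(\stP(C,L))$, and by the sheaf property $s = 0$ on $U$. The only mildly subtle step is the \'etale-local lifting of $s$ to an actual object of $\stP(C,L)$, but this is automatic from the definition of $\pi_0(\stP(C,L))$ as a sheafification.
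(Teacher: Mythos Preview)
Your proof is correct and follows essentially the same approach as the paper: reduce to $s'=0$ via the group structure, pass to an \'etale cover on which $s$ lifts to an actual object of $\stP(C,L)$, and then use openness of $\stP^o(C,L)$ together with the pointwise hypothesis to conclude the lift lands in $\stP^o(C,L)$. The only difference is expository---you name the open preimage $V$ and invoke the sheaf condition to descend back to $U$ explicitly, whereas the paper's version is slightly more terse.
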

\begin{proof}
    By subtracting $s-s'$, we may assume that $s'=0$. After passing to an \'etale cover of $U$, we may assume that $s$ comes from a section $\widetilde{s}: U \to \stP(L)_{U}$. Our assumption implies that for all geometric points $b$ of $U$, the restriction $\widetilde{s}|_{b}: b \to \stP(L)_b$ lands in the open substack $\stP^o(L)_b$. Therefore, the section $\widetilde{s}$ lands in $\stP^o(L)_U$, and it follows that $s=0$.
\end{proof}

\begin{prop}\label{prop: p torsion prop}
Under our assumption that $p\nmid |W|,$ we have that for any geometric point $b$  of $A(G,L),$ the group $\pi_0(\stP(L)_b)$ is $p$-torsion free.
\end{prop}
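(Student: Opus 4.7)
My plan is to use Ngô's cameral cover analysis to show that the torsion of $\pi_0(\stP(C,L)_b)$ is annihilated by a power of $|W|$, which under the hypothesis $p \nmid |W|$ kills the $p$-primary component. Let $\pi\colon \widetilde{C}_b \to C$ denote the cameral cover of $b$, a finite flat $W$-Galois cover that is étale over the regular semisimple locus $C^{\mathrm{rs}}$ of $b$. By the $L$-twisted version of \cite[Prop. 2.4.7]{ngo-lemme-fondamental}, the regular centralizer $J_b$ admits a canonical embedding $J_b \hookrightarrow (\pi_*\underline{T})^W$ of smooth commutative group schemes on $C$, which is an isomorphism over $C^{\mathrm{rs}}$. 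The local structure of $J_b$ at each point of the discriminant of $b$ (governed by the residual inertia in $W$, see \cite[\S3]{ngo-lemme-fondamental}) should then force the cokernel sheaf $Q := (\pi_*\underline{T})^W / J_b$ to be a constructible skyscraper supported on the discriminant whose stalks are annihilated by $|W|$.

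Applying $H^1(C, -)$ and using that $H^1(C, Q) = 0$ (since $Q$ is a skyscraper on a curve), the long exact sequence produces a surjection
\[
\pi_0(\stP(C,L)_b) \;=\; H^1(C, J_b) \;\twoheadrightarrow\; H^1(C, (\pi_*\underline{T})^W),
\]
whose kernel is a quotient of the finite $|W|$-torsion group $H^0(C, Q)$. The target compares, up to $|W|$-torsion error, with the $W$-equivariant cohomology $H^1_W(\widetilde{C}_b, \underline{T})$, and the Hochschild--Serre spectral sequence $E_2^{i,j} = H^i(W, H^j(\widetilde{C}_b, \underline{T})) \Rightarrow H^{i+j}_W(\widetilde{C}_b, \underline{T})$, combined with the standard fact that $H^i(W, -)$ is annihilated by $|W|$ for $i \geq 1$, exhibits it as an extension of a subgroup of $H^1(\widetilde{C}_b, \underline{T})^W$ by an $|W|$-torsion group. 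Since $H^1(\widetilde{C}_b, \underline{T}) = X_*(T) \otimes \mathrm{Pic}(\widetilde{C}_b)$, projecting to components yields a map to $(X_*(T) \otimes \mathbb{Z}^{\pi_0(\widetilde{C}_b)})^W$, which is free abelian, and therefore the torsion of $\pi_0$ of the target is killed by $|W|$.

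Combining the two steps, every torsion element of $\pi_0(\stP(C,L)_b)$ has order dividing a power of $|W|$, and so $p \nmid |W|$ forces the $p$-torsion to vanish. The main obstacle is making rigorous the local $|W|$-torsion bound on the stalks of $Q$, which reduces to the étale-local description of $J_b$ near each point of the discriminant of $b$ via the residual Weyl-type action on the formal cameral cover, following \cite[\S3]{ngo-lemme-fondamental}; once this local estimate is established, the remainder is a routine assembly of the cohomological long exact sequence and the Hochschild--Serre spectral sequence.
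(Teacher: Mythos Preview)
Your proposal contains a fundamental error that undermines the entire strategy. You write
\[
\pi_0(\stP(C,L)_b) \;=\; H^1(C, J_b),
\]
but this is false: $H^1(C, J_b)$ is the set of isomorphism classes of $J_b$-torsors on $C_b$, i.e.\ the set of isomorphism classes of $k$-points of the stack $\stP(C,L)_b$, whereas $\pi_0(\stP(C,L)_b)$ is the (much smaller) quotient by the neutral connected component $\stP^o(C,L)_b$. For instance, already when $J_b$ is the constant group $\mathbb{G}_m$ one has $H^1(C,\mathbb{G}_m)=\Pic(C)$ while $\pi_0(\stP)=\bZ$. This is not a cosmetic slip: your plan is to show that the torsion of $H^1(C,J_b)$ is $|W|$-power torsion, but that statement is simply untrue, since the neutral component contributes an abelian-variety-type piece whose $k$-points have $\ell$-torsion for every prime $\ell$. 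So even granting your local estimate on the cokernel $Q$ and the Hochschild--Serre comparison, the conclusion you are aiming at is false before you ever pass to $\pi_0$. A further issue is that the cameral cover $\widetilde{C}_b\to C$ is genuinely ramified, so the Hochschild--Serre spectral sequence for the $W$-action does not apply directly; the ``up to $|W|$-torsion'' comparison between $H^1(C,(\pi_*\underline{T})^W)$ and $H^1_W(\widetilde{C}_b,\underline{T})$ needs an argument you do not supply.

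The paper's proof avoids $H^1$ entirely and works directly with component groups. It first separates off $\pi_0(J_b)$, which is a quotient of the constant sheaf $\pi_0(Z_G)$ and hence automatically $p$-torsion free, and reduces to showing that $\pi_0(\Bun_{J_b^o}(C_b))$ is $p$-torsion free. For this it uses a norm trick: the inclusion $J_b^o\hookrightarrow \pi_*(T\times\widetilde{C})$ followed by the $W$-norm $Nm$ lands back in $J_b^o$, and the composite is the $|W|$-th power map. Passing to $\pi_0$ of the torsor stacks, the middle term $\pi_0(\Bun_T(\widetilde{C}))\cong X_*(T)\otimes \mathrm{NS}(\widetilde{C})$ is torsion free, so multiplication by $|W|$ kills $\pi_0(\Bun_{J_b^o}(C_b))$. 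This is precisely the kind of factorization through a torsion-free group that your spectral-sequence approach was groping towards, but it is carried out at the level of $\pi_0$ rather than $H^1$, which is where the statement actually lives.
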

\begin{proof}
Let $J_b^o$ be the neutral component of $J_b.$ Consider the exact sequence of smooth commutative $C_b$-group schemes
\begin{equation} \label{eqn: short exact sequence}
    1\to J_b^o\to J_b\to \pi_0(J_b)\to 1
\end{equation}
where $\pi_0(J_b) \to C_b$ is \'etale.
By \cite[Cor. 2.3.2]{ngo-lemme-fondamental}, there is a surjection $\pi_0(Z_G) \times C_b \to \pi_0(J_b).$
Since $\pi_0(Z_G)$ is finite and is automatically $p$-torsion free, we have that $\pi_0(J_b)$ is $p$-torsion free. 

The short exact sequence \eqref{eqn: short exact sequence} induces morphisms of group stacks of torsors 
\begin{equation} \label{eqn: short exact sequence torsor stacks}
    \Bun_{J_b^o}(C_b) \xrightarrow{f} \Bun_{J_b}(C_b) = \stP(L)_b  \xrightarrow{g} \Bun_{\pi_0(J_b)}(C_b).
\end{equation}

\noindent \textbf{Claim.} Fix a choice of $\pi_0(J_b)$-torsor $F$ on $C_b$ corresponding to a morphism of functors $b \to \Bun_{\pi_0(J_b)}(C_b)$. Then subset $g^{-1}(F) \subset |\stP^o(C,L)_b|$ given by the image of the fiber product $\stP^o(C,L)_b \times_{\Bun_{\pi_0(J_b)}(C_b)} b \to \stP^o(C,L)_b$ is open at the level of geometric points.

Let us briefly explain the proof of the \textbf{Claim}, which is standard, but there is some care needed because $\pi_0(J_b) \to C_b$ is not separated a priori. The fiber product $\stP^o(C,L)_b \times_{\Bun_{\pi_0(J_b)}} b$ is the functor whose $S$-points consist of pairs $(E, \psi)$ of a $J_b$-torsor $E$ on $C_S$ lying in $\stP^o(C,L)$ and a section $\psi: C_S \to \underline{\text{Iso}}(E, F|_{C_S})$ of the relatively finite type \'etale scheme $\underline{\text{Iso}}(E, F|_{C_S}) \to C_S$ classifying isomorphisms from $E$ to $F|_{C_S}$. A standard argument spreading out the sections $\psi$ shows that $g^{-1}(F)$ is locally constructible. To conclude openness, we need to show that it is closed under generalization. Since the stack $\stP^0(C,L)_b$ is locally Noetherian, this is equivalent to the following: for all complete discrete valuation rings $R$ and morphisms $\Spec(R) \to \stP^0(C,L)_b$, if the image of the special point lies on $g^{-1}(F)$, then the same holds for the image of the generic point. To see this, let $E$ denote the corresponding torsor on $C_{\Spec(R)}$, and let $s$ denote the special point of $\Spec(R)$. By assumption, after perhaps extending the residue field, there is a section $\psi: C_s \to \underline{\text{Iso}}(E, F|_{C_{\Spec(R)}})|_{C_s}$. Since $\underline{\text{Iso}}(E, F|_{C_{\Spec(R)}}) \to C_{\Spec(R)}$ is \'etale, we may extend this section over all nilpotent thickenings of $s$ in $\Spec(R)$. Then we may use Grothendieck's existence theorem, which holds for targets that are quasi-separated thanks to a Tannakian argument \cite[\href{https://stacks.math.columbia.edu/tag/0GHK}{Tag 0GHK}]{stacks-project}, to define a section $\widetilde{\psi}: C_{\Spec(R)} \to \underline{\text{Iso}}(E, F|_{C_{\Spec(R)}})$, which shows that the image of $\Spec(R)$ is contained in $g^{-1}(F)$. The \textbf{Claim} follows.

The \textbf{Claim} implies that any geometric point in the neutral component $\stP^o(C,L)_b$ lies in the image of $f$. Indeed, if this was not the case, then by the exactness on the left of \eqref{eqn: short exact sequence torsor stacks} at the level of geometric points, this would mean that the open $g^{-1}(\text{triv})$ for the trivial $\pi_0(J_b)$-torsor does not equal $\stP^o(C,L)_b$. Hence, there should be another $\pi_0(J_b)$-torsor $F$ with $g^{-1}(F) \neq \emptyset$. The two opens $g^{-1}(F)$ and $g^{-1}(\text{triv})$ would then be disjoint and nonempty in the connected and smooth (hence integral) stack $\stP^o(C,L)_b$, a contradiction.

It then follows that the $p$-torsion of $\pi_0(\stP(L)_b)$ comes from torsors in the image of the morphism $f$ in \eqref{eqn: short exact sequence torsor stacks}. Indeed, if the image of a $J_b$-torsor $E$ is $p$-torsion in $\pi_0(\stP(L))$, it means that its $p^{th}$ power $E^p$ is in $g^{-1}(\text{triv})$ (here we are using that $\stP^o(C,L)_b$ is contained in $g^{-1}(\text{triv})$). In other words, the associated $\pi_0(J_b)$-torsors $E^p(\pi_0(J_b))$ is trivializable. But, since $\pi_0(J_b)$ is $p$-torsion-free, this implies that also $E(\pi_0(J_b))$ is trivializable. Hence, $E$ is in $g^{-1}(\text{triv})$, which is exactly the image of $f$.

We conclude that the $p$-torsion in $\pi_0(\stP(L)_b)$ lies in the image of the induced map on component groups $\pi_0(f): \pi_0(\Bun_{J_b^o}(C_b)) \to \pi_0(\stP(L)_b)$. We are reduced to showing that $\pi_0(\Bun_{J_b^o}(C_b))$ is $p$-torsion free, which is proven in \Cref{lemma: torsions} below.
\end{proof}

\begin{lemma}
\label{lemma: torsions}
    The torsion primes in $\pi_0(\Bun_{J_b^o}(C_b))$ divide $|W|.$
\end{lemma}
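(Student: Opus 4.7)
The plan is to use the cameral cover $\pi: \widetilde{C}_b \to C_b$ attached to the Hitchin point $b$, which is a finite flat $W$-cover of degree $|W|$ that is étale over the regular semisimple locus, together with a pullback-and-trace argument. The key structural input is Ngô's description (\cite[Prop.~2.4.7]{ngo-lemme-fondamental} at the Lie-algebra level, together with the group-level analogue in \cite[Prop.~2.4.2]{ngo-lemme-fondamental}): the regular centralizer $J_b$ sits inside $(\pi_*\underline{T})^W$, and this inclusion is an isomorphism over the regular semisimple open subscheme $C_b^{rs} \subset C_b$, with cokernel supported on a proper closed subscheme. The identity component $J_b^o$ inherits a similar relation.

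Next, I would consider pullback along the cameral cover. Since $\pi$ is finite flat of degree $|W|$, there is a pullback map
\[
\pi^*: \pi_0\bigl(\Bun_{J_b^o}(C_b)\bigr) \longrightarrow \pi_0\bigl(\Bun_{\pi^*J_b^o}(\widetilde{C}_b)\bigr),
\]
and a trace/norm map going back, whose composition is multiplication by $\deg(\pi) = |W|$. This is the usual transfer formalism for finite flat morphisms of commutative group torsors (at the level of $H^1$, it is the standard statement that $\operatorname{cores}\circ \operatorname{res} = [\text{degree}]$; for Picard-type stacks it is verified by unwinding the norm morphism of sheaves of abelian groups on the curve).

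Third, I would argue that $\pi_0\bigl(\Bun_{\pi^*J_b^o}(\widetilde{C}_b)\bigr)$ is torsion-free. On $\widetilde{C}_b$, the generic fiber of $\pi^*J_b^o$ is the split torus $T$, and away from the ramification locus $\pi^*J_b^o$ coincides with the constant group scheme $T\times \widetilde{C}_b^{rs}$. Using the inclusion $\pi^*J_b^o \hookrightarrow \pi^*(\pi_*\underline{T})^W$ and the natural map from the latter to $\underline{T}$ on $\widetilde{C}_b$ obtained from the canonical $W$-equivariant trivialization of the pullback of a $W$-invariant pushforward, one produces a morphism to $\Bun_T(\widetilde{C}_b)$ whose induced map on component groups lands in $X_*(T)^{\pi_0(\widetilde{C}_b)}$, a free abelian group. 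One must verify that this morphism of component groups is injective, which reduces to showing that the finite cokernel of $J_b^o \hookrightarrow (\pi_*\underline{T})^W$ (which is supported on a proper closed subscheme) contributes only to the image and not to the kernel; this follows because any $J_b^o$-torsor that becomes trivializable as a $(\pi_*\underline{T})^W$-torsor sits in a connected family by Zariski's main theorem applied to the finite cokernel.

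Combining these steps, any torsion element $x \in \pi_0\bigl(\Bun_{J_b^o}(C_b)\bigr)$ maps to zero under $\pi^*$ (the target being torsion-free), and therefore $|W|\cdot x = \pi_* \pi^* x = 0$. Hence every torsion prime divides $|W|$. The main obstacle is step three: controlling the discrepancy between $\pi^*J_b^o$ and the constant torus $T$ over the ramification locus of the cameral cover, where $J_b^o$ genuinely has non-toric fibers. The bookkeeping for the cokernel of $J_b \hookrightarrow (\pi_*\underline{T})^W$ and its effect on the component group is the delicate technical point; one may instead replace $\widetilde{C}_b$ by its normalization to simplify this, at the cost of verifying that the degree remains $|W|$ on connected components.
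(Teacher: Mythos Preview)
Your overall strategy---factoring the multiplication-by-$|W|$ map on $\pi_0(\Bun_{J_b^o}(C_b))$ through a torsion-free group related to $\Bun_T(\widetilde{C}_b)$---is exactly what the paper does. But the paper's implementation is different and bypasses precisely the ``delicate technical point'' you flag.

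Rather than pulling back along $\pi$ and invoking a restriction/corestriction formalism (whose existence for finite flat, non-\'etale $\pi$ with coefficients in an arbitrary smooth commutative group scheme is not entirely standard), the paper stays on $C_b$. Using Ng\^o's identification $J_b^o = \bigl(\pi_*(T\times\widetilde{C})\bigr)^{W,o}$, it observes that the $W$-norm $s \mapsto \prod_{w\in W} w(s)$ defines a morphism $Nm: \pi_*(T\times\widetilde{C}) \to \pi_*(T\times\widetilde{C})^W$; since $\pi_*(T\times\widetilde{C})$ has connected fibers, $Nm$ lands in $J_b^o$. The composite $J_b^o \hookrightarrow \pi_*(T\times\widetilde{C}) \xrightarrow{Nm} J_b^o$ is the $|W|$-th power. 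Passing to $\pi_0$ of torsor stacks, the middle term is $\pi_0\bigl(\Bun_T(\widetilde{C})\bigr) \cong \mathrm{NS}(\widetilde{C}) \otimes X_*(T)$, which is torsion-free. Done.

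Your route, by contrast, requires showing that $\pi_0\bigl(\Bun_{\pi^*J_b^o}(\widetilde{C}_b)\bigr)$ is torsion-free. Your argument for this is incomplete: the map $\pi^*J_b^o \to T$ you construct need not be injective over the ramification (where $J_b$ has genuine unipotent pieces), and the appeal to Zariski's main theorem to control the resulting kernel/cokernel on component groups is not a recognizable argument. The paper avoids this entirely by never forming $\pi^*J_b^o$: it uses the group scheme $\pi_*T$ on $C_b$, whose torsor stack is literally $\Bun_T(\widetilde{C})$, so torsion-freeness is immediate from the N\'eron--Severi group of a proper curve.
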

\begin{proof}
    Let $\pi:\widetilde{C}\to C_b$ be the cameral curve associated to $b.$
    By \cite[Prop. 2.4.7]{ngo-lemme-fondamental}, we have a natural identification $J_b^o=\Big(\pi_*(T\times\widetilde{C})^W\Big)^o,$ the neutral component of the $W$-invariant part of the Weil restriction of the group scheme $T\times\widetilde{C}$ over $\widetilde{C}.$
    Let $Nm: \pi_*(T\times\widetilde{C})\to \pi_*(T\times\widetilde{C})^W$
    be the norm morphism sending a section $s$ to $Nm(s):=\prod_{w\in W} w(s).$
    Since $\wt{C}$ is finite flat over $C_b,$ \cite[Prop. A.5.11.(3)]{conrad2015pseudo} entails that $\pi_*(T\times\widetilde{C})$ has geometrically connected fibers over $C_b.$ Therefore, the morphism $Nm$ factors through the neutral component $J_b^o\hookrightarrow \pi_*(T\times\widetilde{C})^W.$
    The composition $J_b^o\hookrightarrow \pi_*(T\times\widetilde{C})\xrightarrow{Nm} J_b^o$
    coincides with the $|W|$-th power map on $J_b^o.$
    By taking stacks of torsors and connected components, we obtain a factorization
    \[|W|: \pi_0(\Bun_{J_b^o}(C_b)) \to \pi_0(\Bun_{\pi_*(T\times\widetilde{C})}(C_b))\to \pi_0(\Bun_{J_b^o}(C_b)).\]
    By \cite[\S 9.2, Cor 14]{blr-neron}, the N\'eron-Severi group of a proper curve over an algebraically closed field is torsion free. Therefore, $\pi_0$ of
     $\Bun_{\pi_*(T\times\widetilde{C})}(C_b) \cong \Bun_{T}(\widetilde{C}) \cong \mathcal{P}ic(\widetilde{C})\otimes_{\mathbb{Z}} X_*(T)$ is torsion free. Hence, the factorization above implies that $|W|$ kills the torsion of $\pi_0(\Bun_{J_b^o}(C)),$ as desired.
\end{proof}

\begin{prop}
    \label{lemma: the norm}
    For any geometric point $b'$ of $A( G,\omega_{C'}),$ there is a norm morphism $Nm: \stP( \omega_C^{\otimes p})_{b^p} \to \stP( \omega_{C'})_{b'}$ such that the following composition is isomorphic to the $p$-th power map:
    \[\stP( \omega_{C'})_{b'}\xrightarrow{Fr^*} \stP( \omega_C^{\otimes p})_{b^p} \xrightarrow{Nm}\stP( \omega_{C'})_{b'}.\] 
\end{prop}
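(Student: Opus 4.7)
The strategy is to construct $Nm$ as the classical norm morphism for torsors along a finite locally free cover, applied to the relative Frobenius $\pi\colon C_{b^p} \to C'_{b'}$, which is finite locally free of constant degree $p$. Under the identifications $\stP(C,\omega_C^{\otimes p})_{b^p} = \Bun_{J_{b^p}}(C_{b^p})$ and $\stP(C',\omega_{C'})_{b'} = \Bun_{J_{b'}}(C'_{b'})$, together with the canonical isomorphism $J_{b^p} \cong \pi^* J_{b'}$, the task reduces to constructing a norm functor from $\Bun_{\pi^*H}(C_{b^p})$ to $\Bun_H(C'_{b'})$ for $H := J_{b'}$, a smooth commutative affine group scheme on $C'_{b'}$, whose composition with $\pi^*$ realizes the $p$-th power map.

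The first step is to produce the norm homomorphism $N\colon \pi_*\pi^*H \to H$ at the level of group schemes. Since $\pi$ is finite locally free, $\pi_*\pi^*H$ is canonically the Weil restriction $\operatorname{Res}_{C_{b^p}/C'_{b'}}(\pi^*H)$. Working \'etale-locally on $C'_{b'}$ where $\pi_*\cO_{C_{b^p}}$ is free of rank $p$, a section of $\pi^*H$ over $C_{b^p}$ corresponds to a $p$-tuple $(h_1, \ldots, h_p)$ of sections of $H$, and $N$ sends it to the product $h_1 \cdots h_p$ in the commutative group $H$. The crucial property, immediate from this description, is that the composite
\[
H \xrightarrow{\mathrm{unit}} \pi_*\pi^*H \xrightarrow{N} H
\]
equals the $p$-th power map $[p]_H$, because a section $h$ of $H$ pulls back to the constant tuple $(h, h, \ldots, h)$.

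The second step promotes $N$ to a norm on torsors by setting
\[
Nm(E) := (\pi_* E) \times^{\pi_*\pi^*H,\, N} H
\]
for any $\pi^*H$-torsor $E$ on $C_{b^p}$. Here $\pi_* E$ is a $\pi_*\pi^*H$-torsor on $C'_{b'}$: since $\pi$ is finite flat and $E \to C_{b^p}$ is smooth, $E$ trivializes fppf-locally on $C'_{b'}$, and $\pi_*E$ becomes the pushforward of the trivial torsor, namely $\pi_*\pi^*H$. This construction is functorial in $E$ and yields the required morphism of stacks $Nm\colon \stP(C,\omega_C^{\otimes p})_{b^p} \to \stP(C',\omega_{C'})_{b'}$.

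Finally, to verify $Nm \circ Fr^* \cong [p]$, for any $J_{b'}$-torsor $F'$ on $C'_{b'}$ the projection formula for the affine morphism $\pi$ yields a canonical identification $\pi_*(\pi^*F') \cong F' \times^{J_{b'},\,\mathrm{unit}} \pi_*\pi^*J_{b'}$; contracting further by $N$ and using the key identity $N \circ \mathrm{unit} = [p]$ gives $Nm(\pi^*F') \cong F' \times^{J_{b'},\,[p]} J_{b'} \cong (F')^p$. The main subtlety to verify carefully is that the pushforward of a torsor under a smooth commutative affine group scheme along a finite flat morphism is genuinely a torsor (rather than a mere pseudo-torsor) under the Weil restriction; this is standard, but deserves care because $J_{b'}$ is neither a torus nor a finite group scheme, so one should trivialize fppf-locally and cite a suitable descent statement for smooth affine group schemes.
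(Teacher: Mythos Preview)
Your approach has a genuine gap in the construction of the norm homomorphism $N\colon \pi_*\pi^*H \to H$. You write that ``working \'etale-locally on $C'_{b'}$ where $\pi_*\cO_{C_{b^p}}$ is free of rank $p$, a section of $\pi^*H$ over $C_{b^p}$ corresponds to a $p$-tuple $(h_1,\ldots,h_p)$ of sections of $H$'', and then define $N$ by multiplying the entries. But freeness of $\pi_*\cO_{C_{b^p}}$ as an $\cO_{C'_{b'}}$-\emph{module} does not mean the algebra splits: the relative Frobenius is purely inseparable, so locally $\cO_{C_{b^p}} \cong \cO_{C'_{b'}}[x]/(x^p - t)$ for a local parameter $t$, and this is \emph{never} isomorphic to $\cO_{C'_{b'}}^{\,p}$ as an algebra, not \'etale-locally and not even fppf-locally. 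Hence an $A$-point of $H$ (with $A$ this local Artin algebra) is not a $p$-tuple of $\cO_{C'_{b'}}$-points, and your formula for $N$ is undefined. The classical norm for $\mathbb{G}_m$ along a finite locally free map survives in the inseparable case precisely because it is built from the determinant of multiplication on the module $\pi_*\cO$, not from a splitting; there is no analogous determinant construction for a general smooth commutative affine $H$ such as $J_{b'}$.

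The paper's proof avoids this problem by passing to the cameral description: an object of $\stP(C,L)_b$ is a $T$-bundle on the cameral curve $\widetilde{C}$ with Weyl-equivariance data $\gamma_w$ and ramification constraints $c_\alpha$. Since $T \cong \mathbb{G}_m^r$, the norm for line bundles along the finite flat map $\widetilde{C}^p \to \widetilde{C}'$ (which \emph{is} defined via determinants, cf.\ \cite[\href{https://stacks.math.columbia.edu/tag/0BD2}{Tag 0BD2}]{stacks-project}) gives a norm on $T$-bundles; the bulk of the paper's argument then verifies that this norm is compatible with the extra structures $\gamma_w$ and $c_\alpha$. Your projection-formula computation of $Nm\circ Fr^* \cong [p]$ is fine once $Nm$ exists, so the fix is to either supply a genuine construction of $N$ for $J_{b'}$ (which will likely force you through the cameral picture anyway) or follow the paper's reduction to tori.
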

\begin{proof}
    Without loss of generality, we can change the ground field and assume that $b'$ is a $k$-point. We start by proving (1). Let $\wt{C}' \to C'$ be the cameral curve associated with $b'.$
    Let $\wt{C}^p$ be the base change $\wt{C}'\times_{C'} C,$ which is also the cameral curve over $C$ associated with $b^p.$
    Since $C\to C'$ is finite flat, so is $\wt{C}^p\to \wt{C}'.$
    By \cite[\href{https://stacks.math.columbia.edu/tag/0BD2}{Tag 0BD2}, \href{https://stacks.math.columbia.edu/tag/0BCY}{Tag 0BCY}]{stacks-project}, there is a norm morphism on the stack of $T$-bundles
    $Nm: \Bun_T(\wt{C}^p)\to \Bun_T(\wt{C}')$
    such that the composition 
    \[\Bun_T(\wt{C}')\xrightarrow{Fr^*} \Bun_T(\wt{C}^p)\xrightarrow{Nm} \Bun_T(\wt{C}')\]
    is given by taking the $p$-th power. 

    By \cite[Lem. 3.1.3]{chzh17}, for any line bundle $L$ and point $b \in A(G,L)(k)$, the stack $\stP(L)_{b}$ is identified with a stack of  $T$-bundles on the corresponding cameral curve $\wt{C}$ with some extra structures.
    Namely, an object in $\stP(L)_{b}$ corresponds to a tuple of data $(E_T,\{\gamma_w|w\in W\}, \{c_{\alpha}|\alpha\in \Phi\}),$ where 
   
    \noindent $\bullet$ $E_T$ is a $T$ bundle on $\wt{C};$
    
    \noindent $\bullet$ for each $w\in W,$ we have an isomorphism $\gamma_w$ of $T$-bundles:
    \[w(E_T):=(\wt{C}\times_{w^{-1},\wt{C}} E_T)\times^{T,w} T\xrightarrow[\gamma_w]{\sim} E_T.\]
    These $\gamma_w$'s are compatible in the natural way.

    \noindent $\bullet$ for each root $\alpha\in \Phi,$ we have a trivialization $c_{\alpha}$ of the associated $\mathbb{G}_m$-torsor $(E_T|_{\wt{C}_{\alpha}})\times^{T,\alpha}\bG_m,$ where $\wt{C}_{\alpha}$ is the fixed point subscheme of $\wt{C}$ under $s_{\alpha}.$
  
    It suffices to show that the morphism $Nm$ above is compatible with the extra structures. This amounts to showing that for any given object $(E_T,\{\gamma_w|w\in W\},\{c_{\alpha}|\alpha\in \Phi\})$ in $\stP(\omega_{C}^{\otimes p})_{b^p},$ there is a canonical way to endow $Nm(E_T)$ with the data of $\gamma_w$'s and $c_{\alpha}$'s. 
It suffices to show the following three claims: 

    \noindent \textbf{Claim A.}
        There is a canonical isomorphism $w(Nm(E_T))\cong Nm(w(E_T));$
        
    \noindent \textbf{Claim B.} 
        The norm of the trivial $T$-torsor is the trivial $T$-torsor;
    
    \noindent \textbf{Claim C.}
        For every character $\alpha\in X^*(T),$ there is a canonical isomorphism \[Nm(E_T)\times^{T,\alpha}\bG_m\cong Nm(E_T\times^{T,\alpha}\bG_m).\]
    
    Claim B follows from \cite[\href{https://stacks.math.columbia.edu/tag/0BCY}{Tag 0BCY}]{stacks-project}. Claim A in turn follows from the following two subclaims:
    \begin{enumerate}
        \item[A1] There is a canonical isomorphism $Nm(E_T\times^{T,w} T)\cong Nm(E_T)\times^{T,w} T.$ 
        \item[A2] There is a canonical isomorphism $Nm(\wt{C}^p\times_{w,\wt{C}^p} E_T)\cong \wt{C}'\times_{w,\wt{C}'} Nm(E_T).$ 
    \end{enumerate}
(A1): Fix an isomorphism $T\cong \bG_m^{\oplus r}$. Let us show the more general fact that, for any automorphism $x\in GL(r,\bZ)=Aut(T)$ we have $Nm(E_T\times^{T,x} T)\cong Nm(E_T)\times^{T,x} T.$
Using $T\cong \bG_m^{\oplus r},$ we can write $E_T$ as a direct sum of line bundles $\bigoplus_{i=1}^r L_i.$ 
Consider any matrix $A:=(a_{ij})\in GL(r,\bZ).$ We have that 
\[E_T\times^{T,A}T=\bigoplus_{i=1}^r \bigotimes_{j=1}^r L_j^{\otimes a_{ij}}.\]
The desired fact then follows from the multiplicativity of $Nm.$

\noindent (A2): 
For each $w\in W,$ we have a Cartesian diagram of cameral curves:
\[
\xymatrix{
\wt{C}^p\ar[r]^-{w} \ar[d] & \wt{C}^p\ar[d]\\
\wt{C}' \ar[r]_-{w}& \wt{C}'.
}
\]
(A2) then follows from the fact that the Norm map on line bundles is compatible with base change \cite[\href{https://stacks.math.columbia.edu/tag/0BD2}{Tag 0BD2}]{stacks-project}.

The proof of Claim C is similar to the proof of (A1). Namely, for each character $\alpha,$ and $T$-torsor $\bigoplus_{n=1}^r L_n.$ The line bundle $E_T\times^{T,\alpha}\bG_m$ is given by $\bigotimes_{i=1}^r L_i^{\otimes n_i}$ for some $n_i\in\bZ.$
The desired canonical isomorphism again follows from the multiplicativity of $Nm.$
\end{proof}

\begin{coroll} \label{coroll: injectivity frobenius pullback}
    For any geometric point $b'$ of $A( G,\omega_{C'}),$ the morphism induced by Frobenius pullback $Fr^*: \pi_0(\stP( \omega_{C'})_{b'}) \to \pi_0(\stP( \omega_C^{\otimes p})_{b^p})$ is an injection of groups of connected components.
\end{coroll}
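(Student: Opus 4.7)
The plan is to deduce the corollary directly from the two main results established immediately before it: the existence of the norm map (\Cref{lemma: the norm}) and the $p$-torsion-freeness of $\pi_0(\stP)$ (\Cref{prop: p torsion prop}).

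First, I would apply the functor $\pi_0(-)$ to the composition
\[
\stP(C',\omega_{C'})_{b'} \xrightarrow{Fr^*} \stP(C,\omega_C^{\otimes p})_{b^p} \xrightarrow{Nm} \stP(C',\omega_{C'})_{b'}
\]
from \Cref{lemma: the norm}. Since $Nm \circ Fr^*$ is the $p$-th power map on the source, the induced composition on component groups
\[
\pi_0(\stP(C',\omega_{C'})_{b'}) \xrightarrow{Fr^*} \pi_0(\stP(C,\omega_C^{\otimes p})_{b^p}) \xrightarrow{Nm} \pi_0(\stP(C',\omega_{C'})_{b'})
\]
is multiplication by $p$ on the abelian group $\pi_0(\stP(C',\omega_{C'})_{b'})$.

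Next, I would invoke \Cref{prop: p torsion prop}, which uses the standing assumption $p \nmid |W|$ to conclude that $\pi_0(\stP(C',\omega_{C'})_{b'})$ is $p$-torsion free. Hence multiplication by $p$ is an injective endomorphism of this abelian group. Since an injective composition forces the first map to be injective, it follows that $Fr^*$ itself is injective on $\pi_0$, as desired. There is no real obstacle here; the work has already been done in the two previous statements, and this corollary is the formal consequence that will later be needed to compare the sheaves of components on the Dolbeault and de Rham sides.
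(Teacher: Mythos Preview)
Your proof is correct and matches the paper's own argument essentially verbatim: both apply $\pi_0$ to the factorization $Nm \circ Fr^* \simeq [p]$ from \Cref{lemma: the norm}, invoke the $p$-torsion-freeness from \Cref{prop: p torsion prop} to conclude that multiplication by $p$ is injective, and then deduce injectivity of the first factor $Fr^*$.
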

\begin{proof}
    By \Cref{lemma: the norm}, the composition $
        \pi_0(\stP( \omega_{C'})_{b'})\xrightarrow{Fr^*} \pi_0(\stP(\omega_C^{\otimes p})_{b^p})\xrightarrow{\pi_0(Nm)} \pi_0(\stP( \omega_{C'})_{b'})$    coincides with multiplication by $p.$ 
    By \Cref{prop: p torsion prop}, we have that $\pi_0(\stP( \omega_{C'})_{b'})$ is $p$-torsion free. 
    Therefore, the composition is injective, and hence the same holds for $Fr^*$.
\end{proof}

 \subsection{Very good \texorpdfstring{$G$}{G}-splittings and components of \texorpdfstring{$\stsp(G)$}{H(C,G)}}\label{sec: vgsp}\;

There is a morphism $forget: \stsp(G)\to \cM_{dR}(C, J^p) \to  \stP(\omega_{C}^{\otimes p})$ given by forgetting the connections.
\begin{defn}[Very Good $G$-Splittings]
    The stack of very good $G$-splittings $\stsp^o(G)$ is the $A(G,\omega_{C'})$-stack defined as the fiber product $\stP^o(\omega_{C}^{\otimes p})\times_{\stP(\omega_C^{\otimes p})} \stsp(G)$.
\end{defn}

When $G=GL_n,$ the stack $\stsp^o( \GL_n)$ coincides with the stack of very good splittings in \cite[\S3.3]{dCGZ}.

    The rest of the subsection is dedicated to the proof of \Cref{lemma: constant pi0}, which will be needed to prove that the stack of very good $G$-splittings is a torsor. We approach this by considering the sheaf $\pi_0(\stsp(G))$ of connected components. The following lemma shows that $\pi_0(\stsp(G))$ makes sense as a sheaf of sets.
\begin{lemma}
\label{lemma: torsor quot}
    Let $Q$ be a Picard full subcategory of a Picard category $X.$
    Let $H$ be a groupoid on which $X$ acts simply transitively as defined in \cite[\S3.6]{donagi2002gerbe}.
    The quotient 2-category $H/Q$ is equivalent to a set $\wt{H/Q}.$
    Furthermore, the action of $X$ on $Q$ induces a simply transitive action of the abelian group $\wt{X/Q}$ (as in \Cref{lemma: quot sub pic}) on $\wt{H/Q}.$
\end{lemma}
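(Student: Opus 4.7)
The plan is to reduce the statement to the Picard subcategory setting of \Cref{lemma: quot sub pic} by choosing a basepoint in $H$. Specifically, I would fix any object $h_0 \in H$. The hypothesis that $X$ acts simply transitively on $H$ is essentially the statement that the functor $\phi_{h_0}: X \to H$ given by $x \mapsto x \cdot h_0$ is an equivalence of groupoids (this is how simple transitivity of a Picard category action is phrased in \cite[\S3.6]{donagi2002gerbe}).

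For the first claim that $H/Q$ is equivalent to a set, I would observe that $\phi_{h_0}$ is $Q$-equivariant with respect to the natural $Q$-actions: namely, $Q$ acts on $X$ by Picard multiplication via the inclusion $Q \hookrightarrow X$, and on $H$ by restricting the $X$-action. Passing to 2-categorical quotients, we obtain an equivalence $X/Q \xrightarrow{\sim} H/Q$. By \Cref{lemma: quot sub pic}, the source $X/Q$ is equivalent to a set $\wt{X/Q}$, so $H/Q$ is equivalent to a set $\wt{H/Q}$, and $\phi_{h_0}$ descends to a bijection $\wt{X/Q} \xrightarrow{\sim} \wt{H/Q}$. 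Alternatively, one can apply \Cref{lemma: quot set} directly: the criterion there translates via $\phi_{h_0}$ into the bijectivity of $\mathrm{act}(-, 1_x): \Hom_Q(1_Q, q) \to \Hom_X(x, q x)$ in the Picard category $X$, which is exactly what \Cref{lemma: quot sub pic} verifies.

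For the simply transitive action claim, I would note that since $X$ is Picard (hence symmetric monoidal), the $X$-action on $H$ commutes, up to coherent natural isomorphism, with the $Q$-action (which is just its restriction). Consequently, the $X$-action descends to an action of $\wt{X/Q}$ on $\wt{H/Q}$. To check simple transitivity, I would transport this action along the bijection $\phi_{h_0}: \wt{X/Q} \xrightarrow{\sim} \wt{H/Q}$ constructed above: under this identification, the $\wt{X/Q}$-action on $\wt{H/Q}$ corresponds to the translation action of $\wt{X/Q}$ on itself, which is manifestly simply transitive. In particular, well-definedness of the action and uniqueness of translators follow from the corresponding properties for $X$ acting on itself, which in turn follow from Picard commutativity.

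The argument is essentially formal once one unpacks the definitions, and I do not expect any substantial obstacle. The one point requiring mild care is verifying that $\phi_{h_0}$ intertwines the $Q$-actions compatibly with the symmetric monoidal coherence isomorphisms, but this is a standard coherence check that uses nothing beyond the fact that $X$ is Picard and $Q \subseteq X$ is a full Picard subcategory.
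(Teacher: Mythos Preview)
Your proposal is correct. Your primary approach differs from the paper's, though you also mention the paper's method as an alternative. The paper proceeds by directly checking the criterion of \Cref{lemma: quot set} for $H$ (noting that the bijectivity of $\operatorname{act}(-,1_h): \Hom_Q(1_Q,q)\to\Hom_H(h,qh)$ is exactly part of what ``simply transitive'' means in \cite[\S3.6]{donagi2002gerbe}), then establishes transitivity of the induced action as an immediate consequence of transitivity of $X$ on $H$, and freeness by a direct lifting argument: if $\wt{x}$ fixes $\wt{h}$, lift to a morphism $qh\to xh$ in $H$, and use simple transitivity to produce a morphism $q\to x$ in $X$, forcing $\wt{x}$ trivial. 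Your approach instead fixes a basepoint $h_0\in H$, uses the resulting equivalence $\phi_{h_0}:X\xrightarrow{\sim}H$ to transport the entire problem back to $X/Q$, and then invokes \Cref{lemma: quot sub pic} together with the fact that the group $\wt{X/Q}$ acts simply transitively on itself by translation. Your route is more structural and makes the dependence on \Cref{lemma: quot sub pic} transparent; the paper's route is more self-contained and avoids having to verify the coherence of the $Q$-equivariance of $\phi_{h_0}$ (though, as you note, that check is routine). Both are short and essentially formal.
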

\begin{proof}
    The criterion \Cref{lemma: quot set} is satisfied by the definition of a simply transitive action in \cite[\S3.6]{donagi2002gerbe}.
    The first statement then follows.
    The transitivity of the action of $\wt{X/Q}$ on $\wt{H/Q}$ follows immediately from that of $X$ on $H.$ It remains to show the freeness.
    An element $\wt{h}$ in $\wt{H/Q}$ represents an isomorphism class of objects in $H/Q.$
    By the definition of 2-categorical quotients, two objects $h_1$ and $h_2$ are equivalent in $H/Q$ if there is an object $q$ and a morphism $qh_1\to h_2.$
    Let $\wt{x}$ be an element of $\wt{X/Q}$ that fixes $\wt{h}.$
    Then $\wt{x}$ (resp. $\wt{h}$) admits a lift to an object $x$ (resp. $h$) in $X/Q$ (resp. $H/Q$) such that there is an object $q$ in $Q$ and a  morphism $qh\to xh.$
    Since $H$ is a torsor, we have that there is a morphism $q\to x$ in $X.$ Therefore, $\wt{x}$ is trivial as desired. 
\end{proof}

\begin{defn} \label{defn: group of components g-splittings}
    We define $\pi_0(\stsp(G))$ to be the sheaf of sets on the big \'etale site of $A(G,\omega_{C'})$ associated to the quotient functor $\stsp(G)/\stP^o(\omega_{C'})$ as in \Cref{lemma: torsor quot}.
\end{defn}

\begin{lemma} \label{lemma: properties of components of splittings} \;
    \begin{enumerate}
        \item $\pi_0(\stsp(G))$ is a torsor under $\pi_0(\stP(\omega_{C'}));$
        \item For any geometric point $b'$ of $A(G,\omega_{C'}),$ the restriction $\pi_0(\stsp(G))|_{b'}$ corresponds to the set of connected components of the smooth stack $\stsp(G)_{b'}$ over $b'$.
        \item  Let $U$ be a scheme over $A(G,L)$ and let $s,s' \in \pi_0(\stsp(G))(U)$. Then $s = s'$ if and only if for all geometric points $b$ of $U$, we have equality of pullbacks $s|_{b} = s'|_{b}$.
    \end{enumerate}
\end{lemma}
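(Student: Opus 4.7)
The plan is to prove the three parts in turn, leveraging the torsor structure of $\stsp(C,G)$ under $\stP(C',\omega_{C'})$ established in Lemma \ref{cztech}(2) and transferring properties already proved for $\pi_0(\stP(C',\omega_{C'}))$.

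For part (1), I will apply Lemma \ref{lemma: torsor quot} to the Picard pair $\stP^o(C',\omega_{C'}) \subset \stP(C',\omega_{C'})$ acting on the torsor $\stsp(C,G)$. That lemma yields directly that the presheaf quotient $\stsp(C,G)/\stP^o(C',\omega_{C'})$ takes values in sets and carries a simply transitive action of $\stP(C',\omega_{C'})/\stP^o(C',\omega_{C'})$. Sheafifying, the source becomes $\pi_0(\stsp(C,G))$ by Definition \ref{defn: group of components g-splittings} and the target becomes $\pi_0(\stP(C',\omega_{C'}))$ by Definition \ref{defn: pi_0 P}, and the torsor property passes to the sheaves.

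For part (2), I fix a geometric point $b'$ of $A(C',G,\omega_{C'})$. The fiber $\stsp(C,G)_{b'}$ is smooth (Lemma \ref{cztech}(1)) and is a torsor under the smooth stack $\stP(C',\omega_{C'})_{b'}$. By Definition \ref{verygj}, the open substack $\stP^o(C',\omega_{C'})_{b'}$ is the neutral connected component. Each orbit of $\stP^o(C',\omega_{C'})_{b'}$ on $\stsp(C,G)_{b'}$ is therefore an open, connected substack: openness follows from the torsor property restricted to the open substack $\stP^o \subset \stP$, and connectedness follows because, by simple transitivity, each orbit is isomorphic to $\stP^o(C',\omega_{C'})_{b'}$ itself. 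These orbits partition $\stsp(C,G)_{b'}$, so they are exactly its connected components, and the stalk of $\pi_0(\stsp(C,G))$ at $b'$ from part (1) is therefore $\pi_0(\stsp(C,G)_{b'})$.

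For part (3), part (1) says $\pi_0(\stsp(C,G))$ is a torsor under $\pi_0(\stP(C',\omega_{C'}))$. After passing to an \'etale cover of $U$ trivializing the torsor, equality of $s$ and $s'$ becomes vanishing of the difference $s - s'$ in $\pi_0(\stP(C',\omega_{C'}))$, and the corresponding criterion is Lemma \ref{lemma: geometric points vs sheaf}; compatibility of the trivialization with restriction to geometric points is automatic. The main subtlety I anticipate is in part (2): verifying that each $\stP^o(C',\omega_{C'})_{b'}$-orbit is truly open and connected as a substack, not merely at the level of underlying topological spaces. Openness follows formally from the torsor property together with the openness of $\stP^o$ in $\stP$, while connectedness follows from Definition \ref{verygj} via Lemma \ref{brochard}, which by construction guarantees that the underlying space of $\stP^o(C',\omega_{C'})_{b'}$ is connected. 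The remaining verifications are bookkeeping.
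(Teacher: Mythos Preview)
Your proposal is correct and follows essentially the same approach as the paper: parts (1) and (2) are deduced from \Cref{lemma: torsor quot} together with the definition, and part (3) is reduced to \Cref{lemma: geometric points vs sheaf} via the torsor structure. Your argument for part (3) is in fact slightly cleaner than the paper's hint, since once you have the section $s$ itself, the torsor is already trivialized over $U$ and you can form $s'-s \in \pi_0(\stP(C',\omega_{C'}))(U)$ directly without passing to a further \'etale cover.
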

\begin{proof}
    Parts (1) and (2) are direct consequences of \Cref{defn: group of components g-splittings}, \Cref{cztech}(2) and \Cref{lemma: torsor quot}. Part (3) follows from a similar argument as in \Cref{lemma: geometric points vs sheaf}, using that $\stsp(G)$ is a $\stP(\omega_{C'})$-torsor.
\end{proof}

\begin{defn}
    Let $\stP(\omega_{C'})^p \to A(G,\omega_{C'})$ denote the smooth group stack defined by the fiber product $\stP(\omega_{C}^{\otimes p})\times_{A(\omega_{C}^{\otimes p}), Fr^*} A(\omega_{C'})$.
    We denote by $\stP^o(\omega_{C'})^p$ the corresponding open substack of neutral components in $\stP(\omega_{C'})^p$. 
    We denote by $\pi_0(\stP(\omega_{C'})^p)$ the sheaf of abelian groups in the big \'etale site of $A(G,\omega_{C'})$ associated to the quotient functor $\stP(\omega_{C'})^p/\stP^o(\omega_{C'})^p$.
\end{defn}

Frobenius pullback induces a homomorphism of group stacks $Fr^*: \stP(\omega_{C'}) \to \stP(\omega_{C'})^p$ over $A(G,\omega_{C'})$, which necessarily sends  $\stP^o(\omega_{C'})$ into $\stP^o(\omega_{C'})^p$. On the other hand, the forgetful morphism induces a map $\stsp(G) \to \stP(\omega_{C'})^p$ that is equivariant with respect to the actions of $\stP^o(\omega_{C'})$. Hence, the forgetful morphism induces a well-defined morphism of sheaves of quotients $\pi_0(forget):\pi_0(\stsp(G))\to \pi_0(\stP(\omega_{C'})^p)$.

\begin{prop}
\label{prop: forget is inj}
    The morphism $\pi_0(forget):\pi_0(\stsp(G))\to \pi_0(\stP(\omega_{C'})^p)$ of sheaves of sets on the Hitchin base $A(G,\omega_{C'})$ induced by the forgetful morphism is injective. 
\end{prop}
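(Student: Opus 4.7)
The plan is to combine the torsor structure on $\stsp(C,G)$ with the injectivity of Frobenius pullback on groups of connected components established in \Cref{coroll: injectivity frobenius pullback}. The key observation is that the forgetful morphism $\stsp(C,G)\to \stP(C',\omega_{C'})^p$ is $\stP(C',\omega_{C'})$-equivariant, where $\stP(C',\omega_{C'})$ acts on $\stP(C',\omega_{C'})^p$ through the homomorphism $Fr^*: \stP(C',\omega_{C'}) \to \stP(C',\omega_{C'})^p$ followed by translation (i.e., on the underlying $J_{b^p}$-torsor we twist by $Fr^*F$). This is immediate from the formula $F\cdot(E,\nabla) = (Fr^*F\times^{J_{b^p}} E,\nabla^{can}\otimes\nabla)$ defining the action of the Picard stack on $G$-splittings that was recalled after \Cref{cztech}. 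Passing to $\pi_0$, the induced map $\pi_0(forget)$ is equivariant for the action of $\pi_0(\stP(C',\omega_{C'}))$ on $\pi_0(\stP(C',\omega_{C'})^p)$ via the induced homomorphism $Fr^*: \pi_0(\stP(C',\omega_{C'})) \to \pi_0(\stP(C',\omega_{C'})^p)$.

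To prove injectivity, let $U$ be a scheme over $A(C',G,\omega_{C'})$ and let $s_1, s_2 \in \pi_0(\stsp(C,G))(U)$ satisfy $\pi_0(forget)(s_1) = \pi_0(forget)(s_2)$. Since $\pi_0(\stsp(C,G))$ is a torsor under $\pi_0(\stP(C',\omega_{C'}))$ by part (1) of \Cref{lemma: properties of components of splittings}, I may pass to an \'etale cover $U' \to U$ and find a section $F \in \pi_0(\stP(C',\omega_{C'}))(U')$ with $s_2|_{U'} = F \cdot s_1|_{U'}$. Equivariance then yields $Fr^*(F) \cdot \pi_0(forget)(s_1)|_{U'} = \pi_0(forget)(s_2)|_{U'} = \pi_0(forget)(s_1)|_{U'}$, which forces $Fr^*(F) = 0$ as an element of the abelian group $\pi_0(\stP(C',\omega_{C'})^p)(U')$.

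Next, I would argue that $Fr^*: \pi_0(\stP(C',\omega_{C'})) \to \pi_0(\stP(C',\omega_{C'})^p)$ is injective as a morphism of \'etale sheaves of abelian groups. Injectivity of a morphism of sheaves can be checked on geometric stalks, and at a geometric point $b'$ this is exactly the content of \Cref{coroll: injectivity frobenius pullback}. Consequently $F = 0$ on $U'$, so $s_1|_{U'} = s_2|_{U'}$, and the sheaf property of $\pi_0(\stsp(C,G))$ gives $s_1 = s_2$ on $U$.

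The only delicate point is verifying the equivariance claim and carefully book-keeping the action of $\pi_0(\stP(C',\omega_{C'}))$ on the two sides; the substantive geometric input, namely injectivity of $Fr^*$ on $\pi_0$ (ultimately coming from the $p$-torsion freeness of $\pi_0(\stP(C',\omega_{C'})_{b'})$ proved in \Cref{prop: p torsion prop} together with the norm construction of \Cref{lemma: the norm}), is already in hand.
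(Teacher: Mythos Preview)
Your proof is correct and takes essentially the same approach as the paper's. The paper reduces first to geometric points via \Cref{lemma: properties of components of splittings}(3) and \Cref{lemma: geometric points vs sheaf} and then argues with explicit objects $(E,\nabla),(E',\nabla')$ and the torsor $F$ relating them, whereas you phrase the identical argument more abstractly at the level of sheaves; the key inputs---equivariance of the forgetful map, the torsor structure, and the injectivity of $Fr^*$ from \Cref{coroll: injectivity frobenius pullback}---are the same.
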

\begin{proof}
    By \Cref{lemma: properties of components of splittings}(3) and \Cref{lemma: geometric points vs sheaf}, we can reduce to a statement of restrictions to geometric points of $A(G,\omega_{C'})$. Choose a geometric point $b'$ of $A(G, \omega_{C'})$. Let $(E,\nabla)$ and $(E',\nabla')$ be two objects in $\stsp(G)_{b'}$ 
    such that $E$ and $E'$ lie in the same connected component of $\stP(\omega_{C}^{\otimes p})_{b^p}.$
    By \Cref{cztech}(2), there is a unique (up to isomorphism) object $F$ in $\stP(\omega_{C'})_{b'}$ that sends $(E,\nabla)$ to $(E',\nabla')$. We are done if we can show that $F$ is in $\stP^o(  \omega_{C'})_{b'}.$
    However, twisting by the inverse object $E^{-1}$ of $E,$ we have an isomorphism $Fr^*F\cong E'\times^{J_{b^p}} E^{-1}$ in $\stP^o(\omega_C^{\otimes p})_{b^p}.$ By the injectivity of Frobenius pullback on connected components (\Cref{coroll: injectivity frobenius pullback}(2)), we conclude that $F$ is in $\stP^o( \omega_{C'})_{b'}$, as desired.
\end{proof}

\begin{lemma}
\label{lemma: constant pi0}
    The sheaf $\pi_0(\stsp(G))$ is constant over any $\bG_m$-orbit in the Hitchin base $A( G, \omega_{C'}).$ In particular, the image of $\stsp^o(G) \to A(G,\omega_{C'})$ is preserved by the $\mathbb{G}_m$-action on the Hitchin base.
\end{lemma}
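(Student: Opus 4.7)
The plan is to lift the $\bG_m$-action from the Hitchin base to the stack $\stsp(C,G)$ itself, and to combine this with the injection $\pi_0(forget)$ from \Cref{prop: forget is inj}. The key observation is that the $\bG_m$-action on $\fc$ is equivariant with the centralizer group scheme $J\to\fc$ and with the tautological section $\tau:\fc\to Lie(J)$. This is enough to propagate a $\bG_m$-action to each of $\stP(C,\omega_{C}^{\otimes p})$, $\stP(C',\omega_{C'})^p$, $\stP(C',\omega_{C'})$, and $\stsp(C,G)$, covering the natural $\bG_m$-actions on their Hitchin bases, and in a manner compatible with the forgetful morphism and with $Fr^*$.

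First, I would verify that for any line bundle $L$, the $\bG_m$-action on $\stP(C,L)$ acts on each geometric fiber by a group stack isomorphism $\stP(C,L)_{b}\xrightarrow{\sim}\stP(C,L)_{\lambda\cdot b}$. Any such isomorphism must send the neutral component to the neutral component, and hence induces an isomorphism on $\pi_0$; the same argument applies to $\pi_0(\stP(C',\omega_{C'})^p)$. Since each $\bG_m$-orbit in the Hitchin base is connected, and a $\bG_m$-equivariant sheaf of sets on a connected $\bG_m$-orbit is automatically constant, it follows that $\pi_0(\stP(C',\omega_{C'})^p)$ restricts to a constant sheaf on any $\bG_m$-orbit in $A(C',G,\omega_{C'})$.

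Second, the forgetful morphism, and hence the induced map of sheaves $\pi_0(forget):\pi_0(\stsp(C,G))\to \pi_0(\stP(C',\omega_{C'})^p)$, is $\bG_m$-equivariant. By \Cref{prop: forget is inj} it is injective, so $\pi_0(\stsp(C,G))$ is a $\bG_m$-equivariant subsheaf of a sheaf whose restriction to each orbit is constant. The same constancy statement for $\bG_m$-equivariant sheaves on connected orbits then shows that $\pi_0(\stsp(C,G))$ itself is constant over any $\bG_m$-orbit.

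Third, for the \emph{in particular} assertion, a geometric point $b'\in A(C',G,\omega_{C'})$ lies in the image of $\stsp^o(C,G)\to A(C',G,\omega_{C'})$ precisely when the image of $\pi_0(forget)$ at $b'$ contains the identity element of the abelian group $\pi_0(\stP(C',\omega_{C'})^p)_{b'}=\pi_0(\stP(C,\omega_{C}^{\otimes p})_{b^p})$. Because $\bG_m$ acts on $\pi_0(\stP(C',\omega_{C'})^p)$ by group homomorphisms, it preserves the identity, so the condition ``the image contains $0$'' is invariant along each $\bG_m$-orbit. The main obstacle I expect is the careful verification that the $\bG_m$-action lifts to $\stsp(C,G)$ compatibly with all the ingredients in \Cref{defn: j-dr}, particularly the $p$-curvature morphism and the Cartier connection; this is routine but somewhat technical, amounting to a diagram chase using the $\bG_m$-equivariance of each building block.
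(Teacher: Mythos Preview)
Your approach is correct and runs largely parallel to the paper's, with one key difference. The paper does \emph{not} lift the $\bG_m$-action to $\stsp(C,G)$; instead, it only uses that $\stP(C',\omega_{C'})$ and $\stP(C',\omega_{C'})^p$ are $\bG_m$-equivariant over $A(C',G,\omega_{C'})$, which is immediate since neither involves connections. This gives constancy of both $\pi_0(\stP(C',\omega_{C'}))$ and $\pi_0(\stP(C',\omega_{C'})^p)$ over each orbit. The paper then invokes the torsor property from \Cref{lemma: properties of components of splittings}(1): a torsor under a constant group sheaf that embeds (via $\pi_0(forget)$) into a constant sheaf must land in a single coset, hence is itself constant. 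Your route replaces this use of the torsor structure with a direct $\bG_m$-equivariance of $\pi_0(\stsp(C,G))$, at the cost of the compatibility check you flag; that check is indeed routine, the essential point being that the $\bG_m$-action on $J_L\to\fc_L$ is trivial on fibers, so the induced identification $J_{b^p}\cong J_{(\lambda\cdot b)^p}$ arises as the Frobenius pullback of $J_{b'}\cong J_{\lambda\cdot b'}$ and therefore intertwines the Cartier connections. The paper's argument is marginally cleaner because the torsor property is already available, whereas yours has the merit of making the $\bG_m$-equivariance of $\stsp(C,G)$ explicit. For the ``in particular,'' both arguments coincide: one tracks the zero element of $\pi_0(\stP(C',\omega_{C'})^p)$ along the orbit and observes it is preserved.
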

\begin{proof}
 By \Cref{lemma: properties of components of splittings} and \Cref{prop: forget is inj}, we have that $\pi_0(\stsp(G))$ is a $\pi_0(\stP(\omega_{C'}))$-torsor which is also a subsheaf of $\pi_0(\stP(\omega_{C'})^p)).$ Note that for any line bundle $L$ we have that $\mathcal{P}(L) \to A(G,L)$ is
$\mathbb{G}_m$-equivariant by the $\mathbb{G}_m$-equivariance of $J_L$. In particular $\stP(\omega_{C'}) \to A(G,\omega_{C'})$ and $\stP(\omega_{C'})^p \to A(G,\omega_{C'})$ are $\bG_m$-equivariant, and hence we have that over any $\bG_m$-orbit of $A(G,\omega_{C'}),$ both $\pi_0(\stP(\omega_{C'}))$ and $\pi_0(\stP(\omega_{C'})^p)$ are constant.
    It follows that $\pi_0(\stsp(G))$ has to be also constant over the $\bG_m$-orbit. 
    
    
\end{proof}

\subsection{The stack of very good \texorpdfstring{$G$}{G}-splittings is a torsor.} \label{subsection: torsor}\quad

Recall that $h(G)$ is defined to be the Coxeter number of $G$ as in \cite[\S5.1]{serre2005complete}.

\begin{prop}[Smooth pseudo-torsor]\label{surj}
    The $A(G,\omega_{C'})$-stack $\stsp^o(G)$ is an open substack of $\stsp(G)$ that is smooth over $A(G,\omega_{C'})$.
    If $p\geq h(G)$, then the $A(G,\omega_{C'})$-stack $\stsp^o(G)$ is a torsor under the Picard stack $\stP^o(\omega_{C'}).$
\end{prop}
\begin{proof}
Openness follows from the openness of $\stP^o(\omega_{C}^{\otimes p})$ in $\stP(\omega_{C}^{\otimes p}).$
    Smoothness follows from \Cref{cztech}(1).
    
    We now show that $\stsp^o(G)$ is pseudo-torsor under $\stP^o(\omega_{C'})$. Let $(E,\nabla)$ and $(E',\nabla')$ be two objects in $\stsp^o(G)_{b'}$ for some geometric point $b'$ of $A(G, \omega_{C'}).$
    By \Cref{cztech}(2), there is a unique (up to isomorphism) object $F$ in $\stP^o( \omega_{C'})$ such that $F\cdot (E,\nabla)\cong (E',\nabla').$
    In particular we have that $E'\cong Fr^*F\times^{J_{b^p}} E.$
    Twisting by the inverse object of $E,$ we are reduced to showing that Frobenius pullback induces an injection $Fr^*: \pi_0(\stP(  \omega_{C'})_{b'})\to \pi_0(\stP( \omega_C^{\otimes p})_{b^p})$, which is the content of \Cref{coroll: injectivity frobenius pullback}.

     It remains to show that the morphism $\stsp^o(G)\to A(G,\omega_{C'})$ is surjective. 
     Since $\stsp^o(G)\to A(G,\omega_C')$ is smooth, it follows that the image of $\stsp^o(G)$ is open. This image is $\mathbb{G}_m$-equivariant by \Cref{lemma: constant pi0}. Furthermore, it contains the origin $0'$ of the Hitchin base by \Cref{lemma: 0-fiber G} proven below. Hence, the image must be the whole $A(G,\omega_{C'})$, as desired.
\end{proof}

The rest of this subsection is devoted to showing the necessary \Cref{lemma: 0-fiber G} in the proof of \Cref{surj}.
In the following, we denote by $0'$ and $0^p$ the origins of the Hitchin bases $A( G,\omega_{C'})$ and $A( G, \omega_{C}^{\otimes p})$ respectively.

\begin{lemma}
\label{sl2}
    The fiber category $\stsp^o(SL_2)_{0'}$ is non-empty.
\end{lemma}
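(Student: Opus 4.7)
The plan is to construct an explicit very good $SL_2$-splitting at $0'$ directly, by exhibiting a flat connection $\nabla_0$ on the trivial $J_{0^p}$-torsor whose $p$-curvature descends to the tautological section $\tau(C',\omega_{C'})(0')$. Triviality of the underlying torsor will then automatically imply very goodness.

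Under our hypothesis $p\neq 2$, the regular centralizer at the origin for $SL_2$ is $J_0 = Z_{SL_2}(n_+) \cong \mu_2 \times \bG_a$, with the $\bG_a$-summand generated by $n_+$. The $\omega_C^{\otimes p}$-twist $J_{0^p}$ is therefore a smooth commutative group scheme over $C$ whose Lie algebra is supported on the $\bG_a$-summand and identifies with a line bundle $L$ on $C$ (a twist of $k\cdot n_+$). A flat connection on the trivial $J_{0^p}$-torsor is determined by a $1$-form in $H^0(C, L\otimes\omega_C)$, and the associated $p$-curvature is computed by the Cartier operator on $L$-valued forms.

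The main obstacle is the existence of a flat connection $\nabla_0$ on the trivial $J_{0^p}$-torsor whose $p$-curvature equals the section $Fr^*\tau(C',\omega_{C'})(0')$. This reduces to a surjectivity statement for the Cartier operator between appropriate sections of line bundles on $C$ and $C'$, which can be verified using the Cartier isomorphism together with the explicit description of $\tau(0')$ as arising from $\kappa(0)=n_+\in\mathrm{Lie}(J_0)$. Granted such a $\nabla_0$, the pair $(E_0,\nabla_0)$ with $E_0$ the trivial $J_{0^p}$-torsor is very good: the trivial torsor has trivial $\mu_2 = \pi_0(J_{0^p})$-reduction, placing it in the neutral component $\stP^o(C,\omega_C^{\otimes p})_{0^p}$ by the characterization inside the proof of \Cref{prop: p torsion prop}.

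An alternative, more abstract approach would exploit the Chen-Zhu isomorphism $\fC:\stsp\times^{\stP}\stHig\xrightarrow{\sim}\stdR$ by pairing a chosen flat $SL_2$-connection on $C$ (for instance the canonical $SL_2$-oper, or the trivial flat connection $(\cO_C^{\oplus 2},d)$, which both lie over $0'$ under $h_{dR}$) with a suitable Higgs bundle at $0'$ and verifying that the resulting $G$-splitting is very good. This would sidestep the direct cohomological construction of $\nabla_0$, but requires unwinding the Chen-Zhu formula $E_0\times^{J_{0^p},a}Fr^*F\cong E'$ in order to track the $\mu_2 = Z(SL_2)$-reduction of $E_0$; the difficulty is showing that this $\mu_2$-reduction can be arranged to be trivial, which is where the main technical work would reside in the indirect route.
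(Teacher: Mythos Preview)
Your main approach has a genuine gap. You want to equip the trivial $J_{0^p}$-torsor with a flat connection whose $p$-curvature descends to the nonzero section $\tau(0')$. But the space of flat connections on the trivial torsor is an affine space modeled on $H^0(C,\omega_C\otimes Lie(J_{0^p}))$, with basepoint the Cartier connection $\nabla^{can}$. Using the paper's identification $J_{SL_2,0^p}\cong Fr^*TC'\times_C\mu_2$, one has $Lie(J_{0^p})\cong Fr^*T_{C'}\cong\omega_C^{\otimes -p}$, so this space is $H^0(C,\omega_C^{\otimes(1-p)})$, which vanishes because $g(C)\geq 2$ and $p\geq 3$. Thus the \emph{only} flat connection on the trivial torsor is $\nabla^{can}$, whose $p$-curvature is zero. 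Since $\tau(0')\neq 0$ (it is induced by $\kappa(0)=n_+\neq 0$; the paper computes it to be $1$ in the identification $Lie(J_{0'})\otimes\omega_{C'}\cong\cO_{C'}$), the trivial torsor cannot carry a $G$-splitting at $0'$. The appeal to surjectivity of a Cartier-type operator is vacuous here because its source is zero.

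The paper's proof goes in a genuinely different direction: it produces a \emph{nontrivial} $J_{0^p}$-torsor carrying the correct connection. After choosing a $W_2(k)$-lift of $C$, the sheaf $N$ of local Frobenius lifts is an $Fr^*TC'$-torsor (Deligne--Illusie), and Ogus--Vologodsky theory endows it with a flat connection whose $p$-curvature is exactly $\tau(0')$. Crossing with $\mu_2$ yields an object of $\stsp(C,SL_2)_{0'}$. The very goodness is then argued not by triviality of the torsor but by exhibiting an explicit deformation to the trivial torsor: the ambient extension $0\to Fr^*TC'\times\mu_2\to\mathcal{E}\times\mu_2\to\bG_a\to 0$ realizes $N\times\mu_2$ as the fiber over $1\in\bG_a$, hence in the same connected component as the fiber over $0$, which is trivial. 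Your alternative route via inverting the Chen--Zhu isomorphism is not developed enough to assess, and as you yourself note, controlling the component of the resulting $J_{0^p}$-torsor would be the entire difficulty.
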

\begin{proof}
Let $TC' \times_{C'} C=: T^pC$.
    By matrix calculation, we have that $J_{PGL_2,0^p}\cong T^pC$ and that $J_{SL_2,0^p}\cong T^pC\times\mu_2,$ where $TC'$ is understood as the vector group scheme underlying the tangent bundle of $C'$ (see also \cite[Lem. 3.21 and above Rem. 3.24]{chen-zhu}).
    Given a $W_2(k)$-lift $C_1$ of $C,$ \cite[p.251 (c)]{deligne1987relevements} entails that the sheaf $N$ of Frobenius lifts of $C$ is an $T^pC$-torsor. Hence, it defines an extension of commutative group schemes
    \begin{equation}
    \label{eqn: ext}
        0\to T^pC\to \mathcal{E}\xrightarrow{\pi} \bG_a\to 0,
    \end{equation}
    where $N$ is identified with $\pi^{-1}(1).$
    It is shown in \cite[Thm. 4.5]{ov07} and reinterpreted in \cite[\S3.5]{chen-zhu} that (\ref{eqn: ext}) is indeed an extension of flat connections, where $T^pC$ and $\bG_a$ are given the Cartier connections. We denote by $\nabla^{\mathcal{E}}$ the connection on $\mathcal{E}$. Since 1 is a horizontal section in $\bG_a,$ it follows that $\nabla^{\mathcal{E}}$ restricts to a flat connection $\nabla^N$ on $N.$ We thus have a $J_{PGL_2,0^p}$-flat connection $(N,\nabla^N).$
    It is also shown in \cite[\S3.5]{chen-zhu} that $\pi|_N$ coincides with the $p$-curvature morphism
    \[\pi|_N=\psi(\nabla^N): N\to Lie(J_{PGL_2,0^p})\otimes \omega_C^{\otimes p}\cong \bG_a.\]
    A direct calculation shows that $\tau(0)=1$ for $PGL_2,$ thus $(N,\nabla^N)$ is an object in $\stsp( PGL_2)_{0'}.$

    In order to address the group $SL_2$, replace (\ref{eqn: ext}) with 
    \begin{equation}
    \label{eqn: etx2}
        0\to T^pC\times \mu_2 \to \mathcal{E}\times \mu_2\xrightarrow{\pi\circ pr_{\mathcal{E}}} \bG_a\to 0.
    \end{equation}
    It is still an extension of flat connections.
    The $J_{SL_2, 0^p}$-flat connection $(N\times\mu_2,\nabla^{N\times \mu_2})$ lies in $\stsp(SL_2)_{0'}.$ 
    Furthermore, (\ref{eqn: etx2}) defines a deformation from $N\times \mu_2$ to the trivial torsor $T^pC\times\mu_2.$
    Therefore, we see that $(N\times \mu_2,\nabla^{N\times \mu_2})$ indeed lies in $\stsp^o(SL_2)_{0'}.$
\end{proof}

\begin{lemma}
\label{lemma: 0-fiber G}
    If $p\geq h(G)$, then the fiber $\stsp^o(G)_{0'}$ is non-empty.
\end{lemma}
\begin{proof}
We follow the construction at the end of \cite[\S3.5]{chen-zhu}.
The Kostant section takes the origin $0$ of $\fc$ to a regular nilpotent element $\kappa(0)$ in $\fg.$
     By \cite[Prop. 2]{serre1996exemples}, there is a principal $\varphi: SL_2\to G$ such that $d\varphi(e) = \kappa(0)$ for $e = \begin{bmatrix} 0 & 1 \\ 0 & 0 \end{bmatrix} \in Lie(SL_2)$.
    We have that $\varphi$ restricts to a morphism $J_{SL_2,0}\to J_{G,0}$ of $k$-group schemes. Via twisting and Frobenius pullback, we have a morphism of $C$-group schemes with flat connections $\varphi^p: (J_{SL_2,0^p},\nabla^{can})\to (J_{G,0^p},\nabla^{can}).$
    Therefore, change of groups via $\varphi^p$ gives a morphism of stacks $\varphi^p_*: \stdR(J_{SL_2})_{0'}\to \stdR(J_{G})_{0'}.$ By definition of the tautological section $\tau$ and $\varphi,$ we have that $d\varphi: Lie(J_{SL_2,0})\to Lie(J_{G,0})$ sends $\tau_{SL_2}(0)$ to $\tau_G(0).$ Therefore, the change of groups $\varphi^p_*$ restricts to a morphism of stacks $\stsp(SL_2)_{0'} \to \stsp(G)_{0'}.$
    Since $\varphi^p_*$ takes trivial $J_{SL_2,0^p}$-torsors to trivial $J_{G,0^p}$-torsors, we have that $\varphi^p_*$ further restricts to $\stsp^o(SL_2)_{0'}\to \stsp^o(G)_{0'}.$ We can now conclude using \Cref{sl2}.
\end{proof}

\section{Semistable Non Abelian Hodge Theorem}
 In \S\ref{subsection: semistability and moduli spaces} we establish some technical lemmas about stability of Higgs bundles and the moduli spaces appearing in \Cref{thm: main thm intro}.
 In \S\ref{sec: ss}, we prove our main Semistable NAHT (\Cref{thm: main thm intro}). In \S\ref{sec: dt}, we establish the isomorphic decomposition theorems for the Hitchin and the de Rham-Hitchin morphisms (\Cref{thm: intro thm dt}). 
 
\subsection{Stability and moduli spaces} \label{subsection: semistability and moduli spaces} \;

We denote by $\stHig^{ss}(C',G,\omega_{C'}) \subset \stHig(C',G,\omega_{C'})$ and $\stdR^{ss}(C,G) \subset \stdR(C,G)$ the open substacks of semistable objects.

Recall that the group $\pi_1(G):=X_*(T)/X_{coroots}$ canonically parametrizes the connected components of the stack of $G$-bundles on $C$ \cite[Thm. 5.8]{hoffmann-connected-components}. Given a $G$-bundle $E$ lying in the component labelled by $d\in \pi_1(G)$, we call $d$ the degree of $E$.
From now on we sometimes add the decoration $d\in \pi_1(G)$ in the moduli stacks/spaces, indicating that we only take the part whose underlying $G$-bundle has degree $d$.

\begin{prop}[{\cite[Thm. 2.26]{hererro-zhang}}] \label{prop: hz moduli space}    Suppose that $p>2h(G)-2$.
    Then $\stHig^{ss}(C',G,\omega_{C'})$ and $\stdR^{ss}(C,G)$ admit adequate moduli spaces $\schHig(C')$ and $\schdR(C)$ respectively. 
     For any given degree $d \in \pi_1(G)$, the induced adequate moduli spaces $\schHig(C', d)$ and $ \schdR(C, d)$ are quasi-projective schemes. \qed
\end{prop} 

We will need the following technical lemma.
\begin{lemma} \label{lemma: locally reductive}
    Assume that $p>2h(G)-2$. Then the stack $\stHig^{ss}(C',G,\omega_{C'})$ is locally reductive as in \cite[Def. 2.5]{alper2018existence}.
\end{lemma}
\begin{proof}
    Since $\stHig^{ss}(C',G,\omega_{C'})$ admits an adequate moduli space which is a disjoint union of finite type schemes over $k$, it follows that every point specializes to a closed point. To conclude, we shall show that $\stHig^{ss}(C',G,\omega_{C'})$ admits a Zariski cover by quotient stacks of the form $[\Spec(A)/H]$ where $H$ is a reductive group. Let $\schHig(C') = \bigcup_i U_i$ be an affine open cover of the scheme $\schHig(C')$, and denote by $\mathcal{U}_i \subset \stHig^{ss}(C',G,\omega_{C'})$ the preimage of $U_i$. Then $\mathcal{U}_i \subset \stHig^{ss}(C',G,\omega_{C'})$ is an open substack that admits an affine adequate moduli space $U_i$, and therefore it is quasi-compact. We shall conclude by showing that $\mathcal{U}_i$ is of the form $[Spec(A)/H]$ for some reductive group $H$. For any given set $\{x_1, x_2, \ldots, x_n\}$ of distinct $k$-points of $C$, we will denote by $\stHig^{fr}(C',G, \omega_{C'}) \to \stHig(C',G, \omega_{C'})$ the stack of framed Higgs bundles, which parametrizes Higgs $G$-bundles $(E,\phi)$ along with the extra structure of a trivialization of the restriction $E|_{x_j}$ for each $x_j$. The group $\prod_{j=1}^n G$ acts naturally on $\stHig^{fr}(C',G, \omega_{C'})$ by changing the trivialization at each point $x_j$, and the affine morphism $\stHig^{fr}(C',G, \omega_{C'}) \to \stHig(C',G, \omega_{C'})$ exhibits $\stHig^{fr}(C',G, \omega_{C'})$ as a $\prod_{j=1}^n G$-torsor over $\stHig(C',G, \omega_{C'})$. We set $\mathcal{U}_i^{fr}:= \stHig^{fr}(C',G, \omega_{C'}) \times_{\stHig(C',G, \omega_{C'})} \mathcal{U}_i$. We have again an affine morphism $\mathcal{U}_i^{fr} \to \mathcal{U}_i$ which exhibits $\mathcal{U}_i^{fr}$ as a $\prod_{j=1}^nG$-torsor over $\mathcal{U}_i$. The proof of \cite[Prop. 5.4.1.3]{gaitsgory-lurie} (using the fact that the union of all closed points of the curve $C'$ is scheme-theoretically dense inside $C'$, and this remains true after base-changing to any $k$-scheme $S$) applied to the quasi-compact stack $\mathcal{U}_i$ implies that, after perhaps enlarging the number of points $\{x_1, x_2, \ldots, x_n\}$, we may assume that the objects in $\mathcal{U}_i^{fr}$ don't have any automorphisms. Therefore, $\mathcal{U}_i^{fr}$ is an algebraic space. The composition $\mathcal{U}^{fr}_i \to \mathcal{U}_i \to U_i$ of an affine morphism and an adequately affine morphism is adequately affine \cite[Prop. 4.2.1(1)]{alper_adequate}. By \cite[Thm. 4.3.1]{alper_adequate}, it follows that the morphism $\mathcal{U}^{fr}_i \to U_i$ is affine, and so in particular the algebraic space $\mathcal{U}^{fr}_i$ is an affine scheme $\Spec(A)$. Since $\Spec(A) = \mathcal{U}^{fr}_i \to \mathcal{U}_i$ is a $\prod_{j=1}^nG$-torsor, it follows that $\mathcal{U}_i = [\Spec(A)/\prod_{j=1}^nG]$, as desired.
\end{proof}

In this paper, we also need the notion of stability.
\begin{defn}[Stable Higgs bundles] \label{defn: stable higgs bundles}
    Let $(E, \phi)$ be a geometric point of $\stHig(C',G, \omega_{C'})$ defined over an algebraically closed field $K \supset k$. We say that $(E,\phi)$ is stable if for all strictly smaller parabolic subgroups $P\subsetneq G_K$, all $\phi$-compatible reductions of structure group $E_{P} \subset E$ (as in \cite[Def. 2.12]{hererro-zhang}), and all $P$-dominant characters $\chi$ (as in \cite[Def. 2.15]{hererro-zhang}), we have $\text{deg}(E_{P}(\chi)) <0$ for the degree of the associated line bundle $E_{P}(\chi)$.
\end{defn}

\begin{prop} \label{lemma: stable points are saturated}
    Suppose that $p>2h(G)-2$. Let $x$ be a stable geometric point of $\stHig^{ss}(C')$. Let $y$ denote its image in $\schHig(C')$. Then $x$ is a closed point of the fiber $\stHig^{ss}(C')_{y}$.
\end{prop}
\begin{proof}
    The point $y = \Spec(K)$ is defined over some algebraically closed over-field $K \supset k$. Suppose for the sake of contradiction that $x$ is not a closed point of $\stHig^{ss}(C', G,  \omega_{C'})_{y}$, so it specializes to a distinct closed $K$-point $z$ in $\stHig^{ss}(C',\omega_{C'})_{y}$. The fiber $\stHig^{ss}(C', G,  \omega_{C'})_{y}$ admits an adequate moduli space that is finite over $\Spec(K)$ \cite[Prop. 5.2.9(3) + Thm. 6.3.3]{alper_adequate}, and so every $K$-point specializes to a closed point. By base-changing the local quotient stack presentation from \Cref{lemma: locally reductive}, it follows that $\stHig^{ss}(C', G,  \omega_{C'})_{y}$ is locally reductive.
    By the Hilbert-Mumford criterion \cite[Lem. 3.24]{alper2018existence}, there is a morphism $f: \Theta_K := [\mathbb{A}^1_K/\mathbb{G}_m] \to \stHig^{ss}(C', G,  \omega_{C'})_{y}$ such that $f(0) \cong z$ and $f(1) \cong x$. By \cite[Prop. 4.7]{hererro-zhang}, the morphism $f$ corresponds to a $\phi$-compatible weighted parabolic reduction $(\lambda, E_{P_{\lambda}})$ of the point $x=(E, \phi)$. Since $z$ is distinct from $x$, the cocharacter $\lambda$ does not land in the center of $G_K$, and the associated parabolic subgroup $P_{\lambda} \subset G_K$ is strictly smaller than $G_K$. Choose a $P_{\lambda}$-dominant character $\chi$. Then, by stability of $x$, we have $\text{deg}(E_{P_{\lambda}}(\chi))<0$. 
    
    The point $z$ corresponds to the associated Levi bundle $(E_{L_{\lambda}}, \phi_{L_{\lambda}})$, and so it admits two canonical weighted parabolic reductions $(\lambda, E_{P_{\lambda}})$ and $(-\lambda, E_{P_{-\lambda}})$. Notice that $-\chi$ is $P_{-\lambda}$-dominant, and we have the following inequality contradicting the semistability of $z$: 
    \[\text{deg}(E_{P_{-\lambda}}(-\chi)) = \text{deg}(E_{L_{\lambda}}(-\chi)) = -\text{deg}(E_{L_{\lambda}}(\chi)) = -\text{deg}(E_{P_{\lambda}}(\chi))>0.\]
\end{proof}

\begin{coroll} \label{cor: stable substack is saturated}
    Suppose that $p> 2h(G)-2$. If $\mathcal{U} \subset \stHig^{ss}(C',G, \omega_{C'})$ is an open substack contained in the locus of stable geometric points, then $\mathcal{U}$ is saturated with respect to the moduli space morphism $\stHig^{ss}(C',G,\omega_{C'}) \to \schHig(C')$.
\end{coroll}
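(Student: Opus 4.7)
The plan is to show $\pi^{-1}(\pi(\mathcal{U})) = \mathcal{U}$, where $\pi: \stHig^{ss}(C',G, \omega_{C'}) \to \schHig(C')$ denotes the adequate moduli space morphism. The inclusion $\mathcal{U} \subseteq \pi^{-1}(\pi(\mathcal{U}))$ is tautological, so I would only need to prove the reverse inclusion at the level of geometric points. Fix a geometric point $z$ of $\pi^{-1}(\pi(\mathcal{U}))$, choose $x \in \mathcal{U}(K)$ with $\pi(z) = \pi(x) =: y$, and set $F := \stHig^{ss}(C',G, \omega_{C'})_{y}$; both $x$ and $z$ lie in $F$. Since $x$ is stable, \Cref{lemma: stable points are saturated} shows that $x$ is closed in $F$. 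The goal becomes showing that $z$ specializes to $x$, for then $z$ is a generization of $x \in \mathcal{U}$ and the openness of $\mathcal{U}$ forces $z \in \mathcal{U}$.

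To produce the specialization, I would argue as in the proof of \Cref{lemma: stable points are saturated}: by the same reasoning used for \Cref{lemma: locally reductive}, the fiber $F$ is locally reductive, and it is quasi-compact since it admits a finite type adequate moduli space over $\Spec(K)$ \cite[Prop. 5.2.9, Thm. 6.3.3]{alper_adequate}. The Hilbert-Mumford criterion \cite[Lem. 3.24]{alper2018existence} applied to $z$ then yields a $\Theta$-morphism $g: \Theta_K \to F$ with $g(1) \cong z$ and $g(0)$ a closed point of $F$. Uniqueness of the closed geometric point in a fiber of an adequate moduli space, together with the closedness of $x$ in $F$, forces $g(0) \cong x$, which provides the required specialization from $z$ to $x$.

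The main technical point to justify is the uniqueness of the closed geometric point in $F$. This is automatic in the good moduli space setting, and in the adequate case it can either be invoked from the structure results in \cite{alper_adequate}, or obtained concretely from the local quotient presentation $\mathcal{U}_i \cong [\Spec(A)/\prod_{j=1}^n G]$ constructed in the proof of \Cref{lemma: locally reductive}, where uniqueness of closed orbits in an affine GIT quotient by a reductive group is classical.
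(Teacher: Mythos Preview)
Your proposal is correct and follows essentially the same approach as the paper's proof: both use \Cref{lemma: stable points are saturated} to see that stable points are closed in the fiber, invoke uniqueness of the closed point in the fiber of an adequate moduli space, and then use openness of $\mathcal{U}$ to conclude. The only minor difference is that you produce the specialization $z \rightsquigarrow x$ explicitly via the Hilbert--Mumford criterion, whereas the paper cites \cite[Thm.~5.3.1(5)]{alper_adequate} directly for the fact that every point of the fiber specializes to the unique closed point; the paper also records the incidental observation that the fiber consists of a single geometric point, which you do not need.
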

\begin{proof}
    Choose a geometric point $y$ of $\schHig(C')$ such that $\mathcal{U}_y$ is nonempty. We need to show the equality of fibers $\mathcal{U}_y = \stHig^{ss}(C',G, \omega_{C'})_y$. Since $\stHig^{ss}(C',G, \omega_{C'})_y$ admits an adequate moduli space which is universally homeomorphic to the point $y$ (\cite[Prop. 5.2.9(3]{alper_adequate}), it follows that $\stHig^{ss}(C',G, \omega_{C'})_y$ has a unique closed $y$-point and all other $y$-points specialize to it \cite[Thm. 5.3.1(5]{alper_adequate}. Now the image of the open substack $\mathcal{U}_y \subset \stHig^{ss}(C',G, \omega_{C'})_y$ is nonempty, closed under generalization, and all of its $y$-points are closed by \Cref{lemma: stable points are saturated}. Therefore, we must have that $\stHig^{ss}(C',G, \omega_{C'})_y$ consists of a single geometric point and $\mathcal{U}_y = \stHig^{ss}(C',G, \omega_{C'})_y$.
\end{proof}

Next, we construct moduli spaces for $\stP^o(\omega_{C'})$ and $\stsp^o(G)$. We denote by $I_{\stP(\omega_{C'})} \to \stP(\omega_{C'})$ the inertia group stack, which is relatively affine over $\stP(\omega_{C'})$. The containment of $Z_G \times [\mathfrak{g}/G]$ inside the regular centralizer $J$ induces a natural inclusion $Z_G \times \stP(\omega_{C'}) \hookrightarrow I_{\stP(\omega_{C'})}$ of relatively affine group stacks over $\stP(\omega_{C'})$.

\begin{prop} \label{prop: group scheme moduli}
    The following statements hold:
    \begin{enumerate}[1.]
        \item The inclusion $Z_G \times \stP(\omega_{C'}) \hookrightarrow I_{\stP(\omega_{C'})}$ is an isomorphism.
        \item The stack $\stP^o(\omega_{C'})$ admits a good moduli space $\stP^o(\omega_{C'}) \to \schP^o(C')$, and $\schP^o(C')$ is a smooth commutative group algebraic space with geometrically connected fibers over $A(G,\omega_{C'})$.
        \item The stack $\stsp^o(G)$ admits a good moduli space $\stsp^o(G) \to \schsp^o(C)$, and $\schsp^o(C) \to A(G,\omega_{C'})$ is an \'etale torsor for the group algebraic space $\schP^o(C',\omega_{C'})$.
    \end{enumerate}
\end{prop}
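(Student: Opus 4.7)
The plan is to prove the three parts in order, using part (1) as the key input to parts (2) and (3).

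For (1), the inertia of $\stP^o(C',\omega_{C'})$ at a point $(b', F)$ (with $F$ a $J_{b'}$-torsor on $C'$) is identified with the global sections $H^0(C', J_{b'})$, and it is independent of $F$ since $J_{b'}$ is commutative. Using the cameral description $J_{b'} \cong (\pi_*(T \times \tilde{C}'))^W$ from \cite[Prop.~2.4.7]{ngo-lemme-fondamental} for $\pi\colon \tilde{C}' \to C'$ the cameral cover, one obtains $H^0(C', J_{b'}) \cong H^0(\tilde{C}', T)^W$. Over the open locus in $A(C',G,\omega_{C'})$ where the cameral cover is smooth and connected, this equals $T^W = Z_G$ (the latter under our standing hypothesis $p \nmid |W|$). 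To extend the identification over the entire base, I would use that both $Z_G \times \stP^o$ and $I_{\stP^o}$ are flat commutative group stacks over $\stP^o$ of the same relative dimension, so the closed immersion $Z_G \times \stP^o \hookrightarrow I_{\stP^o}$, being generically an isomorphism, is globally an isomorphism.

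For (2), part (1) realizes $\stP^o$ as a $Z_G$-gerbe over an algebraic space $\schP^o$ (its rigidification along $Z_G$). Since $Z_G$ is linearly reductive under our characteristic hypothesis, the rigidification morphism $\stP^o \to \schP^o$ is a good moduli space morphism in the sense of \cite{alper_adequate}. Smoothness of $\schP^o \to A(C',G,\omega_{C'})$ then follows by smooth descent from the smoothness of $\stP^o \to A(C',G,\omega_{C'})$ (\Cref{cztech}(1), restricted to the open substack $\stP^o$) and the smoothness of the gerbe morphism $\stP^o \to \schP^o$. The commutative group stack structure of $\stP^o$ descends to a commutative group algebraic space structure on $\schP^o$, and geometrically connected fibers follow from the defining property of the neutral component (\Cref{verygj}) together with the surjectivity of $\stP^o \to \schP^o$.

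For (3), by \Cref{surj} the stack $\stsp^o$ is a torsor under $\stP^o$, so its inertia pulls back from $\stP^o$ and is again $Z_G$. Applying the same rigidification argument as in (2) produces a good moduli space $\stsp^o \to \schsp^o$, and the $\stP^o$-action descends to a $\schP^o$-action on $\schsp^o$, realizing the latter as a $\schP^o$-torsor in the fppf topology. Since $\stsp^o \to A(C',G,\omega_{C'})$ is smooth and surjective (\Cref{pst} and \Cref{surj}), so is $\schsp^o \to A(C',G,\omega_{C'})$, and any smooth surjective morphism of algebraic spaces admits sections étale-locally on the target (\cite[\href{https://stacks.math.columbia.edu/tag/055U}{Tag 055U}]{stacks-project}). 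Therefore $\schsp^o \to A(C',G,\omega_{C'})$ is an étale $\schP^o$-torsor.

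The main obstacle I anticipate is in part (1): ensuring that the inertia does not jump on the non-generic locus of the Hitchin base, where the cameral curve may be singular or disconnected. The flatness plus dimension-comparison argument outlined above should handle this, but verifying that $I_{\stP^o}$ has the correct dimension at every point (rather than only generically) may require either a local analysis using the $(\pi_* T)^W$ description over the non-generic strata, or appealing to the more detailed study announced in the forthcoming joint work cited in the paper.
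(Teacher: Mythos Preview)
Your approach to parts (2) and (3) matches the paper's: rigidify by $Z_G$, use that $Z_G$ is of multiplicative type hence linearly reductive to get a good moduli space, and descend the group and torsor structures. (One small remark: linear reductivity of $Z_G$ holds in every characteristic since $Z_G\subset T$ is of multiplicative type; no hypothesis on $p$ is needed there.)

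The genuine gap is in part (1). Your claimed identity $T^W = Z_G$ is false in general: for $G=PGL_2$ one has $T^W=\mu_2$ while $Z_G$ is trivial, and $p\nmid |W|$ does not help. The source of the error is the cameral description you invoke. Ng\^o's \cite[Prop.~2.4.7]{ngo-lemme-fondamental} does \emph{not} identify $J_{b'}$ with $(\pi_*T)^W$; that is the larger group $J^1_{b'}$. The actual $J_{b'}$ is the open subgroup of $J^1_{b'}$ cut out by the conditions $\alpha(g(y))=1$ for every root $\alpha$ and every geometric point $y$ of the ramification locus $\widetilde{C}'_{\alpha}$. The paper's proof uses precisely these extra root constraints: it first shows $H^0(\widetilde{C}',\cO)=k$ (a nontrivial input from \cite{decomposition-higgs}, using $\deg(\omega_{C'})>0$) to get $\pi_*(J^1_{b'})=T^W$, and then shows that $\widetilde{C}'_{\alpha}\neq\emptyset$ for \emph{every} root $\alpha$ (again using $\deg(\omega_{C'})>0$), so that the root constraints cut $T^W$ down to $\bigcap_{\alpha\in\Phi}\ker(\alpha|_{T^W})=Z_G$.

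Your proposed ``flatness plus dimension'' extension from the generic locus would not repair this even if the target were correct: the possible discrepancy between $Z_G$ and the inertia is in the component group, not the dimension, so a dimension comparison cannot rule it out, and flatness of $I_{\stP^o}$ over $\stP^o$ is not available a priori. The paper avoids this entirely by computing $\pi_*(J_{b'})$ directly at every geometric point $b'$ and appealing to \cite[Lem.~5.2]{decataldo2022geometry} to conclude globally.
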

\begin{proof}
    Part (1) follows from \cite[Prop. 4.17(i)]{decomposition-higgs}. Part (2) is \cite[Prop. 4.17(ii)]{decomposition-higgs}. Part (3) follows directly from the fact that $\stsp^o(G)$ is an \'etale torsor for $\stP(\omega_{C'})$, by the compatibility of the formation of good moduli spaces with base-change \cite[Prop. 4.7(i]{alper-good-moduli}.
\end{proof}

\begin{lemma} \label{lemma: action preserves stability}
  The action of the group stack $\stP^o(\omega_{C'})$ preserves the locus of stable geometric points inside $\stHig(C',G,\omega_{C'})$.
\end{lemma}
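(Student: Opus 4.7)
The plan is a topological connectedness argument exploiting that for each geometric point $b$ of $A(C',G,\omega_{C'})$, the neutral component $\stP^o(C',\omega_{C'})_b$ is connected by \Cref{brochard} and \Cref{verygj}.

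Fix such a $b$ and a stable geometric point $(E,\phi)$ of $\stHig(C',G,\omega_{C'})_b$. I consider the orbit morphism
\[
    \sigma:\stP^o(C',\omega_{C'})_b \longrightarrow \stHig(C',G,\omega_{C'}),\qquad F\longmapsto F\cdot(E,\phi),
\]
associated with the action recalled in \eqref{dat}. The goal is to show, in two steps, that $\sigma$ factors through the open substack of stable points. First, since $\stHig^{ss}(C',G,\omega_{C'})\subset \stHig(C',G,\omega_{C'})$ is open, the preimage $\sigma^{-1}(\stHig^{ss}(C',G,\omega_{C'}))$ is open; simultaneously, its complement in $\stP^o(C',\omega_{C'})_b$ is the preimage of the closed unstable locus, and so is closed. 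This open and closed preimage contains the identity, whose image $(E,\phi)$ is semistable, so by connectedness it equals all of $\stP^o(C',\omega_{C'})_b$, and $\sigma$ factors through $\stHig^{ss}(C',G,\omega_{C'})$.

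The same reasoning applied one step further handles stability. The open inclusion $\stHig^{st}(C',G,\omega_{C'})\subset \stHig^{ss}(C',G,\omega_{C'})$ has closed complement (the strictly semistable locus), so the preimage $\sigma^{-1}(\stHig^{st}(C',G,\omega_{C'}))$ is again both open and closed in $\stP^o(C',\omega_{C'})_b$. Since the identity maps under $\sigma$ to the stable bundle $(E,\phi)$, connectedness forces this preimage to equal all of $\stP^o(C',\omega_{C'})_b$. Hence $F\cdot(E,\phi)$ is stable for every $F\in\stP^o(C',\omega_{C'})_b$, as desired.

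The only nontrivial input beyond these formal manipulations is the openness of $\stHig^{st}(C',G,\omega_{C'})$ inside $\stHig^{ss}(C',G,\omega_{C'})$. This should follow from the $\Theta$-stability framework of \cite[\S 2.2]{hererro-zhang}: weighted parabolic reductions correspond to morphisms from $\Theta$ via \cite[Prop. 4.7]{hererro-zhang}, and the strict inequalities $\text{deg}(E_{P}(\chi))<0$ that define stability are open conditions in families, making the stable locus open as needed.
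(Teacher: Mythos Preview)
Your connectedness argument is a genuinely different route from the paper's. The paper argues directly at the level of $\Theta$-filtrations: given a geometric point $x$ and a testing morphism $f:\Theta_k\to\stHig(C',G,\omega_{C'})$ with $f(1)\cong x$, the action of $\stP^o(C',\omega_{C'})_b$ carries $f$ to a testing morphism $g\cdot f$ for $g\cdot x$, and one checks that (i) $f$ is noncentral if and only if $g\cdot f$ is (via the $Z_G$-rigidification), and (ii) the weight of the determinant line bundle is unchanged, because it is a locally constant integer along the connected family $\stP^o(C',\omega_{C'})_b$. This establishes stability of $g\cdot x$ directly, with no appeal to openness of the stable locus.

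Your approach is cleaner in outline but depends entirely on the openness of $\stHig^{st}(C',G,\omega_{C'})$, and your justification for that is the genuine gap. The sentence ``the strict inequalities $\mathrm{deg}(E_P(\chi))<0$ \ldots\ are open conditions in families'' is not enough: stability is a conjunction over \emph{all} compatible parabolic reductions, an a priori infinite family of strict inequalities, and an infinite intersection of opens need not be open. The paper does prove openness (\Cref{lemma: openness of stability}), but the argument is substantially more work than you indicate: one shows the stable locus is closed under generalization (using $\Theta$-reductivity of $\stHig^{ss}$ to extend a destabilizing filtration from the generic to the special fiber) and separately that it is constructible (via a finite-type comparison with $\stHig^{ss}(C',P,\omega_{C'})$ over the finitely many conjugacy classes of parabolics). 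Note also that \Cref{lemma: openness of stability} is stated under the hypothesis (LH), whereas the paper's direct proof of \Cref{lemma: action preserves stability} needs no such assumption; so even after filling the gap, your argument yields a slightly weaker statement than the paper's.
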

\begin{proof}
    Let us first give another description of the notion of stability. Let $(E, \phi)$ be a $k$-point of $\stHig(C',G,\omega_{C'})$. By \cite[Prop. 4.7]{hererro-zhang}, $\phi$-compatible weighted parabolic reductions $(\lambda, E_{P_{\lambda}})$ (consisting of a cocharacter $\lambda: \mathbb{G}_m \to G$ and a $\phi$-compatible reduction of structure group to $P_{\lambda}$) are in natural correspondence with morphisms of stacks $f: \Theta_k := [\mathbb{A}^1_k/\mathbb{G}_m] \to \stHig(C',G,\omega_{C'})$ along with $f(1) \cong (E, \phi)$. We call a compatible weighted parabolic reduction $(\lambda, E_{P_{\lambda}})$ \textit{central} if the image of $\lambda$ is contained in the center $Z_G \subset G$, and otherwise we say that $(\lambda, E_{P_{\lambda}})$ is non-central. Similarly, under the natural correspondence, we obtain the notion of central and non-central morphisms $f: \Theta_k  \to \stHig(C',G,\omega_{C'})$. By definition, the parabolic subgroup $P_{\lambda}$ is strictly smaller than $G$ if $(\lambda, P_{\lambda})$ is central. The same considerations as in \cite[\S 4.1]{hererro-zhang} applied to \Cref{defn: stable higgs bundles} show that there is a fixed line bundle $\mathcal{L}:= \mathcal{D}(\mathfrak{g})$ on the stack $\stHig(C',G,\omega_{C'})$ such that a Higgs bundle $(E, \phi)$ is stable if and only if for all non-central $f:\Theta_k \to \stHig(C',G,\omega_{C'})$ with $f(1) \cong (E, \phi)$, the $\mathbb{G}_m$-weight of the fiber $f^*(\mathcal{L})|_0$ is strictly negative.

    Let us give a stacky interpretation of non-central $f: \Theta_k \to \stHig(C',G,\omega_{C'})$. Note that the inertia stack of $\stHig(C',G,\omega_{C'})$ contains a central copy of the constant relative group scheme $Z_G \times \stHig(C',G,\omega_{C'}) \to \stHig(C',G,\omega_{C'})$. Hence, we may rigidify in the sense of \cite[Def. 5.1.9]{acv-twisted-bundles-covers} to obtain a stack $\stHig(C',G,\omega_{C'}) \to \stHig(C',G,\omega_{C'})\hollowslash Z_G$. A morphism $f: \Theta_k \to \stHig(C',G,\omega_{C'})$ is central if and only if the composition 
    \[\Theta_k \xrightarrow{f} \stHig(C',G,\omega_{C'}) \to \stHig(C',G,\omega_{C'})\hollowslash Z_G\]
    factors through a point $\Spec(k) \to \stHig(C',G,\omega_{C'})\hollowslash Z_G$. 
        Taking $Z_G$-rigidifications for the action morphism $\stP^o(\omega_{C'}) \times_{A(G,\omega_{C'})} \stHig(C',G,\omega_{C'}) \to \stHig(C',G,\omega_{C'})$ induces an action of the moduli space $\schP^o(C)$ (see \Cref{prop: group scheme moduli}(2)) on $\stHig(C',G,\omega_{C'})\hollowslash Z_G$.

        Let $x$ and $g$ be geometric points of $\stHig(C',G,\omega_{C'})$ and $\stP^o(G,\omega_{C'})$ respectively, with the same image $b$ in $A(G,\omega_{C'})$. After base-change, we may assume without loss of generality that they are $k$-points. We want to show that $x$ is a stable if and only if $g \cdot x$ is stable. 
        The action of $\stP^o(G,\omega_{C'})$ induces an orbit morphism $\stP^o(G,\omega_{C'})_b \to \stHig(C',G,\omega_{C'})_b$ such that $x$ is the image of the identity and $g\cdot x$ is the image of $g$. 
        Any morphism $f: \Theta_k \to \stHig(C',G,\omega_{C'})$ with $f(1) \cong x$ necessarily factors through $\stHig(C',G,\omega_{C'})_b$, and the action induces a morphism $\widetilde{f}: \Theta_k \times \stP^o(G,\omega_{C'})_b \to \stHig(C',G,\omega_{C'})_b$ such that the restriction to $\Theta_k \times g$ satisfies $\widetilde{f}|_{\Theta_k \times g}(1) \cong g \cdot x$. 
        This establishes a bijection $f \mapsto g \cdot f$ between testing morphisms $\Theta_k \to \stHig(C',G,\omega_{C'})$ for $x$ and for $g\cdot x$. Note that $f$ is (non)central if and only if $g\cdot f$ is (non)central. 
        Since the $\mathbb{G}_m$-weight of the pullback of $\mathcal{L}$ under the restriction $\widetilde{f}|_0: 0\times \stP^o(G,\omega_{C'})_b \to \stHig(C',G,\omega_{C'})_b$ is locally constant and $\stP^o(G,\omega_{C'})_b$ is connected, the weight of $f^*(\mathcal{L})|_0$ agrees with that of $(g \cdot f)^*(\mathcal{L})|_0$. Hence, the stability condition for the weight on testing morphisms $\Theta_k \to \stHig(C',G,\omega_{C'})$ for $x$ is equivalent to the stability condition for $g \cdot x$, as desired.
 
\end{proof}

 \subsection{Semistable Nonabelian Hodge correspondence in positive characteristic}\label{sec: ss}\;

\begin{thm}
\label{thm: ss main}
Assume that $p\geq h(G)$. 
\begin{enumerate}
    \item The Chen-Zhu isomorphism \eqref{cziso} restricts to an isomorphism of $A(G,\omega_{C'})$-stacks:
    \begin{equation}
    \label{eqn: ss1}
        \fC^{ss}: \stsp^o(G)\times^{\stP^o(\omega_{C'})} \stHig^{ss}(C',G,\omega_{C'})\xrightarrow{\sim} \stdR^{ss}(C,G).
    \end{equation}
    \item For each degree $d\in \pi_1(G)$, the isomorphism \eqref{eqn: ss1} sends degree $d$ Higgs bundles to degree $pd$ connections.
    \item   
    If $p>2h(G)-2$, then for all $d \in \pi_1(G)$ the isomorphism \eqref{eqn: ss1} induces an isomorphism on the level of quasi-projective adequate moduli spaces:
    \begin{equation}
    \label{eqn: ss2}
        \mathbb{C}^{ss}: \schsp^o(C,G)\times^{\schP^o(C')} \schHig(C', d)\xrightarrow{\sim} \schdR(C, pd),
    \end{equation}
    where $\schP^o(C') \to A(G,\omega_{C'})$ is a smooth quasi-projective group scheme with geometrically connected fibers and $\schsp^o(C,G)$ is an \'etale torsor for $\schP^o(C')$.
\end{enumerate}
  
\end{thm}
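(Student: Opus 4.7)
The plan proceeds in three main stages. First, observe that since $\stsp^o$ is an open $\stP^o$-subtorsor of the $\stP$-torsor $\stsp$ (\Cref{surj}), the natural map $\stsp^o \times^{\stP^o} \stP \to \stsp$ is an isomorphism; substituting into the Chen-Zhu isomorphism \eqref{cziso} immediately yields an isomorphism $\fC: \stsp^o(C,G) \times^{\stP^o(C',\omega_{C'})} \stHig(C',G,\omega_{C'}) \xrightarrow{\sim} \stdR(C,G)$. To obtain the restricted isomorphism \eqref{eqn: ss1}, I first extend \Cref{lemma: action preserves stability} from stability to semistability. The same rigidification argument applies: semistability is cut out by the sign of the $\mathbb{G}_m$-weight of the pullback of the line bundle $\mathcal{L}$ along $\Theta$-maps $f: \Theta_k \to \stHig(C',G,\omega_{C'})$, and by the connectedness of the fibers of $\stP^o(C',\omega_{C'})$ this weight is locally constant along orbits, hence equal to its value at the identity.

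Second, I match semistable loci across $\fC$. For $(E_{sp}, \nabla_{sp}) \in \stsp^o(C,G)$ and $(E_G, \phi) \in \stHig(C',G,\omega_{C'})$, set $(E_{dR}, \nabla_{dR}) := \widetilde{\fC}((E_{sp}, \nabla_{sp}), (E_G, \phi))$. By \Cref{facj} and the construction of $\widetilde{\fC}$, the $p$-curvature $\Psi(\nabla_{dR})$ is identified, via the $a$-morphism $J_{b^p} \to Aut(E_{dR})$, with $Fr^*\phi$. Any $\nabla_{dR}$-compatible weighted parabolic reduction of $E_{dR}$ is automatically $\Psi(\nabla_{dR})$-compatible since $[\nabla_{dR}, \Psi(\nabla_{dR})]=0$; being $\nabla_{dR}$-horizontal, it descends via Cartier descent through the splitting datum $(E_{sp}, \nabla_{sp})$ to a $\phi$-compatible weighted parabolic reduction of $E_G$ on $C'$, and this assignment is a bijection. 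Since the Frobenius morphism $C \to C'$ multiplies degrees by $p$, the signs of all Hilbert-Mumford weights are preserved under this bijection, so $(E_{dR}, \nabla_{dR})$ is semistable iff $(E_G, \phi)$ is semistable, which gives \eqref{eqn: ss1}.

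Third, I descend to moduli spaces. The $\stP^o$-action preserves $\pi_1(G)$-degree by connectedness of its fibers, and $\widetilde{\fC}$ sends degree-$d$ Higgs data on $C'$ to flat connections of degree $pd$ on $C$: indeed, $Fr^*E_G$ has degree $pd$, and the contribution of a very-good splitting lies in the image of the neutral component $\stP^o$, hence vanishes in $\pi_1(G)$. Passing to adequate moduli via \Cref{prop: group scheme moduli} and functoriality of moduli spaces yields \eqref{eqn: ss2}; the source is quasi-projective because $\schsp^o$ is an \'etale $\schP^o$-torsor and $\schHig(C',d)$ is quasi-projective. For the quasi-projectivity of $\schP^o(C')$ itself, I apply the Kostant section $\eta_\kappa: A(C',G,\omega_{C'}) \to \stHig(C',G,\omega_{C'})$: by its stability (established in the Appendix) and \Cref{cor: stable substack is saturated}, the free $\stP^o$-orbit through $\eta_\kappa$ defines an open substack of $\stHig^{ss}(C',G,\omega_{C'},d_\kappa)$ for the Kostant degree $d_\kappa$; descending yields an open immersion $\schP^o(C') \hookrightarrow \schHig(C',d_\kappa)$ into a quasi-projective scheme.

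The main obstacle will be Step 2, establishing the bijection between $\nabla_{dR}$-compatible and $\phi$-compatible weighted parabolic reductions: this requires a careful analysis of how the tensor product $E_{dR} = E_{sp} \times^{J_{b^p}} Fr^*E_G$ interacts with reductions of structure group to a parabolic $P \subset G$ and with Cartier descent through the splitting, and of how the degree factor of $p$ transports HM weights.
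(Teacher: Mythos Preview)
Your overall architecture matches the paper's: rewrite the Chen--Zhu isomorphism with $\stP^o$ in place of $\stP$, show that the semistable loci correspond, verify the degree shift $d \mapsto pd$, descend to adequate moduli, and finally embed $\schP^o(C')$ openly into $\schHig(C',d_\kappa)$ via the stable Kostant section together with \Cref{cor: stable substack is saturated}. Steps 1 and 3 and the quasi-projectivity argument are essentially the paper's.

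The gap is in Step 2. You propose to match $\nabla_{dR}$-compatible and $\phi$-compatible weighted parabolic reductions by ``Cartier descent through the splitting datum $(E_{sp}, \nabla_{sp})$''. But $\nabla_{dR} = \nabla_{sp} \otimes \nabla^{can}$ does not have vanishing $p$-curvature (its $p$-curvature is $Fr^*\phi$), so ordinary Cartier descent does not apply to $\nabla_{dR}$-horizontal sections of $E_{dR}/P$. More seriously, even granting a bijection of $\Theta$-maps---which is automatic, since $\varphi: s \times \stHig(C')_{b'} \xrightarrow{\sim} \stdR(C)_{b'}$ is an isomorphism of stacks---your degree assertion ``Frobenius multiplies degrees by $p$'' ignores the twist by $E_{sp}$: the underlying $G$-bundle of $E_{dR}$ is $E_{sp} \times^{J_{b^p}} Fr^*E_G$, not $Fr^*E_G$, and there is no a priori reason the $E_{sp}$-twist preserves the numbers $\deg(E_{P}(\chi))$ entering the Hilbert--Mumford weight.

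The paper resolves this differently, and this is the key step you are missing. Fixing $s$ and a $\Theta$-map $f$ into $s \times \stHig(C')_{b'}$, one forgets connections and lands in $\Bun^o_{J_{b^p}}(C) \times \stHig(C')_{b'}$. The weight of a line bundle along a $\Theta$-map is a discrete invariant, and $\Bun^o_{J_{b^p}}(C)$ is connected; hence one may slide the first coordinate from $s$ to the \emph{trivial} $J_{b^p}$-torsor $0$ without changing either $wt(\mathcal{L}^{dR})$ or $wt(\mathcal{L}^{Dol})$. At the trivial torsor the induced map to $\Bun_G(C)$ is literally $Fr^*$, and then the explicit weight computation for the determinant line bundle under Frobenius pullback (passing through $\Bun_{\GL(\fg)}$) gives $wt(\mathcal{L}^{dR})(\widetilde{f}) = p \cdot wt(\mathcal{L}^{Dol})(f)$. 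This ``move to the trivial torsor via connectedness'' trick is exactly what absorbs the $E_{sp}$-twist and makes your intuitive factor of $p$ rigorous; the same trick also yields the degree shift $d \mapsto pd$ directly, without needing your separate argument that the very-good splitting contributes trivially to $\pi_1(G)$.
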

\begin{proof}
Let us first prove 1.
Let $s\in \stsp^o(K)$ be a geometric point of $\stsp^o$ defined over some algebraically closed field $K.$
We denote by $b'$ its image in $A(\omega_{C'})(K)$.
We need to show that the isomorphism $s\times \stHig(C')_{b'}\xrightarrow{\sim} \stdR(C)_{b'}$ preserves semistability.
We shall use the theory of $\Theta$-semistability.
Let $\Bun_G(C)$ be the moduli stack of $G$-bundles on $C.$
We denote by $\mathcal{L}'$ (resp. $\mathcal{L}$) the determinant line bundle on $\Bun_G(C')$ (resp. $\Bun_G(C)$) as defined in \cite[\S 1.F.a]{heinloth-hilbertmumford}.
Let $\mathcal{L}^{Dol}$ (resp. $\mathcal{L}^{dR}$) be the line bundle on $\stHig(C')$ (resp. $\stdR(C)$) obtained via pullback from the forgetful morphism $\stHig(C')\to \Bun_G(C')$ (resp. $\stdR(C)\to \Bun_G(C)$).
Let $\Theta_K$ be the quotient stack $\mathbb{A}^1_K/\bG_m.$ It has an open schematic point $1\cong \bG_m/\bG_m.$
Let $x'$ be a $K$-point of $\stHig(C')$. 
By \cite[Prop. 4.9]{hererro-zhang}, the point $x'$ lies in the semistable locus $\stHig^{ss}(C')$ if and only if for all morphisms $f:\Theta_{K}\to \stHig(C')$ with $f(1)\cong x',$ the weight $wt(\mathcal{L}^{Dol})(f)$ of the $\bG_m$-action
on the 0-fiber of the equivariant line bundle $f^*\mathcal{L}^{Dol}\in Pic^{\bG_m}(\mathbb{A}^1_{K})$ is non-negative.
The same characterization holds for semistable points of $\stdR^{ss}(C)$, by considering the weights of the line bundle $\mathcal{L}^{dR}$ instead.
Therefore \eqref{eqn: ss1} follows from \Cref{lemma: technical lemma 1 semistability} proven below.

We now prove 2.
Let $b'$ be any geometric point of $A(\omega_{C'})$, which after base-change we may assume to be defined over the ground field $k$.
Since $\stsp^o_{b'}$ is connected, for each $d\in \pi_1(G),$ there is a well-defined $x(d)\in \pi_1(G)$ such that the morphism $\stsp^o_{b'}\times\stHig(d)_{b'}\to \stdR(x(d))_{b'}$ is well-defined.
By forgetting the connections, this morphism gives rise to a morphism $\Bun_{J_{b^p}}^o(C)\times \stHig(C',d)_{b'}\to \Bun_G(C, x(d)).$
Let $e$ be the trivial torsor in $\Bun_{J_{b^p}}^o(C).$
Since $\Bun_{J_{b^p}}^o(C)$ is connected, to determine $x(d),$ it suffices to look at the image of $f_e: e\times \stHig(C',d)_{b'}\to \Bun_G(C).$
Given any object $(E,\phi)$ in $\stHig(C',d)_{b'},$ we have that $f_e(E,\phi)=Fr^*E.$
In view of how the isomorphism $\pi_1(G)=\pi_0(\Bun_G(C))$ is established in \cite[Thm. 5.8]{hoffmann-connected-components}, we have that $x(d)=pd,$ i.e, for any geometric point $s\in \stsp^o$, the isomorphism $s\times \stHig(C')_{b'}\xrightarrow{\sim}\stdR(C)_{b'}$ restricts to $s\times \stHig(C', d)_{b'}\xrightarrow{\sim}\stdR(C,pd)_{b'}.$

Finally, we prove 3.    
By the universal property of adequate moduli spaces and their compatibility with flat base-change (\cite[Thm. 3.12]{alper2023etale} and \cite[Prop. 5.2.9(1)]{alper_adequate}), it follows that (\ref{eqn: ss1}) induces the desired isomorphism at the level of adequate moduli spaces. It remains to show that the good moduli space $\schP^o(C',\omega_{C'})$ is quasi-projective. 
By \Cref{lemma: kostant stable}, we have that the Kostant section $\kappa$ factors through the stable locus inside $\stHig^{ss}(C').$ The action of $\stP(\omega_{C'})$ on the Kostant section defines an open embedding $\stP(\omega_{C'})\hookrightarrow \stHig^{ss}(C',d)$
for some $d\in \pi_1(G)$ \cite[\S4.3]{ngo-lemme-fondamental}. Furthermore, this open embedding lies on the stable locus by \Cref{lemma: action preserves stability}. By \Cref{cor: stable substack is saturated}, it follows that the open substack $\stP(\omega_{C'}) \subset \stHig^{ss}(C',d)$ is saturated with respect to the adequate moduli space morphism $\stHig^{ss}(C',d) \to \schHig(C',d)$. Hence, the moduli space $\schP^o(C')$ of $\stP(\omega_{C'})$ is open inside the quasi-projective scheme $\schHig(C', d)$ (\Cref{prop: hz moduli space}). Therefore, $\schP^o(C')$ is quasi-projective.
\end{proof}

\begin{lemma} \label{lemma: technical lemma 1 semistability}
Let $s\in \stsp^o(K)$ be a geometric point of $\stsp^o$ defined over some algebraically closed field $K.$ Let $f:\Theta_{K}\to s\times \stHig(C',G,\omega_{C'})_{b'}$ be any morphism.
    Let $\wt{f}: \Theta_{K} \to \stdR(C,G)_{b'}$ be the composition of $f$ and the isomorphism $\fC^{ss}_s: s\times \stHig(C',G,\omega_{C'})_{b'}\xrightarrow{\sim} \stdR(C,G)_{b'}.$ Then we have $
        wt(\mathcal{L}^{dR})(\wt{f})= p\cdot wt(\mathcal{L}^{Dol})(f)$.
\end{lemma}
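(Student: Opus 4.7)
The plan is to unpack the explicit formula for the Chen-Zhu isomorphism $\widetilde{\fC}$ from Section~\ref{gspsec} and reduce the claim to a statement about the behavior of the determinant line bundle under Frobenius pullback, composed with a (constant) twist by a fixed centralizer torsor. Concretely, if $s$ corresponds to the $J_{b^p}$-connection $(E,\nabla)$ and $f(t)=(s,(E'_t,\phi'_t))$, then the construction of $\widetilde{\fC}$ shows that the underlying $G$-bundle of $\widetilde{f}(t)$ is $E\times^{J_{b^p}, a}Fr^*E'_t$. Consequently, after composing with the forgetful map $\stdR(C,G)_{b'}\to\Bun_G(C)$, the resulting morphism $\widetilde{g}:\Theta_K\to\Bun_G(C)$ factors as
\[
\Theta_K\xrightarrow{g}\Bun_G(C')\xrightarrow{Fr^*}\Bun_G(C)\xrightarrow{T_E}\Bun_G(C),
\]
where $g$ comes from $f$ composed with projection to $\stHig(C',G,\omega_{C'})_{b'}$ and the forgetful map, and $T_E$ denotes the twisting operation by the fixed $J_{b^p}$-torsor $E$ through the centralizer inclusion $J_{b^p}\hookrightarrow\underline{\text{Aut}}_G$.

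Given this factorization, I would then establish two independent claims about the line bundle $\mathcal{L}$ on $\Bun_G(C)$: (a) the pullback $T_E^*\mathcal{L}$ differs from $\mathcal{L}$ only by tensoring with a line bundle pulled back from $\pi_0(\Bun_G(C))$, because $T_E$ is a fixed automorphism (independent of $t$) preserving each connected component; (b) the pullback $Fr^*\mathcal{L}$ on $\Bun_G(C')$ is isomorphic to $\mathcal{L}'^{\otimes p}$, again up to a line bundle from $\pi_0$. Both kinds of $\pi_0$-pullbacks contribute weight zero to any $\Theta_K$-morphism, since $\Theta_K$ is connected. For (b), the key input is that the relative Frobenius multiplies degrees of vector bundles by $p$; one chooses a representation (taking the adjoint representation for cleanest behavior) to express $\mathcal{L}$ as $\det R\pi_*(\mathcal{E}_\rho)^{\pm 1}$ and argues that Frobenius pullback of $\mathbb{G}_m$-equivariant graded pieces gives $\chi(Fr^*\text{gr}_iW)=p\chi(\text{gr}_iW)+(p-1)(g-1)\text{rk}(\text{gr}_iW)$, so that the weight of $Fr^*\mathcal{L}$ equals $p\cdot wt(\mathcal{L}')$ plus a correction depending only on the cocharacter-weighted ranks $\sum_i i\cdot\text{rk}(\text{gr}_i\mathcal{E}_\rho)$.

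Combining (a) and (b) gives $wt(\mathcal{L}^{dR})(\widetilde{f}) = wt(T_E^*Fr^*\mathcal{L})(g) = p\cdot wt(\mathcal{L}')(g) = p\cdot wt(\mathcal{L}^{Dol})(f)$, as desired. The hard part will be controlling the rank-correction term from the Riemann-Roch calculation in (b): for the adjoint representation, the self-duality of $\mathfrak{g}$ under any cocharacter $\lambda$ implies $\sum_i i\dim\mathfrak{g}_i=\text{tr}(\lambda|\mathfrak{g})=0$, causing the correction to vanish identically. For other natural normalizations of $\mathcal{L}$ in \cite{heinloth-hilbertmumford}, the correction term depends only on the connected component of $\Bun_G(C')$ hit by $g(0)$ and hence on a discrete invariant of $f$, which we can similarly absorb into a $\pi_0$-pulled-back line bundle contributing zero weight.
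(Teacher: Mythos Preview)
Your overall strategy---factor the underlying $G$-bundle of $\widetilde{f}$ as Frobenius pullback followed by a twist by the fixed $J_{b^p}$-torsor $E$, then handle each piece separately---is exactly the paper's approach. Your treatment of step (b), including the Riemann--Roch correction term and its vanishing via $\sum_i i\dim\fg_i=0$, is correct and in fact makes explicit what the paper packages into a citation of \cite[\S1.E.c]{heinloth-hilbertmumford}.

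There are, however, two genuine gaps in step (a). First, the twisting $T_E$ is \emph{not} a morphism $\Bun_G(C)\to\Bun_G(C)$: the homomorphism $a_{Fr^*(E'_t,\phi'_t)}:J_{b^p}\to\mathrm{Aut}(Fr^*E'_t)$ depends on the Higgs field $\phi'_t$, so your factorization through $\Bun_G(C)$ does not make sense as written. The paper avoids this by working instead with the map $\psi:\Bun_{J_{b^p}}^o(C)\times\stHig(C',G,\omega_{C'})_{b'}\to\Bun_G(C)$, keeping the Higgs data in the source.

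Second, and more seriously, your justification that $T_E^*\mathcal{L}$ differs from $\mathcal{L}$ only by a $\pi_0$-pullback is not valid: an automorphism preserving connected components can certainly change $\bG_m$-weights (think of inversion on $B\bG_m$). The correct argument---and this is precisely where the ``very good'' hypothesis on $s$ enters---is that $E$ lies in the \emph{connected} stack $\Bun_{J_{b^p}}^o(C)$, so one can interpolate between $T_E$ and the identity by spreading the $\Theta_K$-family over $\Bun_{J_{b^p}}^o(C)$ via the action. The weight of the pulled-back line bundle is then a locally constant, hence constant, function on this connected base. This is exactly the paper's argument: it acts by $\Bun_{J_{b^p}}^o(C)$ to move $f_s$ (first factor $E$) to $f_0$ (first factor trivial), invoking connectedness to equate the weights. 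Without this use of connectedness you have not actually established (a), and the lemma would fail for splittings outside $\stsp^o(C,G)$.
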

\begin{proof}
    Consider the following commutative diagram of $K$-stacks:
    \[
        \xymatrix{
        \Theta_K \ar[r]^-{f} \ar[dr]_-{f_s} & s\times \stHig(C',G,\omega_{C'})_{b'} \ar[r]^-{\fC^{ss}_s}_-{\sim} \ar[d] & \stdR(C)_{b'}\ar[d]\\
        & \Bun^o_{J_{b^p}}(C)\times \stHig(C',G,\omega_{C'})_{b'} \ar[r]_-{\psi} & \Bun_{G}(C),
        }
    \]
    where the vertical morphisms forget the connection, and $\psi$ is defined the same as $\fC^{ss}_s$ but we forget the connections, namely, given objects $E$ of $\Bun_{J_{b^p}}^o(C)$ and $(F,\phi)$ of $\stHig(C',G,\omega_{C'})_{b'}$, we define
$\psi(F, (E,\phi))$ to be $ F\times^{J_{b^p}}_{Fr^*a_{(E,\phi)}} Fr^*E$,
    where the morphism $a$ is as in \eqref{eqn: a}.

    The action of $\Bun_{J_{b^p}}^o(C)$ by left multiplication on the first factor of $\Bun_{J_{b^p}}^o(C)\times \stHig(C',G,\omega_{C'})_{b'}$ takes the $\Theta_K$-point $f_s$ to another $\Theta_K$-point $f_0$ of $\Bun_{J_{b^p}}^o(C)\times \stHig(C',G,\omega_{C'})_{b'}$ where the first factor gives the $\Theta$-family of trivial $J_{b^p}$-torsors on $C_{\Theta_K}.$
    Because $f_s$ and $f_0$ differ by an action of the connected Picard stack $\Bun_{J_{b^p}}^o(C)$ and the weight is a discrete invariant, we have the equalities
    $wt(\mathcal{L})(\psi\circ f_s)=wt(\mathcal{L})(\psi\circ f_0)$ and $
wt(\mathcal{L}^{Dol})(f_s) = wt(\mathcal{L}^{Dol})(f_0)$,
    where the $\mathcal{L}^{Dol}$ denotes to $\Bun_{J_{b^p}}^o(C)\times \stHig(C',G,\omega_{C'})_{b'}$ via the second projection.
        
    Consider the following commutative diagram of $K$-stacks:
    \[
        \xymatrix{
        \Theta_K \ar[r]^-{f_0} \ar[dr] \ar[ddr]_-{\wt{f_0}} & 0\times \stHig(C',G,\omega_{C'}) \ar[r]^-{\psi_0} \ar[d] & \Bun_G(C)\ar[d]^-{id}\\
        & \Bun_G(C') \ar[r]^-{Fr^*} \ar[d] & \Bun_G(C) \ar[d] \\
        & \Bun_{GL(\fg)}(C') \ar[r]^-{Fr^*} & \Bun_{GL(\fg)}(C),
        }
    \]
    where the 0 on the first row is the trivial $J_{b^p}$-torsor on $C;$
    $\psi_0$ is the restriction of $\psi,$ and on the level of $G$-bundles, $\psi_0$ is just given by Frobenius pullback; the bottom vertical arrows are given by taking adjoint bundles.

    Let $\mathcal{L}(C,GL(\fg))$ (resp. $\mathcal{L}(C',GL(\fg)))$ be the determinant line bundle on $Bun_{GL(\fg)}(C)$ (resp. $Bun_{GL(\fg)}(C)$) as defined in \cite[\S1.E.a]{heinloth-hilbertmumford}. By the explicit calculation of weight as in \cite[\S 1.E.c]{heinloth-hilbertmumford}, we have the numerical identity
    \[
        wt(\mathcal{L}(C,GL(\fg)))(Fr\circ \wt{f_0})=p\cdot wt(\mathcal{L}(C',GL(\fg)))(\wt{f_0}).
    \]
    Combining the numerical identities established above and the fact that the $\mathcal{L}$ on $\Bun_G(C)$ is just the pullback of $\mathcal{L}(GL(\fg))$ via taking the adjoint bundle, 
    we have the desired equality:
    \begin{align*}
        &wt(\mathcal{L}^{dR})(\wt{f})= wt(\mathcal{L})(\psi\circ f_s)
        = wt(\mathcal{L})(\psi\circ f_0) 
        = wt(\mathcal{L}(C,GL(\fg)))(Fr\circ \wt{f_0})  \\
        =& p\cdot wt(\mathcal{L}(C',GL(\fg)))(\wt{f_0})= p\cdot wt(\mathcal{L}^{Dol})(f_0)
        = p \cdot wt(\mathcal{L}^{Dol})(f_s)=p\cdot wt(\mathcal{L}^{Dol})(f).
    \end{align*}
\end{proof}

\begin{remark}
    A similar argument as in the proof of \Cref{thm: ss main} shows that the isomorphism \eqref{eqn: ss1} preserves the loci of stable points. This implies that the torsor $\schsp^o(C,G)$ is also a quasi-projective scheme, and that \eqref{eqn: ss2} restricts to an isomorphism $\mathbb{C}^{s}: \schsp^o(C,G)\times^{\schP^o(C')} \schHig^s(C', d)\xrightarrow{\sim} \schdR^s(C, pd)$ of moduli spaces of stable objects.
\end{remark}

 \subsection{Isomorphic Decomposition Theorems}\label{sec: dt}\;
 
In this subsection, we fix $d\in \pi_1(G)$ and consider the Hitchin $h_{Dol}:\schHig(C',d)\to A(G, \omega_{C'})$ and de Rham-Hitchin $h_{dR}:\schdR(C,pd)\to A(G, \omega_{C'})$ morphisms. If the characteristic $p>0$ of $k$ satisfies $p>2h(g)-2$, then these morphisms are proper by \cite[Thm. 5.20]{hererro-zhang}.

Choose a prime $\ell \neq p$. For any scheme $X$ of finite type over $k,$ let $D_c^b(X,\oql)$ be the bounded constructible derived category. All the pushforwards $f_*$ in this section are derived.

The following lemma was suggested to us by Sasha Petrov.

\begin{lemma}[Homotopy Lemma]
\label{lemma: homotopy}
    Let $f: X\to S$ be a morphism between two schemes of finite type over a field $k.$
Let $\pi: G\to S$ be a smooth group scheme with connected geometric fibers. 
Assume that there is an action of $G$ on $X$ relative to $S.$
Then the group of global sections $G(S)$ acts trivially on each cohomology sheaf $\mathcal{H}^i(f_*\oql).$
\end{lemma}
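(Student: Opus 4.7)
The plan is to reduce to a universal statement on $G$ and exploit the rigidity of automorphisms of constant $\oql$-sheaves on connected schemes. First I would form the morphism $\tilde a: G \times_S X \to G \times_S X$ over $G$ given on points by $(g, x) \mapsto (g, a(g, x))$, where $a: G \times_S X \to X$ denotes the action. Using the group structure on $G$, the morphism $\tilde a$ is an automorphism over $G$ (with inverse $(g, x) \mapsto (g, a(g^{-1}, x))$), and for any section $g \in G(S)$, pulling back $\tilde a$ along the graph $(g, \mathrm{id}_X): X \to G \times_S X$ recovers the induced automorphism $a_g: X \to X$.

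Next I would invoke smooth base change, valid because $\pi$ is smooth, applied to the Cartesian square
\[
\xymatrix{G \times_S X \ar[r]^-{p_X} \ar[d]_-{\pi_G} & X \ar[d]^-{f} \\ G \ar[r]_-{\pi} & S}
\]
to obtain a canonical isomorphism $\pi^*(f_*\oql) \cong (\pi_G)_*\oql$ in $D^b_c(G, \oql)$. Since $\tilde a$ is an automorphism over $G$, it induces an automorphism of $(\pi_G)_*\oql$, and hence, for each $i$, an automorphism $\Phi^i$ of the sheaf $\pi^*\mathcal{H}^i(f_*\oql)$ on $G$. The core of the argument will be showing that $\Phi^i = \mathrm{id}$.

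To see this, fix a geometric point $\bar s$ of $S$ and restrict $\Phi^i$ to the fiber $G_{\bar s}$. The restriction $\pi^*\mathcal{H}^i(f_*\oql)|_{G_{\bar s}}$ identifies with the constant sheaf of value $V_{\bar s} := \mathcal{H}^i(f_*\oql)_{\bar s}$, because all transition maps between stalks factor through the single point $\bar s$. On a connected scheme, an automorphism of a constant $\oql$-sheaf $\underline V$ corresponds to a single element of $\mathrm{Aut}(V)$, so $\Phi^i|_{G_{\bar s}}$ is determined by its value at any one point. At the identity $e(\bar s) \in G_{\bar s}$, the morphism $\tilde a$ restricts to $\mathrm{id}_{X_{\bar s}}$ (because $a(e, -) = \mathrm{id}$), so $\Phi^i|_{e(\bar s)} = \mathrm{id}$ and hence $\Phi^i|_{G_{\bar s}} = \mathrm{id}$ by connectedness. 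Varying $\bar s$, and using that morphisms of \'etale sheaves are detected on geometric stalks, we obtain $\Phi^i = \mathrm{id}$ on all of $G$. Finally, for any $g \in G(S)$, pulling back $\Phi^i$ along $g: S \to G$ yields precisely the action of $g$ on $\mathcal{H}^i(f_*\oql)$, which is therefore the identity.

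The hard part will be carefully justifying the two key ingredients: the smooth base change identification $\pi^*(f_*\oql) \cong (\pi_G)_*\oql$ in the derived category, and the rigidity statement that an automorphism of a constant $\oql$-sheaf on a connected scheme reduces to a single automorphism of the fiber (which ultimately rests on $H^0$ of a constant sheaf on a connected scheme equaling its value). Everything else is formal, driven by the connectedness of the geometric fibers of $\pi$ and the fact that $\tilde a$ restricts to the identity over the identity section.
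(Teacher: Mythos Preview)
Your argument is correct and follows essentially the same architecture as the paper's proof (which adapts \cite[Lem.~3.2.3]{laumon2008lemme}): form the universal action on $G\times_S X$, apply smooth base change to get an automorphism $\Phi^i$ of $\pi^*\mathcal H^i(f_*\oql)$, and use connectedness of the fibres together with triviality over the identity section to conclude. The only packaging difference is that the paper isolates the statement ``$\pi^*$ is fully faithful on constructible sheaves for $\pi$ smooth with connected geometric fibres'' as a separate lemma (the standard-$t$-structure analogue of \cite[Prop.~4.2.5]{bbdg}) and then descends $\Phi^i$ to $S$ before restricting along the unit section, whereas you verify $\Phi^i=\mathrm{id}$ directly by the stalkwise rigidity argument for constant sheaves on the connected fibres $G_{\bar s}$ --- which is exactly the heart of that full-faithfulness lemma specialized to the case needed here.
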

\begin{proof}
    If we replace $\mathcal{H}^i(f_*\oql)$ by the perverse cohomology sheaf $\pcs^i(f_*\oql),$ this lemma is \cite[Lem. 3.2.3]{laumon2008lemme}, which relies on the fact that the pullback by a smooth morphism with connected geometric fibers is fully faithful for perverse sheaves \cite[Prop. 4.2.5]{bbdg}. One proof of our current lemma is to note that the proof of \cite[Prop. 4.2.5]{bbdg} also works for the standard $t$-structure.
    An alternative proof is to use \cite[Lem 3.3.10]{behrend-thesis} to see that $G(S)$ acts trivially on $\mathcal{H}^i(f_*\mathbb{Z}/\ell^n)$, and then conclude by taking limits in $n$.
\end{proof}

  \begin{thm}\label{thm: dt} Suppose that $p>2h(G)-2$. Then:
     \begin{enumerate}[1]
         \item There is a canonical isomorphism of perverse cohomology sheaves in $D^b_c(A(G, \omega_{C'}),\oql):$
         \begin{equation*}
              \pcs^*(h_{Dol,*}\oql)\cong \pcs^*(h_{dR,*}\oql),\quad \pcs^*(h_{Dol,*}\IC)\cong \pcs^*(h_{dR,*}\IC).
         \end{equation*}
         \item We have a distinguished isomorphism in $D^b_c(A(G, \omega_{C'}),\oql):$
         \[
             h_{Dol,*}\IC\cong h_{dR,*}\IC.
         \]
         \item There is a distinguished isomorphism of intersection cohomology groups:
         \[
    I\!H^*(\schHig(C',d),\oql)\cong I\!H^*(\schdR(C,dp),\oql).
\]
Moreover, this isomorphism respects the perverse Leray filtrations induced by the Hitchin and the de Rham-Hitchin morphisms respectively.
\item We have canonical isomorphisms of cohomology sheaves:
    \[ \mathcal{H}^*(h_{Dol,*}\oql)\cong \mathcal{H}^*(h_{dR,*}\oql).\]
     \end{enumerate}
 \end{thm}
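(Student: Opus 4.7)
The plan is to use \Cref{thm: ss main} to build a common $\schP^o(C',\omega_{C'})$-torsor $Y$ over both $\schHig(C',d)$ and $\schdR(C,pd)$, match the pullbacks of $\IC$ (and $\oql$) along its two structural morphisms, and then descend along the smooth covering $\nu:\schsp^o(C,G)\to A(C',G,\omega_{C'})$.

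Concretely, set $Y:=\schsp^o(C,G)\times_{A(C',G,\omega_{C'})}\schHig(C',G,\omega_{C'},d)$ and consider the two projections $\pi_{Dol}:Y\to \schHig(C',d)$ (base change of $\nu$ along $h_{Dol}$) and $\pi_{dR}:Y\to \schdR(C,pd)$ (the action morphism $(s,h)\mapsto [(s,h)]$, which realizes $\schdR(C,pd)$ as the quotient $Y/\schP^o(C')$ via the diagonal action). By \Cref{thm: ss main} and \Cref{prop: group scheme moduli}, both are $\schP^o(C',\omega_{C'})$-torsors, hence smooth surjective of the same relative dimension $d_0:=\dim \schP^o(C')/A$ with geometrically connected fibers; moreover $h_{Dol}\circ \pi_{Dol}=h_{dR}\circ \pi_{dR}$. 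Since $h_{Dol}$ and $h_{dR}$ are proper by \cite[Thm.~5.20]{hererro-zhang}, proper base change along the two Cartesian squares over $\nu$, combined with the canonical identifications $\pi_{Dol}^*\IC[d_0]\cong \IC_Y\cong \pi_{dR}^*\IC[d_0]$ and $\pi_{Dol}^*\oql\cong \oql_Y\cong \pi_{dR}^*\oql$ (smooth pullback with connected fibers preserves constant and intersection complexes up to a common shift), yields canonical isomorphisms
\[
\nu^*h_{Dol,*}\IC\cong \nu^*h_{dR,*}\IC, \qquad \nu^*h_{Dol,*}\oql\cong \nu^*h_{dR,*}\oql
\]
in $D^b_c(\schsp^o(C,G),\oql)$.

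To descend these identifications, I would use that $\nu$ is smooth surjective with geometrically connected fibers (since $\schP^o(C')$ has such), so by \Cref{lemma: t-str} (resp.\ its perverse analog \cite[Prop.~4.2.5]{bbdg}) the functor $\nu^*$ (resp.\ $\nu^*[d_0]$) is fully faithful between the hearts of the standard (resp.\ perverse) $t$-structures, and is $t$-exact for the corresponding $t$-structure, hence commutes with $\mathcal H^i$ (resp.\ $\pcs^i$). Applying these cohomology functors to the displayed isomorphisms and using full faithfulness yields statements (1) and (4) directly. For (2), invoke the BBD decomposition theorem in positive characteristic for the proper morphisms $h_{Dol}$ and $h_{dR}$: each of $h_{Dol,*}\IC$ and $h_{dR,*}\IC$ decomposes as a direct sum of shifts of its perverse cohomology sheaves, and the isomorphisms of perverse cohomology sheaves from (1) assemble into the distinguished isomorphism $h_{Dol,*}\IC\cong h_{dR,*}\IC$ in $D^b_c(A(C',G,\omega_{C'}),\oql)$ asserted in (2). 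Statement (3) then follows by taking hypercohomology over $A(C',G,\omega_{C'})$.

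The main obstacle I anticipate is checking that the isomorphisms produced on the cover $Y$ are sufficiently canonical to descend through the full faithfulness of $\nu^*$ and $\nu^*[d_0]$: this is fine because both $\pi_{Dol}^*\IC$ and $\pi_{dR}^*\IC$ are canonically $\IC_Y$ on the irreducible smooth pullback $Y$ (and similarly $\pi_{Dol}^*\oql=\oql_Y=\pi_{dR}^*\oql$), so the induced identifications of (perverse) cohomology sheaves are intrinsic. A secondary concern is to confirm that the BBD decomposition theorem applies in our characteristic-$p$, $\ell$-adic setting for $\IC$ on the quasi-projective schemes $\schHig(C',d)$ and $\schdR(C,pd)$; this is standard once purity of $\IC$ and properness of $h_{Dol}$, $h_{dR}$ are in place.
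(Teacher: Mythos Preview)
Your argument is correct and rests on the same ingredients as the paper's proof---\Cref{thm: ss main}, the geometric connectedness of the fibers of $\schP^o(C')\to A(C',G,\omega_{C'})$, and full faithfulness of smooth pullback on the heart (\Cref{lemma: t-str} and \cite[Prop.~4.2.5]{bbdg})---but the packaging is different. The paper trivializes the torsor $\schsp^o$ \'etale-locally on the Hitchin base, obtains the isomorphisms of (perverse) cohomology sheaves locally, and then glues: the Homotopy Lemma (\Cref{lemma: homotopy}, resp.\ \cite[Lem.~3.2.3]{laumon2008lemme}) guarantees that two local trivializations differing by a section of $\schP^o$ induce the \emph{same} isomorphism, and \cite[Prop.~3.2.2]{bbdg} lets one glue morphisms in $D^b_c$ once the relevant negative Ext groups vanish. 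You instead work globally with the single smooth cover $\nu:\schsp^o\to A$, produce one isomorphism $\nu^*h_{Dol,*}\IC\cong\nu^*h_{dR,*}\IC$ via the canonical identification $\pi_{Dol}^*\IC[d_0]\cong\IC_Y\cong\pi_{dR}^*\IC[d_0]$, and then descend each (perverse) cohomology sheaf by full faithfulness. This is a cleaner route: it bypasses the Homotopy Lemma entirely (indeed, the paper deduces \Cref{lemma: homotopy} from \Cref{lemma: t-str}, the very statement you invoke directly), and it makes the canonicity of the isomorphism in (1) and (4) transparent, since everything is pinned down by the intrinsic identification of both pullbacks with $\IC_Y$ (resp.\ $\oql_Y$). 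The two Cartesian squares you use are genuine: the $\pi_{Dol}$-square is the definition of $Y$, and the $\pi_{dR}$-square is the standard fact that for a $\schP^o$-torsor $H\to A$ and $\schP^o$-space $X$, the map $(pr_1,act):H\times_A X\to H\times_A(H\times^{\schP^o}X)$ is an isomorphism. Your treatment of (2) and (3) via the decomposition theorem matches the paper's exactly.
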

 Note that     \Cref{thm: dt}(4) is new even in the $G=GL_N$ case.

 \begin{proof}
     For (1), the proofs of \cite[Thms. 5.1, 5.2]{dCGZ} carry verbatim in our setting. We recall the argument for the reader's benefit. Using the isomorphism (\ref{eqn: ss2}), we obtain the isomorphisms in (1) \'etale locally over $A(G, \omega_{C'}).$
     We then glue the local isomorphisms together using: 
     \begin{enumerate}
         \item[i] The Homotopy Lemma \cite[Lem. 3.2.3]{laumon2008lemme};
         \item[ii] The fact that $\stP^o(C')$ has geometrically connected fibers over $A(G, \omega_{C'});$
         \item[iii] \cite[Prop. 3.2.2]{bbdg}, which entails that we can glue morphisms between two objects $K$ and $L$ in $D^b_c$ if $Ext^{<0}(K,L)=0.$
     \end{enumerate}
     
     (2) and (3) then follow from (1) and the Decomposition Theorem for perverse sheaves.
     There are several distinguished choices for the isomorphisms in the Decomposition Theorem, but none are canonical, hence the change of words from canonical to distinguished in the statement.

     The proof of (4) is the same as (1) except that we replace (i) with \Cref{lemma: homotopy}.
 \end{proof}

\begin{remark}
    When $G\neq GL_r$, the moduli spaces $\schHig$ and $\schdR$ are typically singular, no matter of the degrees.
    Therefore, at least a priori, the intersection cohomologies of them can be different from the $\oql$-cohomologies. 
\end{remark}

\appendix
\begin{section}{Stability of the Kostant section}
\label{sec: appendix}

In this section, we give an algebraic proof of the stability of the Kostant section (in the sense of \Cref{defn: stable higgs bundles}) under mild characteristic assumptions. As far as we know, the only proof in this direction is \cite[\S5]{hitchin-teichmuller}, which shows the polystability of the Kostant section over $\bC,$ relying on gauge-theoretic methods. 

\begin{lemma}[Openness of stability] \label{lemma: openness of stability}
   If $p\geq h(G)$, then the locus of stable geometric points in $\stHig^{ss}(C',G,\omega_{C'})$ is open. 
\end{lemma}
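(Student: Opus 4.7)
The plan is to invoke the $\Theta$-stability formalism of \cite{hl-instability}, as already adapted to the setting of $G$-Higgs bundles in \cite{hererro-zhang}. By the same reasoning as in \cite[Prop. 4.9]{hererro-zhang} combined with \cite[Prop. 4.7]{hererro-zhang} (which matches $\Theta$-morphisms with $\phi$-compatible weighted parabolic reductions), a semistable geometric $K$-point $x = (E,\phi)$ of $\stHig^{ss}(C',G,\omega_{C'})$ fails to be stable in the sense of \Cref{defn: stable higgs bundles} precisely when there exists a morphism $f: \Theta_K \to \stHig(C',G,\omega_{C'})$ with $f(1) \cong x$, with $\bG_m$-weight $wt(\mathcal{L}^{Dol})(f) = 0$, and whose associated cocharacter $\lambda: \bG_{m,K} \to G_K$ is not central in $G_K$. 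Here $\mathcal{L}^{Dol}$ denotes the pullback of the determinant line bundle from $\Bun_G(C')$, as in the proof of \Cref{thm: ss main}.

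I would then consider the mapping stack $\mathcal{F} := \uMap(\Theta, \stHig^{ss}(C',G,\omega_{C'}))$, which is algebraic and locally of finite type by the general results of \cite{hl-instability}. The weight $wt(\mathcal{L}^{Dol})$ defines a locally constant function on $|\mathcal{F}|$, and the substack $\mathcal{F}^{cent}$ of morphisms factoring through the distinguished closed point $B\bG_m \hookrightarrow \Theta$ is a union of connected components of $\mathcal{F}$. Consequently, the locus $\mathcal{F}^{bad} := \mathcal{F}^{wt=0} \setminus \mathcal{F}^{cent}$ is an open and closed substack of $\mathcal{F}$, and the complement of the stable locus inside $\stHig^{ss}(C',G,\omega_{C'})$ is exactly the image of the evaluation morphism $\mathrm{ev}_1 : \mathcal{F}^{bad} \to \stHig^{ss}(C',G,\omega_{C'})$.

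The main obstacle is to show that $\mathrm{ev}_1(\mathcal{F}^{bad})$ is closed. To this end, I would work component-by-component, restricting to a fixed degree $d \in \pi_1(G)$ so that $\stHig^{ss}(C', G, \omega_{C'}, d)$ is of finite type (it admits a quasi-projective moduli space by the results recalled above \Cref{lemma: locally reductive}). Using this boundedness of the semistable locus together with the integrality of the degrees $\deg(E_{P_\lambda}(\chi))$, I would argue that only finitely many $G$-conjugacy classes of non-central cocharacters $\lambda$ can arise as the cocharacter associated to a $K$-point of $\mathcal{F}^{bad}$ lying above $\stHig^{ss}(C', G, \omega_{C'}, d)$. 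For each such $\lambda$, the corresponding stratum of $\mathcal{F}^{bad}$ is representable over $\stHig^{ss}(C', G, \omega_{C'}, d)$ by a proper scheme, namely a closed subscheme of the partial flag bundle of $\phi$-compatible reductions to $P_\lambda$ cut out by the numerical condition of having weight $0$. Properness of $\mathrm{ev}_1$ on each stratum forces its image to be closed, and taking the finite union yields that $\mathrm{ev}_1(\mathcal{F}^{bad})$ is closed inside $\stHig^{ss}(C',G,\omega_{C'}, d)$, giving the desired openness of the stable locus.
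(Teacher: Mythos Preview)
Your strategy of using the $\Theta$-stack formalism is close in spirit to the paper's, but there are two genuine gaps.

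First, your boundedness claim is misstated: the set of $G$-conjugacy classes of noncentral cocharacters $\lambda$ is infinite (it is essentially $X_*(T)/W$), so ``only finitely many $\lambda$ arise'' cannot be the right assertion. What is true is that there are only finitely many conjugacy classes of associated parabolics $P_\lambda$, but even fixing $P$ you still need to bound the degrees of the $P$-reductions. The paper handles this differently: rather than bounding strata of the mapping stack, it observes that the non-stable locus is the image of $\bigsqcup_{P \subsetneq G} \stHig^{ss}(C',P,\omega_{C'}) \to \stHig^{ss}(C',G,\omega_{C'})$ and proves directly that each $\psi_*$ is of finite type (reducing via an affine square to the analogous statement for $\Bun_P \to \Bun_G$, which is known). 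Chevalley's theorem then gives constructibility.

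Second, and more seriously, your properness claim is unjustified. Calling the relevant object ``a closed subscheme of the partial flag bundle'' is misleading: the space of $\phi$-compatible $P$-reductions of $(E,\phi)$ is a scheme of sections of $E/P \to C'$, not a flag variety itself, and it is not a priori proper over the base. Even granting quasi-compactness, the valuative criterion for your $\mathrm{ev}_1$ amounts to the following: given a family of semistable Higgs bundles over a DVR with a weight-$0$ filtration on the generic fibre, extend the filtration across the special fibre while staying inside the semistable locus. This is exactly the statement of $\Theta$-reductivity for $\stHig^{ss}(C',G,\omega_{C'})$, which is not automatic. The paper supplies precisely this missing ingredient: it first uses \cite[Lem.~6.15]{alper2018existence} to ensure the generic filtration lands in the semistable locus, and then invokes $\Theta$-reductivity, which holds because $\stHig^{ss}(C',G,\omega_{C'})$ is locally reductive (\Cref{lemma: locally reductive}) with an adequate moduli space, via \cite[Thm.~5.4]{alper2018existence}. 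With that in hand, the paper concludes by the standard ``constructible $+$ stable under generalization $\Rightarrow$ open'' argument, rather than proving the complement is closed directly.
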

\begin{proof}
    We begin by showing that the locus of stable geometric points is closed under generalization. Let $R$ be a discrete valuation ring over $k$, and choose a morphism $j: \Spec(R) \to \stHig^{ss}(C',G,\omega_{C'})$. We want to show that if the image $j(\eta)$ of the generic point $\eta \in \Spec(R)$ is not stable, then the same holds for the image $j(s)$ of the special point $s \in \Spec(R)$. For this, we use the interpretation of stability in terms of morphisms $\Theta \to \stHig(C',G,\omega_{C'})$ explained in the proof of \Cref{lemma: action preserves stability}. Since $j(\eta)$ is not stable, after passing to a finite extension of $R$, we may assume that there exists a morphism $f: \Theta_{\eta} \to \stHig(C',G,\omega_{C'})$ with an isomorphism $f(1) \cong j(\eta)$ such that $wt(f^*(\cL)|_0)  \leq 0$. As explained in the proof of \Cref{thm: ss main}, the semistability of $j(\eta)$ implies that this weight is nonnegative, and therefore $wt(f^*(\cL)|_0)  = 0$. By \cite[Lem. 6.15]{alper2018existence}, it follows that the morphism $f: \Theta_{\eta} \to \stHig(C',\omega_{C'})$ factors through $\stHig^{ss}(C',G,\omega_{C'}) \subset \stHig(C',G,\omega_{C'})$. 
    Since $\stHig^{ss}(C',G,\omega_{C'})$ is locally reductive (\Cref{lemma: locally reductive}) and has an adequate moduli space, it follows that the stack $\stHig^{ss}(C',G,\omega_{C'})$ is $\Theta$-reductive by \cite[Thm. 5.4]{alper2018existence}. This means that we can extend $f: \Theta_{\eta} \to \stHig^{ss}(C',G,\omega_{C'})$ to a morphism $\widetilde{f}: \Theta_R \to \stHig^{ss}(C',G,\omega_{C'})$ such that we have $\widetilde{f}|_{\Theta_s}(1) = j(s)$ at the special fiber. By local constancy of the weight of a line bundle, we also have $wt((\widetilde{f}|_{\Theta_s})^*(\cL)|_0) =0$, thus showing that $j(s)$ is not stable.

    Constructibility follows from a standard argument. A semistable geometric point $(E,\phi)$ defined over some field extension $K \supset k$ is unstable if and only if it admits a non-central compatible weighted parabolic reduction $(\lambda, E_P)$ such that the corresponding morphism $f: \Theta_K \to \stHig(C',G,\omega_{C'})$ satisfies $wt(f^*(\cL)|_0) =0$. Again by \cite[Lem. 6.15]{alper2018existence}, this is equivalent to $f(0)$ being a semistable point in $\stHig^{ss}(C',G,\omega_{C'})$. Recall that $f(0)$ corresponds to the $G$-Higgs bundle associated to the $P$-Higgs bundle $(E_{P}, \phi_P)$ via the homomorphism $\psi: P \twoheadrightarrow L_{\lambda} \hookrightarrow G$. So a semistable geometric point is not stable if and only if it comes from a $P$-Higgs bundle for some parabolic subgroup $P \subsetneq G$ such that the associated Levi bundle (viewed as a $G$-Higgs bundle via a choice of the Levi subgroup splitting) is also semistable. To summarize this discussion, consider the open substack $\stHig^{ss}(C', P, \omega_{C'}) \subset \stHig(C', P, \omega_{C'})$ defined by the fiber product
    $\stHig(P)\times_{\stHig(G)}\stHig^{ss}$.
    If we take the union $\bigsqcup_{P} \stHig^{ss}(C', P, \omega_{C'})$ as we run over the finitely many conjugacy classes of parabolic subgroups $P \subsetneq G$ with some choice of Levi splitting, then the locus of geometric points in $\stHig^{ss}(C', G, \omega_{C'})$ that are not stable is exactly the image of $\bigsqcup_{P} \stHig^{ss}(C', P, \omega_{C'}) \to \stHig^{ss}(C', G, \omega_{C'})$. To conclude constructibility, by Chevalley's theorem it suffices to show that $\psi_*: \stHig^{ss}(C', P, \omega_{C'}) \to \stHig^{ss}(C', G, \omega_{C'})$ is of finite type. This is true because $\psi_*: \stHig(C', P, \omega_{C'}) \to \stHig(C', G, \omega_{C'})$ fits into the commutative diagram
    \[
        \xymatrix{
        \stHig(C', P, \omega_{C'}) \ar[d]^{\psi_*} \ar[r] & \Bun_{P}(C')  \ar[d]^{\psi_*} \\ \stHig(C', G, \omega_{C'}) \ar[r]
        & \Bun_{G}(C')
        },
    \]
    where the horizontal arrows are affine and of finite type, and the left vertical arrow is of finite type by \cite[Prop. 2.3 + Prop. 2.4(iii)]{herrero2020quasicompactness} (note that the statement of \cite[Prop. 2.4]{herrero2020quasicompactness} assumes that the characteristic of $k$ is zero, but this is only used in the proof to ensure that the unipotent radical $U$ is an extension of vector space groups where the source group $P$ acts linearly, which is satisfied in our case of a parabolic subgroup $P$ under the assumption $p\geq h(G)$ by the existence of an equivariant exponential map as in \cite[Prop. 5.3]{seitz-unipotent}).
\end{proof}

\begin{prop}
\label{lemma: kostant stable}
  If $p=0$ or $p\geq h(G)$, then the Kostant section $\kappa: A(G, \omega_{C})\to\stHig(C,G,\omega_{C})$ lands in the locus of stable geometric points inside $\stHig^{ss}(C,G, \omega_{C}).$
\end{prop}
\begin{proof}
  Because the Kostant section is $\bG_m$-equivariant, the stable locus is $\bG_m$-stable by its definition, and the stable locus is open by \Cref{lemma: openness of stability}, it suffices to show that $\kappa$ sends the origin of the Hitchin base $0_A$ to the stable locus. Let $(E,\phi) \in \stHig(C,G, \omega_{C})$ be the image $\kappa(0)$ of zero under the Kostant section. There is a description of $(E,\phi)$ as follows (see \cite[\S7.1]{dalakov2017lectures}). Since $p\geq h(G)$ or $p=0$, the regular nilpotent image $\kappa(0)$ of the Kostant section in $\fg$ determines a principal $SL_2$-group $\psi: SL_2 \to G$ \cite[Prop. 2]{serre1996exemples}. If we denote by $(E_{SL_2}, \phi_{SL_2})$ the zero Kostant section for $SL_2$, then we have $(E,\phi) = (\psi_*(E_{SL_2}), \psi_*(\phi))$. 
  
  Consider the quotient $q: G \to \overline{G}= G/Z_G$. Then the composition $q \circ \psi: SL_2 \to G \to \overline{G}$ yields a principal $SL_2$-group for $\overline{G}$. Hence, we may choose Kostant section for $\overline{G}$ so that $(q_*(E), q_*(\phi))$ is the zero Kostant section. By \cite[Prop. 2.23(b)]{hererro-zhang}, which establishes a correspondence between compatible parabolic reductions of $(E, \phi)$ and $\left(q_*(E), q_*(\phi)\right)$, we are reduced to the case when $G = \overline{G}$ is adjoint. 
  There is a decomposition $G = \prod_i G_i$, where $G_i$ are simple reductive groups. Each composition $q_i \circ \psi: SL_2 \to G \to G_i$ yields a principal $SL_2$-group for $G_i$, and hence $\left((q_i)_*(E), (q_i)_*(\phi)\right)$ is a zero Kostant section for $G_i$. Since stability can be checked on each factor, we are reduced to the case when $G$ is a simple reductive group; we shall impose this hypothesis for the rest of this proof. We consider two cases:

  \noindent $\bullet$ \underline{Case 1: $G$ is not of type $D_{2n}$.}  Consider the adjoint Higgs bundle $(ad(E),ad(\phi))$. As explained in \cite[\S7.1]{dalakov2017lectures}, there is a decomposition $ad(E)\cong \bigoplus_{m=-(h(G)-1)}^{h(G)-1} \fg_m\otimes \omega_{C}^{\otimes m/2},$ where $\fg=\bigoplus_{m=-(h(G)-1)}^{h(G)-1}\fg_m$ is the grading by weight of $\fg$ as an $SL_2$-representation via the associated Jacobson-Morozov triple $\psi: SL_2 \to G$. The Higgs field $ad(\phi)$ acts on $\bigoplus_{m=-(h(G)-1)}^{h(G)-1} \fg_m\otimes \omega_{C}^{\otimes m/2}$ as an $\omega_{C}$-twisted version of the lowering operator corresponding to the lowering nilpotent element in $\mathfrak{sl}_2$. 
  Since $p\geq h(G)$ or $p=0$, we may decompose $\fg$ as a direct sum $\fg = \bigoplus_l V^l$ of irreducible $SL_2$-representations \cite[Thm 6, pg. 25]{serre20031998}. It is proven in \cite{kostant-3d} that the number of such irreducible representations equals the rank $r$ of the simple Lie algebra $\fg$, and the highest weights of the summands $V^l$ are determined by the exponents of the Lie algebra as in \cite[Chpt.6, \S4.5-\S4.13]{bourbaki-lie}.
  For every simple group except type $D_{2n}$, there are $r$ distinct exponents. It follows that all the irreducible representations arising in the decomposition $\fg = \bigoplus_l V^l$ are pairwise non-isomorphic, and so the decomposition is canonical. This induces a decomposition of the Higgs bundle $ad(E) = \bigoplus_{l} ad(E)^l$ with $ad(E)^l:= \bigoplus_{-M_l \leq m \leq M_l} V^l_m \otimes \omega_{C}^{\otimes m/2}$, where each weight space $V^l_m$ has dimension $1$. The lowering operator $ad(\phi)$ preserves each summand $ad(E)^l$. For each tuple of number $n_l$ with $-M_l \leq n_l \leq M_l$, there is a subbundle preserved by $ad(\phi)$
    \[ ad(E)^l_{n_l} := \bigoplus_{-M_l \leq m \leq n_l} V^l_m \otimes \omega_{C}^{\otimes m/2} \subset ad(E)^l\]
    Furthermore, our description above implies that every $ad(\phi)$-preserved subbundle of $ad(E)$ is of the form $\bigoplus_l ad(E)^l_{n_l}$ for some tuple of integers $n_l$. Using $\text{deg}(\omega_{C})>0$ (by our assumption that $g(C)\geq 2$) and the description of $ad(\phi)$ invariant subbundles, it follows that each direct summand $ad(E)^l$ is stable as a vector Higgs bundle (recall that a vector bundle with a Higgs field $(E,\phi)$ is stable if for any $\phi$-invariant nontrivial proper subbundle $F$ of $E,$ we have the slope inequality $\mu(F)<\mu(E)$). In particular, $(ad(E), ad(\phi))$ is a semistable Higgs bundle.

    Now let us study stability of the original $(E,\phi)$. Choose a weighted parabolic reduction $(\lambda, E_P)$ compatible with $\phi$ (as in \cite[\S4.1]{hererro-zhang}). This induces a $\mathbb{Z}$-weighted filtration $(ad(E)_{i})_{i \in \mathbb{Z}}$ by subbundles $ad(E)_i \subset ad(E)$ that are preserved by the Higgs field $ad(\phi)$. Indeed, we set $ad(E)_i := E_{P}(\fg_{\lambda \geq i})$, where $\fg_{\lambda \geq i} \subset \fg$ is the $P$-subrepresentation where $\lambda$ acts with weight at least $i$. 
    The interpretation of stability in the proof of \Cref{lemma: action preserves stability} and the computation of the weight of the determinant line bundle in \cite[\S1.F.c]{heinloth-hilbertmumford} jointly imply that $(E, \phi)$ is stable if and only if whenever $\lambda$ is not central, we have
    \[ \sum_{i} i \cdot \text{deg}(ad(E)_i/ad(E)_{i+1}) = \sum_{i} i \cdot \left[\text{deg}(ad(E)_i) - \text{deg}(ad(E)_{i+1})\right] <0.\]
    Using the additivity of degree, summation by parts, and the fact that $\text{deg}(E) =0$, we get
    \[\sum_{i} i \cdot \left[\text{deg}(ad(E)_i) - \text{deg}(ad(E)_{i+1})\right] = \sum_{i} \text{deg}(ad(E)_i)\]
    By semistability of $(ad(E), ad(\phi))$, it follows that $\text{deg}(ad(E)_i) \leq 0$ for all $i$. Therefore, in order to show the inequality $\sum_{i} \text{deg}(ad(E)_i)<0$ it suffices to prove that there exists some $i$ such that $\text{deg}(ad(E)_i)<0$. We shall show indeed that $\text{deg}(ad(E)_1) <0$. Suppose for the sake of contradiction that $\text{deg}(ad(E)_1)=0$. By our description of subbundles preserved by $ad(\phi)$, this implies that $ad(E)_1$ must be a direct sum of some of the stable Higgs bundle summands $ad(E)_1 = \bigoplus_j ad(E)^{l_j}$ coming from the decomposition $\mathfrak{g} = \bigoplus_l V^l$.
    
    Since we assume $p\nmid |W|$ throughout the paper, there exists a choice of nondegenerate symmetric bilinear pairing $b: \mathfrak{g} \otimes \mathfrak{g} \to k$, as $G$-representations (where $G$ acts trivially on $k$) \cite[Lem. 4.2.3]{riche-kostant-section}. This is also a symmetric nondegenerate pairing of $SL_2$-representations. By using Schur's lemma and self-duality of $SL_2$-representations, it follows that the canonical decomposition into isotypic $SL_2$-components $\mathfrak{g} = V^l$ satisfies that each restriction $b: V^l \otimes V^l \to k$ remains nondegenerate.

    After twisting by the $G$-bundle $E$, the pairing $b$ induces a nondegenerate symmetric pairing $b: ad(E) \otimes ad(E) \to \cO_{C}$. Furthermore, it restricts to a nondegenerate pairing $ad(E)^l \otimes ad(E)^l \to \cO_{C}$ for each $l$. Since $ad(E)_1 = \bigoplus_j ad(E)^{l_j}$, it follows that the restriction $ad(E)_1 \otimes ad(E)_1 \to \cO_{C}$ is nondegenerate. By the construction of the filtration, we have that $ad(E)_1 = E_{P}(\mathfrak{g}_{\lambda \geq 1})$. Since the cocharacter $\lambda$ is not in the center, it follows that $\fg_{\lambda \geq 1} \neq 0$, which means that $ad(E)_1 \neq 0$. On the other hand, it follows from $\lambda$-weight considerations that the restriction pairing $b:\fg_{\lambda \geq 1}\otimes \fg_{\lambda \geq 1} \to k$ of $P$-representations is identically $0$. By twisting by the $P$-bundle $E_{P}$, we conclude that the restriction of the pairing $ad(E)_1 \otimes ad(E)_1 \to \cO_{C}$ is identically zero, a contradiction.

     \noindent$\bullet$ \underline{Case 2: $G$ is of type $D_{2n}$.} We shall modify slightly the argument in Case 1. We keep the same notation as in the previous case. 
     
     For type $D_{2n}$, one of the exponents of the Lie algebra (namely, $2n-1$) is repeated twice. Therefore, the decomposition of $\fg$ into isotypic $SL_2$-components is of the form $\fg = \left(\bigoplus_{l \neq \sigma} V^l \right) \oplus (V^{\sigma})^{\oplus 2}$, where the $V^l$ are distinct irreducible representations. The irreducible representation $V^{\sigma}$ has dimension $4n-1$. Schur's lemma shows that the bilinear pairing $b: \fg \otimes \fg \to k$ remains nondegenerate when restricted to each $V^l$ for $l \neq \sigma$. It also remains nondegenerate when restricted to the isotypic $SL_2$-component $(V^{\sigma})^{\oplus 2}$. It follows that any nonzero $b$-isotropic $SL_2$-subrepresentation of $\fg$ must be isomorphic to $V^{\sigma}$, and so it has dimension $4n-1$.

     Choose a $\phi$-compatible weighted parabolic reduction $(\lambda,E_P)$. Just as in the argument in Case 1, it suffices to show that $\mathrm{deg}(ad(E)_1)<0$. Assume for the sake of contradiction that $\mathrm{deg}(ad(E)_1)=0$. The same considerations as in Case 1 show that the subbundle $ad(E)_1 \subset ad(E)$ is of the form $E_{{SL_2}}(V) \subset E_{SL_2}(\fg) = ad(E)$ for some $SL_2$-subrepresentation $V \subset \fg$ 
     (indeed, $ad(E)_1$ is still a combination of stable Higgs bundle direct summands of $(ad(E), ad(\phi))$ coming from irreducible $SL_2$-subrepresentations of $\fg$). Furthermore, $ad(E)_1$ is again nonzero and $b$-isotropic, and hence it must come from a nonzero $b$-isotropic $SL_2$-subrepresentation $V \subset \fg$. This means that its rank must be $4n-1$. On the other hand, the rank of $ad(E)_1 = E_P(\fg_{\lambda \geq 1})$ agrees with the dimension of $\fg_{\lambda \geq 1}$, which is the Lie algebra of the unipotent radical of the parabolic subgroup $P$. One may compute the dimensions of unipotent radicals of maximal parabolic subgroups in type $D_{2n}$ by considering all possible ways of removing a node from the Dynkin diagram. This way it can be checked that all parabolic subgroups $P \subsetneq G$ have unipotent radicals of dimension larger than $4n-1$, a contradiction.
\end{proof}

\end{section}

\bibliographystyle{plain}
\footnotesize{\bibliography{coh_G_bundles.bib}}

\end{document}